\documentclass[11pt]{article}
\usepackage[width=15cm,height=21cm]{geometry}
\usepackage[colorlinks,allcolors=blue]{hyperref}
\usepackage{comment}

\usepackage{cite}
\newcommand{\myurl}[1]{\href{#1}{#1}}
\newcommand{\journaltitle}[1]{#1}
\newcommand{\booktitle}[1]{#1}

\newcommand{\volumeyearpages}[3]{\textbf{#1}, #3 (#2)}
\newcommand{\volumeissueyearpages}[4]{\textbf{#1}:#2, #4 (#3)}

\usepackage{amsmath,amssymb}
\usepackage{mathtools}
\newcommand{\eqdef}{\coloneqq}

\newcommand{\bC}{\mathbb{C}}
\newcommand{\bD}{\mathbb{D}}
\newcommand{\bZ}{\mathbb{Z}}
\newcommand{\bN}{\mathbb{N}}
\newcommand{\bNz}{\mathbb{N}_0}
\newcommand{\bR}{\mathbb{R}}
\newcommand{\bT}{\mathbb{T}}
\newcommand{\cA}{\mathcal{A}}
\newcommand{\cB}{\mathcal{B}}

\newcommand{\cP}{\mathcal{P}}
\newcommand{\cR}{\mathcal{R}}

\newcommand{\cU}{\mathcal{U}}

\newcommand{\fM}{\mathfrak{M}}

\newcommand{\econstant}{\operatorname{e}}
\newcommand{\imagunit}{\operatorname{i}}
\newcommand{\bcoef}[3]{\widetilde{c}^{(#1)}_{#2,#3}}
\newcommand{\basic}[3]{b^{(#1)}_{#2,#3}}
\newcommand{\al}{\alpha}

\newcommand{\Be}{\operatorname{B}}
\newcommand{\be}{\beta}
\newcommand{\Ga}{\Gamma}
\newcommand{\ga}{\gamma}
\newcommand{\de}{\delta}

\newcommand{\dif}{\mathrm{d}}

\newcommand{\la}{\lambda}

\newcommand{\Om}{\Omega}
\newcommand{\Mat}{\mathcal{M}}

\newcommand{\medstrut}{\vphantom{0_{0_0}^{0^0}}}
\newcommand{\clos}{\operatorname{clos}}
\newcommand{\lin}{\operatorname{span}}

\newcommand{\conjw}{\overline{w}}
\newcommand{\conjz}{\overline{z}}
\newcommand{\Berezin}{\operatorname{Ber}}
\newcommand{\Radialization}{\operatorname{Rad}}
\newcommand{\radialization}{\operatorname{rad}}

\newcommand{\jac}{\mathcal{J}}
\newcommand{\jaccoef}[3]{c^{(#1,#2)}_{#3}}
\newcommand{\sign}{\operatorname{sgn}}

\newcommand{\FreqSubspace}{\mathcal{W}}

\usepackage{colortbl}
\usepackage{xcolor}
\colorlet{lightblue}{blue!20}

\usepackage{amsthm}
\newtheorem{theorem}{Theorem}[section]
\newtheorem{proposition}[theorem]{Proposition}
\newtheorem{lemma}[theorem]{Lemma}
\newtheorem{corollary}[theorem]{Corollary}
\theoremstyle{definition}
\newtheorem{example}[theorem]{Example}
\newtheorem{definition}[theorem]{Definition}
\newtheorem{remark}[theorem]{Remark}

\author{Roberto Mois\'{e}s Barrera-Castel\'{a}n,
Egor A. Maximenko,\\[1ex]
Gerardo Ramos-Vazquez}
\title{Radial operators on polyanalytic\\weighted Bergman spaces}

\begin{document}%
\maketitle

\begin{abstract}
Let $\mu_\alpha$ be the Lebesgue plane measure on the unit disk with the radial weight $\frac{\alpha+1}{\pi}(1-|z|^2)^\alpha$. Denote by $\mathcal{A}^{2}_{n}$ the space of the $n$-analytic functions on the unit disk, square-integrable with respect to $\mu_\alpha$. Extending the results of Ramazanov (1999, 2002), we explain that disk polynomials (studied by Koornwinder in 1975 and W\"{u}nsche in 2005) form an orthonormal basis of $\mathcal{A}^{2}_{n}$. Using this basis, we provide the Fourier decomposition of $\mathcal{A}^{2}_{n}$ into the orthogonal sum of the subspaces associated with different frequencies. This leads to the decomposition of the von Neumann algebra of radial operators, acting in $\mathcal{A}^{2}_n$, into the direct sum of some matrix algebras. In other words, all radial operators are represented as matrix sequences. In particular, we represent in this form the Toeplitz operators with bounded radial symbols, acting in $\mathcal{A}^{2}_n$. Moreover, using ideas by Engli\v{s} (1996), we show that the set of all Toeplitz operators with bounded generating symbols is not weakly dense in $\mathcal{B}(\mathcal{A}^{2}_n)$.

\medskip\noindent
AMS Subject Classification (2010):
Primary 22D25; Secondary 30H20, 47B35, 33C45.

\medskip\noindent
Keywords: radial operator, 
polyanalytic function,
weighted Bergman space,
mean value property,
reproducing kernel,
von Neumann algebra,
Jacobi polynomial, disk polynomial.
\end{abstract}

\section{Introduction}

\subsection*{Background}

Polyanalytic functions naturally arise in some physical models (plane elasticity theory, Landau levels) and in some methods of signal processing,
see~\cite{AbreuFeichtinger2014,HaimiHedenmalm2013,AliBagarelloGazeau2015,Abreu2010,Hutnik2008,Hutnik2010}.
Koshelev~\cite{Koshelev1977} computed the reproducing kernel of the $n$-analytic Bergman space $\cA_n^2(\bD)$ on the unit disk.
Balk in the book~\cite{Balk1991} explained
fundamental properties of polyanalytic functions.
Dzhuraev~\cite{Dzhuraev1992}
related polyanalytic projections
with singular integral operators.
Vasilevski~\cite{Vasilevski1999,Vasilevski2000}
studied polyanalytic Bergman spaces
on the upper halfplane
and polyanalytic Fock spaces
using the Fourier transform.
Ramazanov~\cite{Ramazanov1999,Ramazanov2002}
constructed an orthonormal basis in $\cA_n^2(\bD)$ and studied various properties of $\cA_n^2(\bD)$.
In fact, the elements of this basis are well-known disk polynomials
studied by Koornwinder~\cite{Koornwinder1975},
W\"{u}nsche~\cite{Wunsche2005},
and other authors.
Pessoa~\cite{Pessoa2014} related Koshelev's formula with Jacobi polynomials and gave a very clear proof of this formula.
He also obtained similar results for some other one-dimensional domains.
Hachadi and Youssfi~\cite{HachadiYoussfi2019}
developed a general scheme for computing
the reproducing kernels of the spaces of polyanalytic functions on radial plane domains
(disks or the whole plane) with radial measures.

There are general investigations
about bounded linear operators
in reproducing kernel Hilbert spaces (RKHS),
especially about Toeplitz operators
in Bergman or Fock spaces \cite{Zhu2007book, BauerFulsche2020,Xia2015}, 
but the complete description of the spectral properties is found
only for some special classes of operators,
in particular, for Toeplitz operators
with generating symbols
invariant under some group actions,
see Vasilevski~\cite{Vasilevski2008book},
Grudsky, Quiroga-Barranco, and Vasilevski~\cite{GrudskyQuirogaVasilevski2006},
Dawson, \'{O}lafsson, and Quiroga-Barranco~\cite{DawsonOlafssonQuiroga2015}.
The simplest class of this type consists of Toeplitz operators
with bounded radial generating symbols.
Various properties of these operators
(boundedness, compactness, and eigenvalues)
have been studied by many authors,
see~\cite{KorenblumZhu1995,GrudskyVasilevski2002,Zorboska2003,Quiroga2016}.
The C*-algebra generated by such operators,
acting in the Bergman space,
was explicitly described in \cite{Suarez2008,GrudskyMaximenkoVasilevski2013,BauerHerreraVasilevski2014,HerreraVasilevskiMaximenko2015}.
Loaiza and Lozano \cite{LoaizaLozano2014}
obtained similar results for
radial Toeplitz operators in harmonic Bergman spaces.
Maximenko and Teller\'{i}a-Romero~\cite{MaximenkoTelleriaRomero2020} studied radial operators in the polyanalytic Fock space.

Hutn\'{i}k, Hutn\'{i}kov\'{a}, Ram\'{i}rez-Mora, Ram\'{i}rez-Ortega, S\'{a}nchez-Nungaray, Loaiza, and other authors
\cite{HutnikHutnikova2015,RamirezSanchez2015,LoaizaRamirez2017,HutnikMaximenkoMiskova2016,RamirezRamirezSanchez2019,SanchezGonzalezLopezArroyo2018}
studied vertical and angular Toeplitz operators in polyanalytic and true-polyanalytic Bergman spaces.
In particular, vertical Toeplitz operators in the
$n$-analytic Bergman space over the upper half-plane
are represented in \cite{RamirezSanchez2015}
as $n\times n$ matrices
whose entries are continuous functions on $(0,+\infty)$,
with some additional properties at $0$ and $+\infty$.

Rozenblum and Vasilevski
\cite{RozenblumVasilevski2019}
investigated Toeplitz operators with distributional symbols
and showed that Toeplitz operators
in true-polyanalytic spaces Bergman or Fock spaces are equivalent to some Toeplitz operators with distributional symbols in the analytic Bergman or Fock spaces.

\subsection*{Objects of study}

Denote by $\mu$ the Lebesgue plane measure
and its restriction to the unit disk $\bD$,
and by $\mu_\alpha$ the weighted Lebesgue plane measure
\[
\dif\mu_\al(z)\eqdef
\frac{\al+1}{\pi}\,(1-|z|^2)^\al\,\dif\mu(z).
\]
This measure is normalized:
$\mu_\al(\bD)=1$.
We use notation $\langle\cdot,\cdot\rangle$
and $\|\cdot\|$ for the inner product
and the norm in $L^2(\bD,\mu_\al)$.

Let $\cA_n^2(\bD,\mu_\al)$ be the space
of $n$-analytic functions square integrable
with respect to $\mu_\al$.
We denote by $\cA_{(n)}^2(\bD,\mu_\al)$
the orthogonal complement of $\cA_{n-1}^2(\bD,\mu_\al)$
in $\cA_n^2(\bD,\mu_\al)$.

For every $\tau$ in the unit circle
$\bT\eqdef\{z\in\bC\colon\ |z|=1\}$,
let $\rho^{(\al)}_n(\tau)$ be the rotation
operator acting in $\cA_n^2(\bD,\mu_\al)$ by the rule
\[
(\rho^{(\al)}_n(\tau)f)(z)
\eqdef f(\tau^{-1}z).
\]
The family $\rho^{(\al)}_n$
is a unitary representation of the group $\bT$
in the space $\cA_n^2(\bD,\mu_\al)$.
We denote by $\cR^{(\al)}_n$ its commutant,
i.e., the von Neumann algebra that consists
of all bounded linear operators
acting in $\cA_n^2(\bD,\mu_\al)$
that commute with $\rho^{(\al)}_n(\tau)$
for every $\tau$ in $\bT$.
In other words, the elements of $\cR^{(\al)}_n$ are the operators
intertwining the representation $\rho^{(\al)}_n$.
The elements of $\cR^{(\al)}_n$
are called \emph{radial} operators in $\cA_n^2(\bD,\mu_\al)$.

In a similar manner, we denote by $\rho^{(\al)}_{(n)}(\tau)$
the rotation operators acting in $\cA_{(n)}^2(\bD,\mu_\al)$
and by $\cR^{(\al)}_{(n)}$
the von Neumann algebra
of radial operators in $\cA_{(n)}^2(\bD,\mu_\al)$.
We also consider the rotation operators
$\rho^{(\al)}(\tau)$ in $L^2(\bD,\mu_\al)$
and the corresponding algebra $\cR^{(\al)}$
of radial operators.

\subsection*{Structure and results of this paper}

\begin{itemize}
\item In Section~\ref{sec:Jacobi_polynomials},
we list some necessary facts about Jacobi polynomials.
They play a crucial role in Sections~\ref{sec:basis}
and \ref{sec:weighted_mean_value}.

\item In Section~\ref{sec:basis},
we recall various equivalent formulas for the disk polynomials that can be obtained by orthogonalizing the monomials in $z$ and $\conjz$.
Using this orthonormal basis $(\basic{\al}{p}{q})_{p,q\in\bNz}$
we descompose $L^2(\bD,\mu_\al)$ into the orthogonal sum of subspaces $\FreqSubspace_\xi^{(\al)}$ corresponding to different frequences $\xi$, $\xi\in\bZ$.

\item In Section~\ref{sec:weighted_mean_value} we give an elementary proof of the weighted mean value property of polyanalytic functions and show the boundedness of the evaluation functionals for the spaces of polyanalytic functions over general domains in $\bC$.
In the unweighted case, this mean value property was proven
by Koshelev~\cite{Koshelev1977} and Pessoa~\cite{Pessoa2014}.
In the weighted case, it was found by
Hachadi and Youssfi~\cite{HachadiYoussfi2019}
and used by them to compute the reproducing kernel of $\cA_n^2(\bD,\mu_\al)$.

\item In Section~\ref{sec:spaces},
extending results by Ramazanov~\cite{Ramazanov1999,Ramazanov2002}
to the weighted case, we verify that the family
$(\basic{\al}{p}{q})_{p\ge 0,0\le q<n}$
is an orthonormal basis of $\cA_n^2(\bD,\mu_\al)$.
Using this fact, we decompose $\cA_n^2(\bD,\mu_\al)$
into subspaces
$\FreqSubspace_\xi^{(\al)}
\cap \cA_n^2(\bD,\mu_\al)$.

\item In Section~\ref{sec:not_weakly_dense} we prove that the set of all Toeplitz operators with bounded generating symbols
is not weakly dense in $\cB(\cA_n^2(\bD,\mu_\al))$.
This simple result was surprising for us.

\item In Section~\ref{sec:radial}
we decompose the von Neumann algebras
$\cR^{(\al)}$, $\cR^{(\al)}_n$, and $\cR^{(\al)}_{(n)}$,
into direct sums of factors.
In particular, Theorems~\ref{thm:radial_polyanalytic_Bergman}
and \ref{thm:radial_true_polyanalytic_Bergman}
imply that the algebra $\cR^{(\al)}_n$
is noncommutative for $n\ge2$,
whereas $\cR^{(\al)}_{(n)}$ is commutative
for every $n$ in $\bN$.

\item In Section~\ref{sec:Toeplitz},
we find explicit representations
of the radial Toeplitz operators
acting in the spaces $\cA_n^2(\bD,\mu_\al)$
and $\cA_{(n)}^2(\bD,\mu_\al)$.
The results of Sections~\ref{sec:radial}
and~\ref{sec:Toeplitz} are similar
to~\cite{MaximenkoTelleriaRomero2020}.
The main difference is that
the orthonormal bases are given by other formulas.
\end{itemize}
Most of the facts in the Sections~\ref{sec:Jacobi_polynomials}--\ref{sec:spaces} are not new. We recall them in a self-contained form, for reader's convenience.

We hope that this paper can serve as a basis for further investigations about polyanalytic or polyharmonic Bergman spaces and operators acting on these spaces.
For example, an interesting task is to describe the C*-algebra generated by radial Toeplitz operators acting in $\cA_n^2(\bD,\mu_\al)$.

\section{Necessary facts about Jacobi polynomials}
\label{sec:Jacobi_polynomials}

In this section, we recall some necessary facts about Jacobi polynomials.
Most of them are explained in
\cite[Chapter~4]{Szego1975}.
For every $\al$ and $\be$ in $\bR$,
the Jacobi polynomials can be defined by Rodrigues formula:
\begin{equation}\label{eq:Jacobi_Rodrigues}
P_n^{(\al,\be)}(x)
\eqdef
\frac{(-1)^n}{2^n\,n!}\,(1-x)^{-\al}(1+x)^{-\be}
\frac{\dif^n}{\dif{}x^n}
\Bigl((1-x)^{n+\al} (1+x)^{n+\be} \Bigr).
\end{equation}
This definition and the general Leibniz rule imply an expansion
into powers of $x-1$ and $x+1$:
\begin{equation}\label{eq:Jacobi_explicit}
P_n^{(\al,\be)}(x)
=\sum_{k=0}^n \binom{n+\al}{n-k}\,\binom{n+\be}{k}
\left(\frac{x-1}{2}\right)^k \left(\frac{x+1}{2}\right)^{n-k}.
\end{equation}
Formula~\eqref{eq:Jacobi_explicit}
yields a symmetry relation,
the values at the points $1$ and $-1$,
and a formula for the derivative:
\begin{equation}\label{eq:Jacobi_symmetry}
P_n^{(\al,\be)}(-x)=(-1)^n P_n^{(\be,\al)}(x),
\end{equation}
\begin{equation}\label{eq:Jacobi_extreme_values}
P_n^{(\al,\be)}(1)=\binom{n+\al}{n},\qquad
P_n^{(\al,\be)}(-1)=(-1)^n\binom{n+\be}{n},
\end{equation}
\begin{equation}\label{eq:Jacobi_derivative}
\bigl(P_n^{(\al,\be)}\bigr)'(x)
=\frac{\al+\be+n+1}{2}\,P_{n-1}^{(\al+1,\be+1)}(x).
\end{equation}
With the above properties, it is easy to compute
the derivatives of $P_n^{(\al,\be)}$ at the point $1$.
Now Taylor's formula yields another explicit expansion for $P_n^{(\al,\be)}$:
\begin{equation}\label{eq:Jacobi_explicit2}
P_n^{(\al,\be)}(x)
=\sum_{k=0}^n \binom{\al+\be+n+k}{k}\binom{\al+n}{n-k}
\left(\frac{x-1}{2}\right)^k.
\end{equation}
If $\al>-1$ and $\be>-1$,
then \eqref{eq:Jacobi_explicit2} can be rewritten as
\begin{equation}\label{eq:Jacobi_explicit3}
P_n^{(\al,\be)}(x)
=\frac{\Ga(\al+n+1)}{n!\,\Gamma (\al+\be+n+1)}
\sum_{k=0}^n \binom{n}{k} \frac{\Ga(\al+\be+n+k+1)}{\Ga(\al+k+1)}
\left(\frac{x-1}{2}\right)^k.
\end{equation}
For $\al>-1$ and $\be>-1$,
we equip $(-1,1)$ with the weight $(1-x)^\al(1+x)^\be$,
then denote by $\langle\cdot,\cdot\rangle_{(-1,1),\al,\be}$
the corresponding inner product:
\[
\langle f,g\rangle_{(-1,1),\al,\be}
\eqdef\int_{-1}^1 f(x)\overline{g(x)}\,(1-x)^\al(1+x)^\be\,\dif{}x.
\]
Then $L^2((-1,1),(1-x)^\al(1+x)^\be)$ is a Hilbert space,
and the set $\cP$ of the univariate polynomials
is its dense subset.
Using~\eqref{eq:Jacobi_Rodrigues}
and integrating by parts, for every $f$ in $\cP$ we get
\begin{equation}\label{eq:inner_product_with_Jacobi}
\langle f,P_n^{(\al,\be)}\rangle_{(-1,1),\al,\be}
=\frac{1}{2n}
\langle f',P_{n-1}^{(\al+1,\be+1)}\rangle_{(-1,1),\al+1,\be+1}.
\end{equation}
Applying~\eqref{eq:inner_product_with_Jacobi} and induction,
it is easy to prove that the sequence
$(P_n^{(\al,\be)})_{n=0}^\infty$
is an orthogonal basis of $L^2((-1,1),(1-x)^\al(1+x)^\be)$,
that is, for every polynomial $h$ of degree less than $n$,
\begin{equation}\label{eq:Jacobi_ortogonality}
    \int_{-1}^1 h(x) P_n^{(\al,\be)}(x)(1-x)^\al (1+x)^\be\dif{}x = 0.
\end{equation}
Furthermore,
\begin{equation}\label{eq:Jacobi_norm}
\langle
P_m^{(\al,\be)},P_n^{(\al,\be)}
\rangle_{(-1,1),\al,\be}
=\frac{2^{\al+\be+1}\,\Ga(n+\al+1)\Ga(n+\be+1)}{(2n+\al+\be+1)\Ga(n+\al+\be+1)\,n!}\delta_{m,n}.
\end{equation}
Formulas~\eqref{eq:Jacobi_Rodrigues} and~\eqref{eq:Jacobi_ortogonality}, and induction
allow us to compute the following integral for $\be>0$:
\begin{equation}\label{eq:Jacobi_with_incomplete_weight}
\int_{-1}^1 P_n^{(\al,\be+1)}(x)\,(1-x)^\al(1+x)^\be\,\dif{}x
=2^{\al+\be+1}(-1)^n \Be(\al+n+1,\be+1),
\end{equation}
where $\Be$ is the well-known Beta function. 

\subsection*{Jacobi polynomials for the unit interval}

The function $t\mapsto 2t-1$
is a bijection from $(0,1)$ onto $(-1,1)$.
Denote by $Q_n^{(\al,\be)}$
the ``shifted Jacobi polynomial''
obtained from $P_n^{(\al,\be)}$
by composing it with this change of variables:
\[
Q_n^{(\al,\be)}(t)\eqdef P_n^{(\al,\be)}(2t-1).
\]
The properties of $Q_n^{(\al,\be)}$
follow easily from the properties of $P_n^{(\al,\be)}$.
In particular, here are analogs
of~\eqref{eq:Jacobi_Rodrigues},
\eqref{eq:Jacobi_explicit2},
and \eqref{eq:Jacobi_explicit3}:
\begin{align}
\label{eq:shifted_Jacobi_Rodrigues}
Q_n^{(\al,\be)}(t)
&=\frac{(-1)^n}{n!}\,(1-t)^{-\al} t^{-\be}
\frac{\dif^n}{\dif{}t^n}
\Bigl((1-t)^{n+\al} t^{n+\be}\Bigr),
\\
\label{eq:shifted_Jacobi_explicit2}
Q_n^{(\al,\be)}(t)
&=\sum_{k=0}^n \binom{\al+\be+n+k}{k}\binom{\be+n}{n-k}
(-1)^{n-k}t^k.
\\
\label{eq:shifted_Jacobi_explicit3}
Q_n^{(\al,\be)}(t)
&=\frac{\Ga(n+\be+1)}{n!\,\Ga(n+\al+\be+1)}
\sum_{k=0}^n \binom{n}{k}
\frac{\Ga(\al+\be+n+k+1)}{\Ga(\be+k+1)}\,
(-1)^{n-k}t^k.
\end{align}
The sequence $(Q_n^{(\al,\be)})_{n=0}^\infty$
is orthogonal on $(0,1)$ with respect to the weight $(1-t)^\al t^\be$,
and
\begin{equation}\label{eq:Q_inner_prod}
    \int_0^1 Q_m^{(\al,\be)}(t)
    Q_n^{(\al,\be)}(t)(1-t)^\al t^\be \dif{}t =\de_{m,n}
    \frac{\Ga(n+\al+1)\Ga(n+\be+1)}{(2n+\al+\be+1)\Ga(n+\al+\be+1)\,n!}.
\end{equation}
Also, here are analogs of~\eqref{eq:Jacobi_ortogonality}
and~\eqref{eq:Jacobi_with_incomplete_weight}:
\begin{equation}\label{eq:shifted_Jacobi_orothogonality}
\int_0^1 h(t) Q_n^{(\al,\be)}(t)(1-t)^\al t^\be\dif{}t = 0,
\end{equation}
\begin{equation}\label{eq:shifted_Jacobi_with_incomplete_weight}
\int_0^1 Q_n^{(\al,\be+1)}(t)(1-t)^\al t^\be \dif{}t =(-1)^n\Be(\al+n+1,\be+1).
\end{equation}
Substituting in~\eqref{eq:shifted_Jacobi_Rodrigues}
$t$ by $tu$ and applying the chain rule, we get
\begin{equation}
\label{eq:shifted_Jacobi_Rodrigues_for_tu}
\frac{\partial^n}{\partial{}t^n}
\Bigl((1-tu)^{n+\al} t^{n+\be}\Bigr)
=n!\,(1-tu)^\al\,t^\be\,Q^{(\al,\be)}_n(tu).
\end{equation}
Inspired by~\eqref{eq:Q_inner_prod}
we define the function $\jac_n^{(\al,\be)}$ on $(0,1)$ as
\begin{equation}\label{eq:jac}
\jac_n^{(\al,\be)}(t)
\eqdef \jaccoef{\al}{\be}{n}
(1-t)^{\al/2}t^{\be/2}Q_n^{(\al,\be)}(t),
\end{equation}
where
\begin{equation}\label{eq:jaccoef}
\jaccoef{\al}{\be}{n}
\eqdef
\sqrt{\frac{(2n+\al+\be+1)\,\Ga(n+\al+\be+1)\,n!}
{\Ga(n+\al+1)\Ga(n+\be+1)}}.
\end{equation}
Then
\begin{equation}\label{eq:jac_orthonormality}
\int_0^1 \jac_m^{(\al,\be)}(t)\jac_n^{(\al,\be)}(t)
\,\dif{}t
=\de_{m,n}.
\end{equation}

\subsection*{Reproducing property for the polynomials on the unit interval}

Given $n$ in $\bNz$ and $\al,\be>-1$,
we denote by $R_n^{(\al,\be)}$ the polynomial
\begin{equation}\label{eq:Rpol_def}
R_n^{(\al,\be)}(t)
\eqdef \frac{(-1)^n\,\Be(\al+1,\be+1)}{\Be(\al+n+1,\be+1)}\,Q_n^{(\al,\be+1)}(t).
\end{equation}

\begin{proposition}\label{prop:R_repr_on_interval}
Let $n\in\bNz$ and $\al,\be>-1$.
Then for every polynomial $h$ with $\deg(h)\le n$,
\begin{equation}\label{eq:R_repr_on_interval}
\frac{1}{\Be(\al+1,\be+1)}\int_0^1 h(t) R_n^{(\al,\be)}(t)\,(1-t)^\al t^\be\,\dif{}t=h(0).
\end{equation}
\end{proposition}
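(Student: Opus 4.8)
The plan is to substitute the definition~\eqref{eq:Rpol_def} of $R_n^{(\al,\be)}$ and reduce the claim to two facts already established for the shifted Jacobi polynomials: the orthogonality relation~\eqref{eq:shifted_Jacobi_orothogonality} and the explicit integral~\eqref{eq:shifted_Jacobi_with_incomplete_weight}. After plugging in~\eqref{eq:Rpol_def}, the factor $\Be(\al+1,\be+1)$ in the numerator cancels with the prefactor $1/\Be(\al+1,\be+1)$, so it suffices to show
\[
\frac{(-1)^n}{\Be(\al+n+1,\be+1)}\int_0^1 h(t)\,Q_n^{(\al,\be+1)}(t)\,(1-t)^\al t^\be\,\dif{}t=h(0).
\]

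Both sides are linear in $h$, so I would work with the decomposition $h(t)=h(0)+t\,\tilde{h}(t)$, where $\tilde{h}$ is a polynomial with $\deg(\tilde{h})\le n-1$. The point is that the extra factor $t$ turns the weight $(1-t)^\al t^\be$ into $(1-t)^\al t^{\be+1}$, which is exactly the weight for which $Q_n^{(\al,\be+1)}$ is orthogonal to polynomials of lower degree. Thus the contribution of $t\,\tilde{h}(t)$ vanishes by~\eqref{eq:shifted_Jacobi_orothogonality} applied with parameters $(\al,\be+1)$, since $\deg(\tilde{h})\le n-1<n$.

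For the remaining constant term, the integral $\int_0^1 Q_n^{(\al,\be+1)}(t)(1-t)^\al t^\be\,\dif{}t$ is evaluated directly by~\eqref{eq:shifted_Jacobi_with_incomplete_weight}, giving $(-1)^n\Be(\al+n+1,\be+1)$. Multiplying by the prefactor $(-1)^n/\Be(\al+n+1,\be+1)$ leaves precisely $h(0)$, as desired.

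The only mildly delicate step is recognizing the splitting $h(t)=h(0)+t\,\tilde{h}(t)$ together with the observation that the factor $t$ shifts the exponent of the weight from $\be$ to $\be+1$; once this is seen, the two cited formulas do all the work and no genuine obstacle remains. The normalization constant in~\eqref{eq:Rpol_def} was evidently chosen precisely so that this cancellation comes out clean.
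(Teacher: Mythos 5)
Your proposal is correct and follows essentially the same route as the paper: the authors also write $h(t)=h(0)+t\,q(t)$ with $\deg(q)\le n-1$, kill the second term via the orthogonality~\eqref{eq:shifted_Jacobi_orothogonality} for the weight $(1-t)^\al t^{\be+1}$, and evaluate the constant term with~\eqref{eq:shifted_Jacobi_with_incomplete_weight}. No differences worth noting.
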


\begin{proof}
The difference $h(t)-h(0)$ divides by $t$.
Denote by $q(t)$ the corresponding quotient.
So, $q$ is a polynomial of degree $\deg(q)\le n-1$
and $h(t)=h(0)+t q(t)$.
Then
\begin{align*}
\frac{1}{\Be(\al+1,\be+1)}&\int_0^1 h(t) R_n^{(\al,\be)}(t)\,(1-t)^\al t^\be\,\dif{}t\\
&=
\frac{(-1)^n h(0)}{\Be(\al+n+1,\be+1)} 
\int_0^1 Q_n^{(\al,\be+1)}(t)\,(1-t)^\al t^\be\,\dif{}t
\\
&\quad + \frac{(-1)^n}{\Be(\al+n+1,\be+1)}
\int_0^1 q(t) Q_n^{(\al,\be+1)}(t)\,(1-t)^\al t^{\be+1}\,\dif{}t.
\end{align*}
By~\eqref{eq:shifted_Jacobi_with_incomplete_weight}
and~\eqref{eq:shifted_Jacobi_orothogonality},
the first summand is $h(0)$ and the second is $0$.
\end{proof}

As a particular case of \eqref{eq:R_repr_on_interval}, for $\be=0$ and $m\le n$, 
\begin{equation}\label{eq:R_interval_k_delta}
    \frac{1}{\al+1}\int_0^1 t^m R_n^{(\al,0)}(t)(1-t)^\al\dif{}t=\delta_{m,0}.
\end{equation}
Formula~\eqref{eq:R_interval_k_delta} was proven in~\cite{HachadiYoussfi2019} in other way.

\section{\texorpdfstring{Orthonormal basis and Fourier decomposition of $\boldsymbol{L^2(\bD,\mu_\al)}$}{Orthonormal basis and Fourier decomposition of L2(D,mualpha)}}
\label{sec:basis}

For each $p, q \in \bNz$, denote by $m_{p,q}$ the monomial function 
\[
m_{p,q}(z)\eqdef z^{p}\conjz^{q}.
\]
The inner product of two monomial functions is
\begin{equation}\label{eq:monomial_inner_product}
\langle m_{p,q},m_{j,k}\rangle
=(\al+1)\Be\left(p+j+1,\al+1\right)\,\de_{p-q,j-k}.
\end{equation}
In particular, this means that the family $(m_{p,q})_{p,q\in \mathbb{N}_0}$ is not orthogonal.

In this section, we recall various equivalent formulas for an orthonormal basis in $L^2(\bD,\mu_\al)$,
that can be obtained by orthonormalizing $(m_{p,q})_{p,q\in\bNz}$,
and whose elements are known as \emph{Jacobi polynomials in $z$ and $\conjz$}, see Koornwinder~\cite{Koornwinder1975},
or \emph{disk polynomials},
see~W\"{u}nsche~\cite{Wunsche2005}, among others.
These polynomials, in the unweighted case,
were also rediscovered in~\cite{Koshelev1977}, \cite{Ramazanov1999}, and \cite{Pessoa2014},
in the context of polyanalytic functions.
We work with a normalized version of the disk polynomials and define them by
\begin{equation}\label{eq:b_pq_by_Ramazanov}
\basic{\al}{p}{q}(z)
\eqdef (-1)^{p+q}\, \bcoef{\al}{p}{q}\,(1-z\conjz)^{-\alpha}\;\frac{\partial^q}{\partial z^q}\,\frac{\partial^p}{\partial{}\conjz^p}
\Bigl((1-z\,\conjz)^{p+q+\alpha}\Bigr),
\end{equation}
where
\begin{equation}\label{eq:c_alpha_p_q}
\bcoef{\al}{p}{q}
=\sqrt{\frac{(\alpha+p+q+1)\Ga(\alpha+p+1)\Ga(\alpha+q+1)}%
{(\alpha+1)p!\,q!\,\Ga(\al+p+q+1)^2}}.
\end{equation}
Since $\displaystyle\frac{\partial}{\partial z}(1-z\conjz)=-\conjz$ and $\displaystyle\frac{\partial}{\partial\conjz}(1-z\conjz)=-z$, the expression in \eqref{eq:b_pq_by_Ramazanov} can be rewritten in other equivalent forms:
\begin{align}
\basic{\al}{p}{q}(z)&=(-1)^{q}\sqrt{\frac{(\alpha+p+q+1)\Ga(\alpha+p+1)}%
{(\alpha+1)p!\,q!\,\Ga(\al+q+1)}}(1-z\overline{z})^{-\alpha}\,\frac{\partial^q}{\partial z^q}
\Bigl(z^p(1-z\,\conjz)^{\al+q}\Bigr),\\
\basic{\al}{p}{q}(z)&=(-1)^{p}\sqrt{\frac{(\alpha+p+q+1)\Ga(\alpha+q+1)}%
{(\alpha+1)p!\,q!\,\Ga(\al+p+1)}}(1-z\overline{z})^{-\alpha}\,\frac{\partial^p}{\partial\overline{z}^p}
\Bigl(\overline{z}^q(1-z\,\conjz)^{\al+p}\Bigr).
\end{align}
By~\eqref{eq:shifted_Jacobi_Rodrigues_for_tu}, $\basic{\al}{p}{q}$ can be expressed via the shifted Jacobi polynomials:
\begin{equation}\label{eq:b_Jacobi}
\basic{\al}{p}{q}(z)
=
\begin{cases}
\sqrt{\dfrac{(\alpha+p+q+1)\Ga(\alpha+p+1)q!}%
{(\alpha+1)\,\Ga(\al+q+1)p!}}\,z^{p-q}Q_{q}^{(\al, p-q)}(|z|^2), & \text{if}\ p\ge q;\\[2ex]
\sqrt{\dfrac{(\alpha+p+q+1)\Ga(\alpha+q+1)p!}%
{(\alpha+1)\,\Ga(\al+p+1)q!}}\,\overline{z}^{q-p}Q_{p}^{(\al, q-p)}(|z|^2), & \text{if}\ p< q.
\end{cases}
\end{equation}
The two cases in \eqref{eq:b_Jacobi} can be joined
and written in terms of~\eqref{eq:jac} and~\eqref{eq:jaccoef}:
\begin{align}\label{eq:b_via_Q}
\basic{\al}{p}{q}(r\tau)
&=\frac{\jaccoef{\al}{|p-q|}{\min\{p,q\}}}{\sqrt{\al+1}}
r^{|p-q|}\tau^{p-q}
Q_{\min\{p,q\}}^{(\al,|p-q|)}(r^2)
\qquad(r\ge0,\ \tau\in\bT),
\\
\label{eq:b_via_jac}
\basic{\al}{p}{q}(r\tau)
&=\frac{\tau^{p-q}(1-r^2)^{-\al/2}}{\sqrt{\al+1}}
\jac_{\min\{p,q\}}^{(\al,|p-q|)}(r^2).
\end{align}
Notice that
\[
\jaccoef{\al}{|p-q|}{\min\{p,q\}}
=\sqrt{\frac{(\al+p+q+1)(\min\{p,q\})!\,\Ga(\al+\max\{p,q\}+1)}%
{(\max\{p,q\})!\,\Ga(\al+\min\{p,q\}+1)}}.
\]
The family $(\basic{\al}{p}{q})_{p,q\in\bNz}$
has the following conjugate symmetric property:
\begin{equation}\label{eq:conjugate_symmetric_property_of_basics}
\overline{\basic{\al}{p}{q}(z)}=\basic{\al}{q}{p}(z).
\end{equation}
Applying~\eqref{eq:shifted_Jacobi_explicit3} in the right-hand side of \eqref{eq:b_Jacobi} we obtain
\begin{equation}\label{eq:b_pq_in_shifted_Jacobi_explicit3_form}
\begin{aligned}
    \basic{\al}{p}{q}(z)&=\sqrt{\frac{(\alpha+p+q+1)p!\,q!}%
{(\alpha+1)\,\Ga(\al+p+1)\Ga(\al+q+1)}}\times\\
&\qquad\times\sum_{k=0}^{\min\{p,q\}}(-1)^{k}\frac{\Ga(\al+p+q+1-k)}{k!\,(p-k)!\,(q-k)!}z^{p-k}\conjz^{q-k},
\end{aligned}
\end{equation}
In particular,~\eqref{eq:b_pq_in_shifted_Jacobi_explicit3_form} implies that $\basic{\al}{p}{q}$ is a polynomial in $z$ and $\conjz$ whose leading term is a positive multiple of the monomials $m_{p-k,q-k}$.

Let $\mathcal{P}$ be the set of all polynomials functions in $z$ and $\conjz$, i.e., the linear span of the monomials
\[
\mathcal{P}\eqdef \lin\{m_{p,q}\colon p,q\in\bNz\}.
\]
For every $\xi \in \mathbb{Z}$ and every $s\in \mathbb{N}$, denote by $\FreqSubspace^{(\al)}_{\xi,s}$ the subspace of $\mathcal{P}$ generated by $m_{p,q}$ with $p-q=\xi$ and $\min\{p,q\}< s$:
\begin{equation}\label{def:fiber_subspace}
\FreqSubspace^{(\al)}_{\xi,s}\eqdef\lin\{m_{p,q}\colon\  p-q=\xi,\ \min\{p,q\}< s\}.
\end{equation}
The vector space $\FreqSubspace^{(\al)}_{\xi,s}$ does not depend on $\al$, but we endow it with the inner product from $L^2(\bD,\mu_\al)$.
Obviously, $\dim(\FreqSubspace^{(\al)}_{\xi,s})=s$.
Let us show that
\begin{equation}\label{eq:truncated_FreqSbpce_gen_by_b}
\FreqSubspace^{(\al)}_{\xi,s}
=\lin\{\basic{\al}{p}{q}\colon\ p-q=\xi,\ \min\{p,q\}< s\}.
\end{equation}
Indeed, by~\eqref{eq:b_pq_in_shifted_Jacobi_explicit3_form},
\begin{equation}\label{eq:m_in_terms_of_b_and_younger_brothers}
\begin{aligned}
 m_{p,q}&=\sqrt{\frac{(\al+1)\Ga(\al+p+1)\Ga(\al+q+1)p!\,q!}
{\Ga(\alpha+p+q+2)}}\,\basic{\al}{p}{q}\\
&\quad-\frac{p!\,q!}{\Ga(\alpha+p+q+1)}\,\sum_{\nu=1}^{\min\{p,q\}}(-1)^{\nu}\frac{\Ga(\al+p+q+1-\nu)}{\nu!\,(p-\nu)!\,(q-\nu)!}\,m_{p-\nu,q-\nu}.
\end{aligned}
\end{equation}
Proceeding by induction on $s$, we see that the monomials $ m_{p,q}$ are linear combinations of $\basic{\al}{p-s}{q-s}$ with $ 0\leq s\leq \min \{ p,q\}$.
So, formula~\eqref{eq:truncated_FreqSbpce_gen_by_b} means that the first $s$ elements in the diagonal $\xi$ of the table $(\basic{\al}{p}{q})_{p,q=0}^\infty$ generate the same subspace as the first $s$ elements of the diagonal $\xi$ in the table $(m_{p,q})_{p,q=0}^\infty$.
For example,
\begin{align*}
\FreqSubspace^{(\al)}_{-2,3}
&=\lin\{m_{0,2},m_{1,3},m_{2,4}\}=\lin\{\basic{\al}{0}{2},\basic{\al}{1}{3},\basic{\al}{2}{4}\},\\
\FreqSubspace^{(\al)}_{1,4}
&=\lin\{m_{1,0},m_{2,1},m_{3,2},m_{4,3}\}=\lin\{\basic{\al}{1}{0},\basic{\al}{2}{1},\basic{\al}{3}{2},\basic{\al}{4}{3}\}.
\end{align*}
In the following tables we show generators of $\FreqSubspace^{(\al)}_{1,4}$ (light blue) and $\FreqSubspace^{(\al)}_{-2,3}$ (pink).
\[
\begin{array}{ccccccc}
m_{0,0} & m_{0,1} & \cellcolor{pink!60!}m_{0,2} & m_{0,3} & m_{0,4} & m_{0,5} & \ddots \\
\cellcolor{lightblue}m_{1,0} & m_{1,1} & m_{1,2} & \cellcolor{pink!60!}m_{1,3} & m_{1,4} & m_{1,5} & \ddots \\
m_{2,0} &\cellcolor{lightblue} m_{2,1} & m_{2,2} & m_{2,3} & \cellcolor{pink!60!}m_{2,4} & m_{2,5} & \ddots \\
m_{3,0} & m_{3,1} & \cellcolor{lightblue}m_{3,2} & m_{3,3} & m_{3,4} & m_{3,5} & \ddots \\
m_{4,0} & m_{4,1} & m_{4,2} & \cellcolor{lightblue}m_{4,3} & m_{4,4} & m_{4,5} & \ddots \\
m_{5,0} & m_{5,1} & m_{5,2} & m_{5,3} & m_{5,4} & m_{5,5} & \ddots \\
\ddots & \ddots & \ddots & \ddots & \ddots & \ddots & \ddots
\end{array}
\qquad\quad
\begin{array}{ccccccc}
\basic{\al}{0}{0} & \basic{\al}{0}{1} & \cellcolor{pink!60!}\basic{\al}{0}{2} & \basic{\al}{0}{3} & \basic{\al}{0}{4} & \basic{\al}{0}{5} & \ddots \\
\cellcolor{lightblue}\basic{\al}{1}{0} & \basic{\al}{1}{1} & \basic{\al}{1}{2} & \cellcolor{pink!60!}\basic{\al}{1}{3} & \basic{\al}{1}{4} & \basic{\al}{1}{5} & \ddots \\
\basic{\al}{2}{0} & \cellcolor{lightblue}\basic{\al}{2}{1} & \basic{\al}{2}{2} & \basic{\al}{2}{3} & \cellcolor{pink!60!}\basic{\al}{2}{4} & \basic{\al}{2}{5} & \ddots \\
\basic{\al}{3}{0} & \basic{\al}{3}{1} & \cellcolor{lightblue}\basic{\al}{3}{2} & \basic{\al}{3}{3} & \basic{\al}{3}{4} & \basic{\al}{3}{5} & \ddots \\
\basic{\al}{4}{0} & \basic{\al}{4}{1} & \basic{\al}{4}{2} &\cellcolor{lightblue}\basic{\al}{4}{3} & \basic{\al}{4}{4} & \basic{\al}{4}{5} & \ddots \\
\basic{\al}{5}{0} & \basic{\al}{5}{1} & \basic{\al}{5}{2} & \basic{\al}{5}{3} & \basic{\al}{5}{4} & \basic{\al}{5}{5} & \ddots \\
\ddots & \ddots & \ddots & \ddots & \ddots & \ddots & \ddots
\end{array}
\]
As a consequence, $\mathcal{P}=\bigcup_{\xi\in \mathbb{Z}}\bigcup_{s\in \mathbb{N}}\FreqSubspace^{(\al)}_{\xi,s}=\lin\{\basic{\al}{p}{q}\colon p,q\in\bNz\}$. 

\begin{proposition}\label{prop:b_is_orthonormal_basis_in_L2}
The family $(\basic{\al}{p}{q})_{p,q\in\bNz}$ is
an orthonormal basis of $L^{2}(\bD, \mu_{\al})$.
\end{proposition}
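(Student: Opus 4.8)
The plan is to verify the two defining properties of an orthonormal basis separately: first that the family is orthonormal, and then that its linear span is dense.

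For orthonormality I would pass to polar coordinates $z=r\tau$ with $r\in(0,1)$ and $\tau\in\bT$, and use the representation \eqref{eq:b_via_jac}. Writing $\langle\basic{\al}{p}{q},\basic{\al}{j}{k}\rangle$ as the iterated integral over $\bT$ and $(0,1)$ against the weight $\frac{\al+1}{\pi}(1-r^2)^\al r$, the integral factors into an angular part times a radial part, since the functions $\jac_n^{(\al,\be)}$ are real-valued so that conjugation only affects the factor $\tau^{p-q}$. The angular integral $\int_\bT \tau^{(p-q)-(j-k)}\,|\dif\tau|$ vanishes unless $p-q=j-k$, forcing the two frequencies to agree; call the common value $\xi$. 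When they agree, the two factors $(1-r^2)^{-\al/2}$ coming from \eqref{eq:b_via_jac} cancel the weight $(1-r^2)^\al$ exactly, and after the substitution $t=r^2$ (which also absorbs the Jacobian, the remaining constants collapsing to $1$) the radial part reduces to $\int_0^1 \jac_{\min\{p,q\}}^{(\al,|\xi|)}(t)\,\jac_{\min\{j,k\}}^{(\al,|\xi|)}(t)\,\dif{}t$, which equals $\de_{\min\{p,q\},\min\{j,k\}}$ by \eqref{eq:jac_orthonormality}. The last step is the bookkeeping observation that $p-q=j-k$ together with $\min\{p,q\}=\min\{j,k\}$ forces $(p,q)=(j,k)$, whence $\langle\basic{\al}{p}{q},\basic{\al}{j}{k}\rangle=\de_{p,j}\de_{q,k}$.

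For completeness I would invoke the fact established just before the statement, namely $\lin\{\basic{\al}{p}{q}\colon p,q\in\bNz\}=\cP$, so that it suffices to show $\cP$ is dense in $L^2(\bD,\mu_\al)$. Since $\mu_\al$ is a finite Borel measure ($\mu_\al(\bD)=1$) carried by the compact set $\overline{\bD}$, the continuous functions on $\overline{\bD}$ are dense in $L^2(\bD,\mu_\al)$. The algebra $\cP$ contains the constants, separates the points of $\overline{\bD}$, and is closed under complex conjugation because $\overline{m_{p,q}}=m_{q,p}$; hence by the Stone--Weierstrass theorem it is uniformly dense in $C(\overline{\bD})$, and uniform approximation implies $L^2$-approximation by finiteness of $\mu_\al$. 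Therefore $\cP$, and with it $\lin\{\basic{\al}{p}{q}\}$, is dense.

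I expect the orthonormality computation to be routine once the angular/radial split is set up, the only mild subtlety being the recovery of $(p,q)$ from the difference $p-q$ and the minimum $\min\{p,q\}$. The step that genuinely needs input beyond the formulas already derived is the density of $\cP$; there the key move is to apply Stone--Weierstrass on $\overline{\bD}$ and to pass from uniform to $L^2$ convergence using the finiteness of the measure.
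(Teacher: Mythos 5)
Your proposal is correct and follows essentially the same route as the paper: orthonormality via the polar-coordinate factorization of \eqref{eq:b_via_jac} combined with \eqref{eq:jac_orthonormality} and the observation that $p-q$ and $\min\{p,q\}$ determine $(p,q)$, and completeness via the identity $\lin\{\basic{\al}{p}{q}\}=\cP$ followed by Stone--Weierstrass on $\clos(\bD)$ and the passage from uniform to $L^2$ approximation using the finiteness of $\mu_\al$. The only cosmetic difference is that the paper cites Luzin's theorem for the density of continuous functions in $L^2(\bD,\mu_\al)$, whereas you invoke the same standard fact directly; both are fine.
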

\begin{proof}
The orthonormal property follows straightforwardly from~\eqref{eq:b_via_jac} and~\eqref{eq:jac_orthonormality}:
\begin{align*}
    \langle \basic{\al}{p}{q},\basic{\al}{j}{k}\rangle
&=\frac{1}{2\pi}\int_{0}^{2\pi}
\econstant^{\imagunit(p-q-j+k)\theta}\,\dif{\theta}\int_{0}^{1}\jac_{\min\{p,q\}}^{(\al,|p-q|)}(t)\,
\jac_{\min\{j,k\}}^{(\al,|j-k|)}(t)\,\dif{}t\\
&= \de_{p-q,j-k}\cdot\de_{\min\{p,q\},\min\{j,k\}}
=\de_{p,j}\cdot\de_{q,k}.
\end{align*}
By the Stone--Weierstrass theorem, $\mathcal{P}$ is dense in $C(\clos(\bD))$. In turn, by Luzin's theorem, the set $C(\clos(\bD))|_{\bD}$ is dense in $L^2(\bD, \mu_{\al})$ and for every $f \in C(\clos(\bD))$ we have $\|f\|\leq \max_{z\in \clos(\bD)}|f(z)|$. Now it is easy to see that the set $\mathcal{P}$ is dense in $L^2(\bD, \mu_{\al})$, that is, the set of all linear combinations of elements of the family is dense in $L^2(\bD, \mu_{\al})$. For that reason, $(\basic{\al}{p}{q})_{p,q\in\bNz}$
is a complete orthonormal family.
\end{proof}

\begin{corollary}\label{cor:basis_of_truncated_FreqSubspace}
Let $\xi\in\bZ$ and $s\in\bN$.
Then $(\basic{\al}{q+\xi}{q})_{q=\max\{0,-\xi\}}^{\max\{s-1,s-\xi-1\}}$
is an orthonormal basis of~
$\FreqSubspace^{(\al)}_{\xi,s}$.
\end{corollary}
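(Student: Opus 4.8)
The plan is to deduce this directly from Proposition~\ref{prop:b_is_orthonormal_basis_in_L2} together with the spanning identity~\eqref{eq:truncated_FreqSbpce_gen_by_b}, so that essentially no computation is required beyond a reindexing check.

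First I would reparametrize the generating set on the right-hand side of~\eqref{eq:truncated_FreqSbpce_gen_by_b}. A pair $(p,q)$ with $p-q=\xi$ is determined by $q$ alone, via $p=q+\xi$; the constraints $p,q\in\bNz$ force $q\ge\max\{0,-\xi\}$. It then remains to translate the condition $\min\{p,q\}<s$ into a bound on $q$. When $\xi\ge0$ we have $\min\{p,q\}=q$, so the condition reads $q\le s-1$; when $\xi<0$ we have $\min\{p,q\}=p=q+\xi$, so the condition reads $q\le s-\xi-1$. In both cases the upper bound is exactly $\max\{s-1,s-\xi-1\}$, which matches the index range in the statement, and a quick count shows the family has $s$ elements either way, in agreement with $\dim(\FreqSubspace^{(\al)}_{\xi,s})=s$.

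With the indexing settled, the two required properties are immediate. Orthonormality is inherited: by Proposition~\ref{prop:b_is_orthonormal_basis_in_L2} the full family $(\basic{\al}{p}{q})_{p,q\in\bNz}$ is orthonormal in $L^2(\bD,\mu_\al)$, hence so is any subfamily, in particular $(\basic{\al}{q+\xi}{q})_{q=\max\{0,-\xi\}}^{\max\{s-1,s-\xi-1\}}$. The spanning property is precisely~\eqref{eq:truncated_FreqSbpce_gen_by_b}, now read through the reindexing of the previous paragraph. An orthonormal family that spans a subspace is by definition an orthonormal basis of that subspace, which completes the argument.

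The only point demanding any care is the bookkeeping of the index range, i.e.\ checking that $\min\{p,q\}<s$ collapses to the single upper bound $\max\{s-1,s-\xi-1\}$ across the sign cases of $\xi$; I expect this to be the sole, and entirely routine, obstacle, since orthonormality and spanning are both handed to us by earlier results.
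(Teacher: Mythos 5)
Your proposal is correct and follows exactly the route the paper intends: the corollary is stated without proof precisely because it is immediate from Proposition~\ref{prop:b_is_orthonormal_basis_in_L2} (orthonormality of the full family) together with the spanning identity~\eqref{eq:truncated_FreqSbpce_gen_by_b}, and your reindexing of the condition $\min\{p,q\}<s$ into the bound $q\le\max\{s-1,s-\xi-1\}$ is the only detail to check, which you do correctly in both sign cases of $\xi$.
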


\begin{remark}\label{rem:products_b_m}
By Proposition~\ref{prop:b_is_orthonormal_basis_in_L2} and formula~\eqref{eq:m_in_terms_of_b_and_younger_brothers}, 
\begin{equation}\label{eq:product_b_m}
\langle m_{\xi+k,k},\basic{\al}{\xi+q}{q}\rangle
=
\begin{cases}
\sqrt{\frac{(\al+1)\Ga(\al+p+1)\Ga(\al+q+1)p!\,q!}
{\Ga(\alpha+p+q+2)\Ga(\alpha+p+q+1)}},\, & k=q; \\[0.5ex]
0, & \max\{0,-\xi \}\leq k<q.
\end{cases}
\end{equation}
\end{remark}

The table of basic functions can be expressed as follows:
\begin{align*}
\basic{\al}{0}{0}(z) &= h^{(\al)}_{0,0}(|z|^2), &
\basic{\al}{0}{1}(z) &=  \conjz\,h^{(\al)}_{0,1}(|z|^2), &
\basic{\al}{0}{2}(z)&= \conjz^{2}\,h^{(\al)}_{0,2}(|z|^2),
\\[1ex]
\basic{\al}{1}{0}(z) &= z\,h^{(\al)}_{1,0}(|z|^2), &
\basic{\al}{1}{1}(z)&= h^{(\al)}_{1,1}(|z|^2), &
\basic{\al}{1}{2}(z)&=\conjz\,h^{(\al)}_{1,2}(|z|^2),
\\[1ex]
\basic{\al}{2}{0}(z) &= z^{2}\,h^{(\al)}_{2,0}(|z|^2), &
\basic{\al}{2}{1}(z)&=z\,h^{(\al)}_{2,1}(|z|^2), &
\basic{\al}{2}{2}(z)&=h^{(\al)}_{2,2}(|z|^2),
\end{align*}
where $h^{(\al)}_{p,q}(t)\eqdef\frac{\jaccoef{\al}{|p-q|}{\min\{p,q\}}}{\sqrt{\al+1}}
Q_{\min\{p,q\}}^{(\al,|p-q|)}(t)$.
Below we show explicitly some elements of this basis.
\begin{align*}
\basic{\al}{0}{0}(z) &= 1, \qquad 
\basic{\al}{1}{0}(z)= \sqrt{\al +2}\,z, \qquad 
\basic{\al}{2}{0}(z)= \sqrt{\frac{(\al+3)(\al+2)}{2}}\,z^2,
\\[1ex]
\basic{\al}{1}{1}(z)
&= \sqrt{\displaystyle\frac{\al +3}{\al +1}}\big ((\al +2)z\conjz-1 \big ), \qquad
\basic{\al}{2}{1}(z)
=\sqrt{\displaystyle\frac{2(\al +3)(\al +2)}{\al +1}}
\left(\frac{\al+3}{2}z^2\conjz
-z \right),
\\[1ex]
\basic{\al}{2}{2}(z)
&=\sqrt{\displaystyle\frac{\al +5}{\al +1}}
 \left(
 \frac{(\al +4)(\al+3)}{2} z^2\conjz^2
 -2(\al +3)z\conjz
 +\frac{1}{2} \right).
\end{align*}
Now, for every $\xi$ in $\bZ$ we introduce the subspace $\FreqSubspace_\xi^{(\al)}$
associated to the ``frequency'' $\xi$
or, equivalently, to the diagonal $\xi$ in the tables $(m_{p,q})_{p,q\in\bNz}$
and $(\basic{\al}{p}{q})_{p,q\in\bZ}$:
\[
\FreqSubspace_\xi^{(\al)}
\eqdef\clos(\lin\{m_{p,q}\colon p-q=\xi\})
=\clos\Biggl(\,\displaystyle
\bigcup_{s\in \bN}
\FreqSubspace_{\xi,s}^{(\al)}\,
\Biggr).
\]

\begin{corollary}\label{cor:basis_of_FreqSubspace}
The sequence $(\basic{\al}{\xi+q}{q})_{q=\max\{0,-\xi\}}^\infty$
is an orthonormal basis of $\FreqSubspace_\xi^{(\al)}$.
\end{corollary}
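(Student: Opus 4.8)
The plan is to deduce the statement directly from Proposition~\ref{prop:b_is_orthonormal_basis_in_L2} and Corollary~\ref{cor:basis_of_truncated_FreqSubspace}, so that essentially no new computation is needed. First I would record that the index set $\{(\xi+q,q)\colon q\ge\max\{0,-\xi\}\}$ is exactly the set of pairs $(p,q)$ with $p,q\in\bNz$ and $p-q=\xi$: the constraint $q\ge\max\{0,-\xi\}$ is precisely what forces both $q\ge 0$ and $p=\xi+q\ge 0$ (it reduces to $q\ge 0$ when $\xi\ge 0$ and to $q\ge-\xi$ when $\xi<0$). Hence the sequence $(\basic{\al}{\xi+q}{q})_{q\ge\max\{0,-\xi\}}$ is nothing but the subfamily of the full family $(\basic{\al}{p}{q})_{p,q\in\bNz}$ lying on the diagonal $p-q=\xi$.

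Orthonormality is then immediate: any subfamily of an orthonormal family is orthonormal, and the full family $(\basic{\al}{p}{q})_{p,q\in\bNz}$ is orthonormal by Proposition~\ref{prop:b_is_orthonormal_basis_in_L2}. It remains only to establish completeness of this subfamily inside $\FreqSubspace_\xi^{(\al)}$, i.e. that its closed linear span equals $\FreqSubspace_\xi^{(\al)}$.

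For completeness I would combine Corollary~\ref{cor:basis_of_truncated_FreqSubspace} with the defining equality $\FreqSubspace_\xi^{(\al)}=\clos\bigl(\bigcup_{s\in\bN}\FreqSubspace^{(\al)}_{\xi,s}\bigr)$. By that corollary, for each $s$ the finite family $(\basic{\al}{\xi+q}{q})_{q=\max\{0,-\xi\}}^{\max\{s-1,s-\xi-1\}}$ spans $\FreqSubspace^{(\al)}_{\xi,s}$; as $s\to\infty$ the upper index $\max\{s-1,s-\xi-1\}$ tends to $+\infty$, so the increasing union $\bigcup_{s\in\bN}\FreqSubspace^{(\al)}_{\xi,s}$ coincides with the algebraic linear span $\lin\{\basic{\al}{\xi+q}{q}\colon q\ge\max\{0,-\xi\}\}$. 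Passing to closures and invoking the definition yields $\clos(\lin\{\basic{\al}{\xi+q}{q}\colon q\ge\max\{0,-\xi\}\})=\FreqSubspace_\xi^{(\al)}$, which is exactly the assertion that this orthonormal sequence is an orthonormal basis of $\FreqSubspace_\xi^{(\al)}$.

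Since every step is a bookkeeping consequence of results already proved, I do not anticipate a genuine obstacle; the only point demanding care is the index arithmetic, namely checking that the union over $s$ of the truncated bases exhausts the whole diagonal and that the lower bound $\max\{0,-\xi\}$ is handled uniformly in the two sign cases of $\xi$. A quick sanity check against the earlier example $\FreqSubspace^{(\al)}_{-2,3}=\lin\{\basic{\al}{0}{2},\basic{\al}{1}{3},\basic{\al}{2}{4}\}$ (here $\xi=-2$, $\max\{0,-\xi\}=2$, and the indices are $(\xi+q,q)$ for $q=2,3,4$) confirms that the indexing is correct.
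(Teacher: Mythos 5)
Your proposal is correct and follows exactly the route the paper intends: the paper states this as an unproved corollary of Proposition~\ref{prop:b_is_orthonormal_basis_in_L2} (orthonormality of the full family) together with Corollary~\ref{cor:basis_of_truncated_FreqSubspace} and the definition of $\FreqSubspace_\xi^{(\al)}$ as the closure of $\bigcup_{s}\FreqSubspace_{\xi,s}^{(\al)}$. Your index bookkeeping, including the check against the $\xi=-2$, $s=3$ example, is accurate.
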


The space $\FreqSubspace_\xi^{(\al)}$ can be naturally identified with $L^2$ over $(0,1)$, providing $(0,1)$ with various weights.

\begin{proposition}
\label{prop:polar_description_of_W}
Each one of the following linear operators is an isometric isomorphism of Hilbert spaces:
\begin{itemize}
\item[1)]
$L^2((0,1),(\al+1)\,(1-t)^{\al}\dif t)
\to \FreqSubspace_\xi^{(\al)}$,\quad
$h\mapsto f$,\quad
\begin{equation}\label{polar_description_of_D1}
f(r\tau)\eqdef\tau^{\xi} h(r^2),\quad\text{i.e.,} \quad
f(z)\eqdef\sign^{\xi} (z)h(z\conjz),
\end{equation}
where $z\in\bD$, $0\le r<1$, $\tau\in\bT$;
\item[2)]
$L^2((0,1),(\al+1)\,t^{|\xi|}(1-t)^{\al})
\to \FreqSubspace_\xi^{(\al)},$\quad
$h\mapsto f$,\quad
\begin{equation}\label{polar_description_of_D2}
f(r\tau)\eqdef \tau^{\xi}r^{|\xi|} h(r^2),\quad\text{i.e.,} \quad 
f(z)\eqdef\begin{cases}
z^{\xi}h(z\conjz),& \xi\geq 0,\\
\conjz^{\xi}h(z\conjz), &\xi<0;
\end{cases}
\end{equation}
\item[3)]
$L^2((0,1))
\to \FreqSubspace_\xi^{(\al)}$,\quad
$h\mapsto f$,\quad
\begin{equation}\label{polar_description_of_D3}
f(r\tau)\eqdef \tau^{\xi}\frac{(1-r^2)^{-\al/2}}{\sqrt{\al+1}}\,h(r^2),\quad\text{i.e.,} \quad
f(z)\eqdef\sign^{\xi}(z)\frac{(1-z\conjz)^{-\al/2}}{\sqrt{\al+1}}h(z\conjz).
\end{equation}
\end{itemize}
\end{proposition}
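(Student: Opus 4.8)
The plan is to reduce all three claims to item 3) and then settle that case by matching orthonormal bases. First I would observe that the maps in items 1) and 2) factor through the map in item 3). Writing $V_3$ for the operator in \eqref{polar_description_of_D3}, the operator $V_1$ of \eqref{polar_description_of_D1} equals $V_3\circ M_1$ with $M_1 h\eqdef\sqrt{\al+1}\,(1-t)^{\al/2}h$, and the operator $V_2$ of \eqref{polar_description_of_D2} equals $V_3\circ M_2$ with $M_2 g\eqdef\sqrt{\al+1}\,(1-t)^{\al/2}t^{|\xi|/2}g$. Each $M_j$ is multiplication by an a.e.\ positive function chosen so that the weight of its domain is turned exactly into the constant weight of $L^2((0,1))$; hence $M_1$ and $M_2$ are isometric isomorphisms onto $L^2((0,1))$, with the obvious inverses. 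Consequently, once $V_3$ is shown to be an isometric isomorphism, the same follows for $V_1=V_3 M_1$ and $V_2=V_3 M_2$ as compositions of isometric isomorphisms.

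To treat $V_3$ I would first verify the isometry identity by passing to polar coordinates $z=r\tau$ with $\tau=\econstant^{\imagunit\theta}$. Since $|\tau^{\xi}|=1$, integrating over $\theta$ yields a factor $2\pi$, the factor $(1-r^2)^{-\al}$ coming from $|f|^2$ cancels the weight $(1-r^2)^{\al}$ of $\mu_\al$, and the substitution $t=r^2$ turns $2\int_0^1|h(r^2)|^2 r\,\dif r$ into $\int_0^1|h(t)|^2\,\dif t$. This gives $\|V_3 h\|=\|h\|_{L^2((0,1))}$, so $V_3$ is a well-defined isometry into $L^2(\bD,\mu_\al)$.

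Next I would identify the range by matching bases. By \eqref{eq:jac_orthonormality} the functions $(\jac_m^{(\al,|\xi|)})_{m\in\bNz}$ form an orthonormal system in $L^2((0,1))$, and it is complete there because the shifted Jacobi polynomials $(Q_m^{(\al,|\xi|)})_m$ are an orthogonal basis of $L^2((0,1),(1-t)^{\al}t^{|\xi|}\,\dif t)$ (the $P_m$ basis property transported through $t\mapsto 2t-1$) and multiplication by $(1-t)^{\al/2}t^{|\xi|/2}$ is an isometric isomorphism of that space onto $L^2((0,1))$. Comparing \eqref{polar_description_of_D3} with \eqref{eq:b_via_jac}, and keeping track of the indices $m=\min\{p,q\}$ and $\xi=p-q$, I obtain $V_3(\jac_m^{(\al,|\xi|)})=\basic{\al}{\xi+q}{q}$, where $q$ runs over the range of Corollary~\ref{cor:basis_of_FreqSubspace}. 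Since $V_3$ is an isometry, its range is the closed linear span of the images of an orthonormal basis, that is the closed span of $(\basic{\al}{\xi+q}{q})_q$, which equals $\FreqSubspace_\xi^{(\al)}$ by Corollary~\ref{cor:basis_of_FreqSubspace}. Hence $V_3$ is a surjective isometry onto $\FreqSubspace_\xi^{(\al)}$, i.e.\ an isometric isomorphism.

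The main obstacle is the bookkeeping: getting the correspondence $m=\min\{p,q\}$, $|p-q|=|\xi|$ right in both cases $\xi\ge 0$ and $\xi<0$, and confirming that $(\jac_m^{(\al,|\xi|)})_m$ is complete in $L^2((0,1))$ (equivalently, that the range of $V_3$ is not a proper subspace of $\FreqSubspace_\xi^{(\al)}$). Everything else — the change-of-variables isometry computation and the factorizations $V_1=V_3 M_1$, $V_2=V_3 M_2$ — is routine once the polar coordinates are set up.
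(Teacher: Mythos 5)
Your proposal is correct and follows essentially the same route as the paper: isometry via the polar-coordinate computation and surjectivity via the orthonormal basis $(\basic{\al}{\xi+q}{q})_{q\ge\max\{0,-\xi\}}$ of $\FreqSubspace_\xi^{(\al)}$ from Corollary~\ref{cor:basis_of_FreqSubspace}, matched against $(\jac_m^{(\al,|\xi|)})_m$ through formula~\eqref{eq:b_via_jac}. The factorization of the maps in items 1) and 2) through the map in item 3) via the multiplication operators $M_1$, $M_2$ is just a tidy way of organizing the three cases that the paper treats uniformly, and all the details (the index correspondence $m=\min\{p,q\}$ and the completeness of the Jacobi system) check out.
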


\begin{proof}
In each case, the isometric property is verified directly using polar coordinates,
and the surjective property is justified
with the help of the orthonormal basis of $\FreqSubspace_\xi^{(\al)}$
(Corollary~\ref{cor:basis_of_FreqSubspace}).
The function $\sign\colon\bC\to\bC$ is defined by $\sign(z)\eqdef z/|z|$ for $z\ne 0$ and $\sign(0)\eqdef 0$.
\end{proof}

\begin{corollary}\label{cor:decomposition_L2_into_diagonals}
The space $L^2(\bD,\dif{}\mu_{\al})$
is the orthogonal sum of the subspaces $\FreqSubspace_{\xi}^{(\al)}$:
\begin{equation}\label{eq:decomposition_L2_into_diagonal_subspaces}
L^2(\bD,\dif{}\mu_{\al})
=\bigoplus_{\xi\in\bZ}\FreqSubspace_{\xi}^{(\al)}.
\end{equation}
\end{corollary}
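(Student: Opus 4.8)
The plan is to read off the decomposition directly from the orthonormal basis already in hand, so that the statement becomes a bookkeeping consequence of Proposition~\ref{prop:b_is_orthonormal_basis_in_L2} and Corollary~\ref{cor:basis_of_FreqSubspace}. First I would recall that $(\basic{\al}{p}{q})_{p,q\in\bNz}$ is an orthonormal basis of $L^2(\bD,\mu_\al)$, and observe that the index set $\bNz\times\bNz$ is partitioned into the diagonals $\{(p,q)\colon p-q=\xi\}$ indexed by $\xi\in\bZ$. This partition of the index set is the combinatorial heart of the argument: every basic function lies on exactly one diagonal, determined by its frequency $\xi=p-q$.

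Next I would use Corollary~\ref{cor:basis_of_FreqSubspace}, which states that the $\xi$-th diagonal $(\basic{\al}{\xi+q}{q})_{q=\max\{0,-\xi\}}^\infty$ is an orthonormal basis of $\FreqSubspace_\xi^{(\al)}$. For the mutual orthogonality $\FreqSubspace_\xi^{(\al)}\perp\FreqSubspace_\eta^{(\al)}$ with $\xi\neq\eta$, I would note that any element of $\FreqSubspace_\xi^{(\al)}$ and any element of $\FreqSubspace_\eta^{(\al)}$ are limits of linear combinations of basic functions lying on the diagonals $\xi$ and $\eta$ respectively; since all these basic functions belong to the single orthonormal family and come from disjoint diagonals, they are pairwise orthogonal, and orthogonality passes to the closed spans by continuity of the inner product.

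For completeness I would argue that the union over $\xi\in\bZ$ of the diagonal bases is exactly the full orthonormal basis $(\basic{\al}{p}{q})_{p,q\in\bNz}$; hence the closed linear span of $\bigcup_{\xi\in\bZ}\FreqSubspace_\xi^{(\al)}$ coincides with the closed span of the entire orthonormal basis, which is all of $L^2(\bD,\mu_\al)$. Combining mutual orthogonality with this density yields precisely the orthogonal-sum decomposition $L^2(\bD,\mu_\al)=\bigoplus_{\xi\in\bZ}\FreqSubspace_{\xi}^{(\al)}$. I do not anticipate any real obstacle here, as the substantive work was already carried out in establishing the basis and the fiber bases; the only point deserving a word of care is the standard Hilbert-space fact that partitioning an orthonormal basis into disjoint subfamilies produces an orthogonal decomposition of the space into the closed spans of the parts, which is exactly the situation created by the diagonals.
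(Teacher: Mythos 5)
Your argument is correct and is exactly the route the paper intends: the corollary is stated without proof precisely because it follows from Proposition~\ref{prop:b_is_orthonormal_basis_in_L2} and Corollary~\ref{cor:basis_of_FreqSubspace} by partitioning the orthonormal basis $(\basic{\al}{p}{q})_{p,q\in\bNz}$ into the diagonals $p-q=\xi$, just as you describe. No gaps; the standard Hilbert-space fact you invoke at the end is all that is needed.
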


The result of Corollary~\ref{cor:decomposition_L2_into_diagonals} can be seen as the \emph{Fourier decomposition} of the space $L^2(\bD,\dif{}\mu_{\al})$,
and each space $\FreqSubspace_\xi^{(\al)}$
corresponds to the \emph{frequency} $\xi$.

Here we show the generators of $\mathcal{W}_{0}^{(\al)}$ (pink) and $\mathcal{W}_{-1}^{(\al)}$ (light blue): 
\[
\begin{array}{ccccc}
\cellcolor{pink!60!}m_{0,0} &\cellcolor{lightblue} m_{0,1} & m_{0,2} & m_{0,3} & \ddots \\
m_{1,0} & \cellcolor{pink!60!}m_{1,1} &\cellcolor{lightblue} m_{1,2} & m_{1,3} & \ddots \\
m_{2,0} & m_{2,1} &\cellcolor{pink!60!} m_{2,2} &\cellcolor{lightblue} m_{2,3} & \ddots \\
m_{3,0} & m_{3,1} & m_{3,2} &\cellcolor{pink!60!} m_{3,3} & \cellcolor{lightblue}\ddots \\
\ddots & \ddots & \ddots & \ddots &\cellcolor{pink!60!} \ddots
\end{array}
\qquad\quad
\begin{array}{ccccc}
\cellcolor{pink!60!}\basic{\al}{0}{0} & \cellcolor{lightblue}\basic{\al}{0}{1} & \basic{\al}{0}{2} & \basic{\al}{0}{3} & \ddots \\
\basic{\al}{1}{0} & \cellcolor{pink!60!}\basic{\al}{1}{1} &\cellcolor{lightblue} \basic{\al}{1}{2} & \basic{\al}{1}{3} & \ddots \\
\basic{\al}{2}{0} & \basic{\al}{2}{1} & \cellcolor{pink!60!}\basic{\al}{2}{2} &\cellcolor{lightblue} \basic{\al}{2}{3} & \ddots \\
\basic{\al}{3}{0} & \basic{\al}{3}{1} & \basic{\al}{3}{2} & \cellcolor{pink!60!}\basic{\al}{3}{3} &\cellcolor{lightblue} \ddots \\
\ddots & \ddots & \ddots & \ddots & \cellcolor{pink!60!}\ddots
\end{array}
\]

\section{Weighted mean value property of polyanalytic functions}
\label{sec:weighted_mean_value}

It is known \cite[Section~1.1]{Balk1991}
that any $n$-analytic function can be expressed as a ``polynomial'' of degree $n-1$ in the variable $\conjz$ with $1$-analytic coefficients, that is, for any $f\in\cA_n(\bD)$, there exist analytic functions $g_0,\:g_1,\ldots,g_{n-1}$ in $\bD$ such that
\[
f(z) = \sum\limits_{k=0}^{n-1} g_k(z)\conjz^k
\qquad (z\in\bD).
\]
Replacing every $g_k$ by its Taylor series,
we get another classic form of $n$-analytic functions: there exist coeficients $\la_{j,k}$ in $\bC$ such that
\begin{equation}\label{aux:poly_expansion}
f(z) = \sum\limits_{k=0}^{n-1}\sum\limits_{j=0}^{\infty} \la_{j,k}z^j\conjz^k
\qquad (z\in\bD).
\end{equation}

The following weighted mean value property
was proved by Hachadi y Youssfi~\cite{HachadiYoussfi2019}
using a slightly different method.
The mean value property for solutions of more general elliptic equations is studied in~\cite{Trofymenko2018}.

\begin{proposition}\label{prop:weighted_mean_value_property}
Let $f\in\cA_n(\bD)$ such that
\[
\int_{\bD}|f(z)|\,(1-|z|^2)^\al\,\dif\mu(z)<+\infty.
\]
Then
\begin{equation}\label{eq:weighted_mean_value_property}
f(0)=\frac{\al+1}{\pi}\int_{\bD}f(z)
R_{n-1}^{(\al,0)}(|z|^2)\,(1-|z|^2)^\al\,\dif\mu(z).
\end{equation}
\end{proposition}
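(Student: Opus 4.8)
The plan is to reduce the two-dimensional integral in~\eqref{eq:weighted_mean_value_property} to the one-dimensional reproducing property of Proposition~\ref{prop:R_repr_on_interval} by averaging over the angular variable. First I would invoke the expansion~\eqref{aux:poly_expansion}, which represents $f$ as a series $f(z)=\sum_{k=0}^{n-1}\sum_{j=0}^\infty\la_{j,k}z^j\conjz^k$; since each coefficient $g_k(z)=\sum_{j}\la_{j,k}z^j$ is analytic on $\bD$, this series converges uniformly on every circle $\{|z|=r\}$ with $r<1$. In this notation $f(0)=\la_{0,0}$.

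Next I would pass to polar coordinates $z=r\tau$ with $\tau=\econstant^{\imagunit\theta}$ and integrate first over $\theta$. Because $R_{n-1}^{(\al,0)}$ is a polynomial, it is bounded on $[0,1]$, so the integrand is dominated by a constant multiple of $|f(z)|(1-|z|^2)^\al$; the integrability hypothesis then guarantees absolute integrability, and Fubini's theorem applies. For each fixed $r<1$, uniform convergence on the circle lets me integrate the series term by term, and only the diagonal terms $j=k$ survive the angular average:
\[
\frac{1}{2\pi}\int_0^{2\pi}f(r\econstant^{\imagunit\theta})\,\dif\theta
=\sum_{k=0}^{n-1}\la_{k,k}\,r^{2k}=P(r^2),
\]
where $P(t)\eqdef\sum_{k=0}^{n-1}\la_{k,k}t^k$ is a polynomial of degree at most $n-1$ with $P(0)=\la_{0,0}=f(0)$.

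With the angular integration carried out, the substitution $t=r^2$ turns the right-hand side of~\eqref{eq:weighted_mean_value_property} into
\[
(\al+1)\int_0^1 P(t)\,R_{n-1}^{(\al,0)}(t)\,(1-t)^\al\,\dif t.
\]
Since $\deg P\le n-1$ and $\tfrac{1}{\Be(\al+1,1)}=\al+1$, Proposition~\ref{prop:R_repr_on_interval} (with $\be=0$) evaluates this integral to $P(0)=f(0)$, which is exactly the claim.

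The main obstacle is the rigorous justification of interchanging summation and integration up to the boundary of $\bD$: the uniform-convergence argument is only available on circles of radius $r<1$, so the precise role of the integrability hypothesis is to supply, via boundedness of the polynomial factor $R_{n-1}^{(\al,0)}$, the domination needed for Fubini's theorem, after which the term-by-term angular integration is legitimate. Everything else is a direct computation together with a single application of the reproducing property on the interval.
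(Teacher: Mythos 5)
Your proof is correct and follows essentially the same route as the paper's: expand $f$ via \eqref{aux:poly_expansion}, kill the off-diagonal terms by angular integration, and reduce to the one-dimensional reproducing property of $R_{n-1}^{(\al,0)}$ on $(0,1)$. The only cosmetic difference is that the paper integrates over $r\bD$ and lets $r\to1$ by dominated convergence, whereas you apply Fubini on the full disk directly and invoke Proposition~\ref{prop:R_repr_on_interval} instead of its special case \eqref{eq:R_interval_k_delta}; both justifications are valid.
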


\begin{proof}
For $0<r\le1$, consider
\[
I(r)\eqdef\frac{\al+1}{\pi}\int_{r\bD} f(w)
R_{n-1}^{(\al,0)}(|w|^2)\,(1-|w|^2)^{\al}\,\dif\mu(w).
\]
Then, combining \eqref{aux:poly_expansion} with the polar decomposition, we get that
\[
I(r) = \sum_{k=0}^{n-1} \sum_{j=0}^{\infty} \la_{j,k}
\left((\al+1)\int_{0}^r u^{j+k}R_{n-1}^{(\al,0)}(u^2)(1-u^2)^\al\,2u\,\dif u\right)
\left(\frac{1}{2\pi}\int_0^{2\pi}e^{i(j-k)\theta}\dif\theta\right).
\]
The terms in the inner series vanish whenever $j\neq k$.
Setting $t$ instead of $u^2$ in the integral, we have
\[
I(r) = \sum_{k=0}^{n-1}\la_{k,k}
(\al+1)\int_0^{r^2} t^k R_{n-1}^{(\al,0)}(t)(1-t)^{\al}\dif t.
\]
Take limits in both sides when $r$ tends to $1$. 
Apply Lebesgue's dominated convergence theorem
to get the integral over $\bD$ in the left-hand side
and the integral over $[0,1)$ in the right-hand side.
By formula \eqref{eq:R_interval_k_delta}, 
\[
I(1)=\sum_{k=0}^{n-1}\la_{k,k}(\al+1)
\int_0^1 t^k R_{n-1}^{(\al,0)}(t)(1-t)^{\al}\dif t
=\sum_{k=0}^{n-1} \la_{k,k}\,\delta_{k,0}
=\la_{0,0}=f(0). \qedhere
\]
\end{proof}

For $\al=0$, Proposition~\ref{prop:weighted_mean_value_property}
reduces to the following
mean value property that appeared in~\cite{Koshelev1977} and \cite{Pessoa2014}.

\begin{corollary}
Let $z\in\bC$, $r>0$, and $f\in\cA_n(z+r\bD)$ such that
\[
\int_{z+r\bD}|f(w)|\,\dif\mu(w)<+\infty.
\]
Then
\begin{equation}\label{eq:mean_value_property}
f(z)=\frac{\al+1}{\pi\,r^2}\int_{z+r\bD}f(w)
R_{n-1}^{(\al,0)}\left(\frac{|w-z|^2}{r^2}\right)
\dif{}\mu(w).
\end{equation}
\end{corollary}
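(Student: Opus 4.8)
The plan is to obtain~\eqref{eq:mean_value_property} from Proposition~\ref{prop:weighted_mean_value_property} by an affine change of variables, exploiting that $n$-analyticity and the reproducing identity are both covariant under translations and real dilations. I would introduce the pullback $g(\eta)\eqdef f(z+r\eta)$ for $\eta\in\bD$, apply Proposition~\ref{prop:weighted_mean_value_property} to $g$ at the origin, and then transport the resulting identity back to the variable $w=z+r\eta$. The $\al$-dependence, the prefactor, and the kernel are then inherited verbatim from the proposition, rescaled by the Jacobian of the substitution.

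First I would check that $g\in\cA_n(\bD)$. Writing $w=z+r\eta$, one has the operator identity $\partial_{\overline{\eta}}=r\,\partial_{\overline{w}}$, since $\partial_{\overline{\eta}}w=0$ and $\partial_{\overline{\eta}}\overline{w}=r$ (here $r>0$ is real); hence $\partial_{\overline{\eta}}^{n}g=r^n\,(\partial_{\overline{w}}^{n}f)(z+r\eta)=0$, so $g$ is $n$-analytic on the unit disk. I would then transfer the integrability hypothesis through the same substitution, under which $\dif\mu(w)=r^2\,\dif\mu(\eta)$ and $|\eta|^2=|w-z|^2/r^2$:
\[
\int_{\bD}|g(\eta)|\,(1-|\eta|^2)^{\al}\,\dif\mu(\eta)
=\frac{1}{r^2}\int_{z+r\bD}|f(w)|\,\Bigl(1-\tfrac{|w-z|^2}{r^2}\Bigr)^{\!\al}\dif\mu(w),
\]
which places $g$ under the hypotheses of Proposition~\ref{prop:weighted_mean_value_property}.

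With $g$ admissible, Proposition~\ref{prop:weighted_mean_value_property} gives
\[
f(z)=g(0)=\frac{\al+1}{\pi}\int_{\bD}g(\eta)\,R_{n-1}^{(\al,0)}(|\eta|^2)\,(1-|\eta|^2)^{\al}\,\dif\mu(\eta),
\]
and undoing the substitution $w=z+r\eta$ converts the right-hand side into
\[
\frac{\al+1}{\pi r^2}\int_{z+r\bD}f(w)\,R_{n-1}^{(\al,0)}\!\Bigl(\tfrac{|w-z|^2}{r^2}\Bigr)\Bigl(1-\tfrac{|w-z|^2}{r^2}\Bigr)^{\!\al}\dif\mu(w),
\]
which is~\eqref{eq:mean_value_property}. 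The prefactor $(\al+1)/(\pi r^2)$, the kernel $R_{n-1}^{(\al,0)}(|w-z|^2/r^2)$, and the weight $(1-|w-z|^2/r^2)^{\al}$ all emerge from the two Jacobians together with the $\al$-dependence already present in Proposition~\ref{prop:weighted_mean_value_property}; in particular nothing forces $\al=0$.

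The chain-rule step and the two changes of variables are routine. The point deserving care---and the main obstacle---is the integrability transfer when $-1<\al<0$, where the weight $(1-|w-z|^2/r^2)^{\al}$ is unbounded near the circle $|w-z|=r$, so the bare integrability assumption on $f$ need not control the weighted integral of $g$. For $\al\ge0$ the weight is $\le1$ and finiteness is automatic. For $-1<\al<0$ I would instead establish the identity first on each concentric disk $z+s\bD$ with $0<s<r$---where $f$ is smooth, hence bounded on the compact closure, so every hypothesis holds trivially---and then let $s\uparrow r$, passing the limit under the integral by dominated convergence exactly as in the proof of Proposition~\ref{prop:weighted_mean_value_property}. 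Producing an integrable dominating function for that limit, i.e.\ controlling the growth of $f$ near $|w-z|=r$, is where the real work lies.
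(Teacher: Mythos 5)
Your strategy coincides with the paper's own proof: pull $f$ back by the affine map $\varphi(\eta)\eqdef r\eta+z$, apply Proposition~\ref{prop:weighted_mean_value_property} to $f\circ\varphi$, and change variables back. However, the identity you actually derive carries the weight $\bigl(1-|w-z|^2/r^2\bigr)^{\al}$ in the integrand, and that is \emph{not} formula~\eqref{eq:mean_value_property}: the stated corollary has no such factor. The two coincide only when $\al=0$, and, contrary to your closing remark that ``nothing forces $\al=0$'', the absence of the weight does force it. Indeed, for $n=1$ one has $R_0^{(\al,0)}\equiv 1$, so by the classical area mean value property for analytic functions the right-hand side of~\eqref{eq:mean_value_property} equals $(\al+1)f(z)$, which differs from $f(z)$ whenever $\al\neq 0$. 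The corollary is precisely the $\al=0$ specialization of Proposition~\ref{prop:weighted_mean_value_property} --- this is how the paper introduces it (``For $\al=0$, Proposition~\ref{prop:weighted_mean_value_property} reduces to\dots'') and how it is used in the proof of Lemma~\ref{lem:evaluation_functionals_on_An_are_bounded}; the letters $\al$ surviving in the displayed formula are best read as leftover notation with $\al=0$ understood.

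Once this is recognized, the difficulty you single out as ``where the real work lies'' evaporates: for $\al=0$ the weight is identically $1$, the integrability hypothesis transfers verbatim under the substitution, and your argument is complete --- and is exactly the paper's one-line proof. The unresolved issue you flag (passing from bare integrability of $f$ to weighted integrability of $g$ when $-1<\al<0$) concerns only your over-generalized weighted statement, whose right-hand side is not even guaranteed to converge absolutely under the hypothesis $\int_{z+r\bD}|f|\,\dif\mu<+\infty$; it is not a gap in the proof of the corollary itself.
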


\begin{proof}
Denote by $\varphi$ the linear change of variables $\varphi(w)\eqdef rw+z$.
If $f\in\cA_n(z+r\bD)$, 
then $f\circ \varphi\in\cA_n(\bD)$. 
Applying \eqref{eq:weighted_mean_value_property} to $f\circ \varphi$, we obtain \eqref{eq:mean_value_property}.
\end{proof}

\subsection*{Weighted Bergman spaces of polyanalytic functions on general complex domains}

Given $n$ in $\bN$,
an open subset $\Om$ of $\bC$
and a continuous function
$W\colon\Om\to(0,+\infty)$,
we denote by $\cA_n^2(\Om,W)$
the space of $n$-analytic functions
belonging to $L^2(\Om,W)$
and provided with the norm of $L^2(\Om,W)$.
The mean value property~\eqref{eq:mean_value_property}
implies that the evaluation functionals
in $\cA_n^2(\Om,W)$ are bounded
(moreover, they are uniformly bounded on compacts),
and $\cA_n^2(\Om,W)$ is a RKHS.
Here are proofs of these facts.

\begin{lemma}\label{lem:evaluation_functionals_on_An_are_bounded}
Let $K$ be a compact subset of $\Om$.
Then there exists a number $C_{n,W,K}>0$
such that for every $f$ in $\cA_n^2(\Om,W)$
and every $z$ in $K$,
\begin{equation}\label{eq:evaluation_functionals_on_An_are_bounded}
|f(z)|\le C_{n,W,K} \|f\|_{\cA_n^2(\Om,W)}.
\end{equation}
\end{lemma}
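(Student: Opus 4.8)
The plan is to reduce the general bound to the mean value property \eqref{eq:mean_value_property}, which expresses $f(z)$ as an integral of $f$ against a fixed kernel over a small disk centered at $z$. The key point is that \eqref{eq:mean_value_property} holds on any disk $z+r\bD$ contained in $\Om$ on which $f$ is integrable, and the integrand is $f(w)$ times a bounded function of $w$ (the polynomial $R_{n-1}^{(\al,0)}$ evaluated on $[0,1)$, which is continuous hence bounded on the closed interval). First I would fix the compact $K\subset\Om$ and choose a radius $r>0$ small enough that the $r$-neighborhood $K_r\eqdef\{w\in\bC\colon\operatorname{dist}(w,K)\le r\}$ is still contained in $\Om$; this is possible because $K$ is compact and $\Om$ is open, so the distance from $K$ to $\bC\setminus\Om$ is strictly positive.

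Next, for each $z\in K$ I would apply \eqref{eq:mean_value_property} on the disk $z+r\bD\subset\Om$, giving
\[
f(z)=\frac{\al+1}{\pi r^2}\int_{z+r\bD} f(w)\,R_{n-1}^{(\al,0)}\!\left(\frac{|w-z|^2}{r^2}\right)\dif\mu(w).
\]
Setting $M_r\eqdef\max_{t\in[0,1]}\bigl|R_{n-1}^{(\al,0)}(t)\bigr|$ and applying the triangle inequality followed by Cauchy--Schwarz in $L^2(z+r\bD,\mu)$, I would bound
\[
|f(z)|\le\frac{(\al+1)M_r}{\pi r^2}\int_{z+r\bD}|f(w)|\,\dif\mu(w)
\le\frac{(\al+1)M_r}{\pi r^2}\,\mu(z+r\bD)^{1/2}
\left(\int_{z+r\bD}|f(w)|^2\,\dif\mu(w)\right)^{1/2}.
\]
Since $\mu(z+r\bD)=\pi r^2$, the prefactor simplifies to $(\al+1)M_r/(\sqrt{\pi}\,r)$.

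The final step is to replace the $L^2$ norm with respect to $\mu$ over the small disk by the norm in $\cA_n^2(\Om,W)$. Here I would use that $W$ is continuous and strictly positive, hence bounded below by a positive constant $c_r\eqdef\min_{w\in K_r}W(w)>0$ on the compact set $K_r\supseteq z+r\bD$; therefore $|f(w)|^2\le c_r^{-1}|f(w)|^2 W(w)$ pointwise, and enlarging the domain of integration from $z+r\bD$ to $\Om$ gives $\int_{z+r\bD}|f|^2\,\dif\mu\le c_r^{-1}\|f\|_{\cA_n^2(\Om,W)}^2$. Combining the estimates yields \eqref{eq:evaluation_functionals_on_An_are_bounded} with the constant $C_{n,W,K}\eqdef (\al+1)M_r\bigl(\sqrt{\pi}\,r\sqrt{c_r}\bigr)^{-1}$, which depends only on $n$, $W$, and $K$ (through the choice of $r$ and the bounds $M_r$, $c_r$), as required; note this constant is uniform over $z\in K$, giving the claimed uniform boundedness on compacts. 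The only mild subtlety, rather than a real obstacle, is checking that $f$ is genuinely $\mu$-integrable on each $z+r\bD$ so that the mean value property applies; this follows because $f\in L^2(\Om,W)$ and $W$ is bounded below on the relevant compact, so $f\in L^2(z+r\bD,\mu)\subset L^1(z+r\bD,\mu)$ on the bounded-measure disk.
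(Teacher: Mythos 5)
Your proposal is correct and follows essentially the same route as the paper: reduce to the mean value property \eqref{eq:mean_value_property} on a small disk $z+r\bD$ whose radius is controlled by the distance from $K$ to $\bC\setminus\Om$, bound the kernel $R_{n-1}^{(\al,0)}$ by its maximum on $[0,1]$, apply Cauchy--Schwarz, and use the positive lower bound of $W$ on a compact neighborhood of $K$ to pass to the weighted norm. The only cosmetic difference is the order in which Cauchy--Schwarz and the weight bound are applied (the paper inserts $\sqrt{W}\cdot\frac{1}{\sqrt{W}}$ before Cauchy--Schwarz, you apply Cauchy--Schwarz first), and your explicit verification that $f\in L^1(z+r\bD,\mu)$ is a welcome detail the paper leaves implicit.
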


\begin{proof}
Let $r_1$ be the distance from $K$ to $\bC\setminus\Om$.
Since $K$ is compact and $\bC\setminus\Om$ is closed,
$r_1>0$.
Put $r\eqdef\min\{r_1/2,1\}$,
$K_1\eqdef\{w\in\bC\colon\ d(w,K)\le r\}$,
\[
C_1
\eqdef
\left(\max_{0\le t\le 1}|R_{n-1}^{(\al,0)}(t)|\right)
\left(\max_{w\in K_1}\frac{1}{\sqrt{W(w)}}\right).
\]
For every $z$ in $K$,
we estimate $|f(z)|$ from above
applying~\eqref{eq:mean_value_property}
and Schwarz inequality:
\begin{align*}
|f(z)|
&\le
\frac{1}{\pi r^2}
\int_{z+r\bD}
|f(w)|\,\left|R_{n-1}^{(\al,0)}\left(\frac{|w-z|^2}{r^2}\right)\right|\,\dif\mu(w)
\\
&\le
\frac{C_1}{\pi r^2}
\int_{z+r\bD}|f(w)|\,\sqrt{W(w)}\,\dif\mu(w)
\\
&\le\frac{C_1}{\pi r^2}
\left(\int_{z+r\bD}|f(w)|^2 W(w)\,\dif\mu(w)\right)^{1/2}
\left(\int_{z+r\bD}1\,\dif\mu(w)\right)^{1/2}
\\
&\le\frac{C_1}{\sqrt{\pi r^2}}\,\|f\|_{\cA_n^2(\Om,W)}.
\end{align*}
So, \eqref{eq:evaluation_functionals_on_An_are_bounded}
is fulfilled with $C_{n,W,K}=\frac{C_1}{\sqrt{\pi r^2}}$.
\end{proof}

\begin{proposition}\label{prop:poly_Bergman_is_RKHS}
$\cA_n^2(\Om,W)$ is a RKHS.
\end{proposition}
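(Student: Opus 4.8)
The plan is to combine the two defining features of a reproducing kernel Hilbert space: completeness of the inner-product space and boundedness of every evaluation functional. The second feature is already in hand, since Lemma~\ref{lem:evaluation_functionals_on_An_are_bounded} applied with $K=\{z\}$ shows that for each fixed $z\in\Om$ the functional $f\mapsto f(z)$ is bounded on $\cA_n^2(\Om,W)$. Thus the only real work is to verify that $\cA_n^2(\Om,W)$ is complete, i.e.\ that it is a \emph{closed} subspace of $L^2(\Om,W)$; once this is established, the reproducing kernel is produced automatically by the Riesz representation theorem.

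To prove completeness, first I would take a Cauchy sequence $(f_m)$ in $\cA_n^2(\Om,W)$. Since $L^2(\Om,W)$ is complete, $f_m\to f$ in the $L^2(\Om,W)$-norm for some $f\in L^2(\Om,W)$, and the task is to show that $f$ has an $n$-analytic representative. Applying Lemma~\ref{lem:evaluation_functionals_on_An_are_bounded} to the differences $f_m-f_j$ (which again lie in $\cA_n^2(\Om,W)$) gives, for every compact $K\subseteq\Om$, the estimate $\max_{z\in K}|f_m(z)-f_j(z)|\le C_{n,W,K}\|f_m-f_j\|_{\cA_n^2(\Om,W)}$. Hence $(f_m)$ is uniformly Cauchy on every compact subset of $\Om$ and therefore converges, uniformly on compact sets, to a continuous function $g$ on $\Om$.

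The main obstacle is to argue that this locally uniform limit $g$ is again $n$-analytic and agrees with $f$ almost everywhere. For the analyticity I would use the fact that $n$-analytic functions are precisely the distributional solutions of $\partial^n f/\partial\conjz^n=0$; since locally uniform convergence implies convergence in the sense of distributions, the limit $g$ satisfies the same equation, and by elliptic regularity (the operator $\partial^n/\partial\conjz^n$ is elliptic) $g$ is a genuine $n$-analytic function. Equivalently, one may pass to the local representation~\eqref{aux:poly_expansion} and observe that the coefficients depend continuously on $f_m$ under locally uniform convergence. Finally, $L^2$-convergence yields a subsequence converging to $f$ almost everywhere, while the whole sequence converges pointwise to $g$; hence $g=f$ a.e., so $f$ admits the $n$-analytic representative $g$ and lies in $\cA_n^2(\Om,W)$. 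This shows the space is closed in $L^2(\Om,W)$, hence complete.

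With completeness in place, $\cA_n^2(\Om,W)$ is a Hilbert space on which every evaluation functional is bounded; by the Riesz representation theorem there is, for each $z\in\Om$, a unique $K_z\in\cA_n^2(\Om,W)$ with $f(z)=\langle f,K_z\rangle$ for all $f$. These elements assemble into the reproducing kernel $K(z,w)\eqdef\overline{K_z(w)}$, so $\cA_n^2(\Om,W)$ is a RKHS.
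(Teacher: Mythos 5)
Your proof is correct and follows essentially the same route as the paper: use Lemma~\ref{lem:evaluation_functionals_on_An_are_bounded} to upgrade $L^2$-Cauchy to locally uniform convergence, identify the locally uniform limit as an $n$-analytic representative of the $L^2$-limit, and then invoke Riesz representation. The only difference is that where the paper cites \cite[Corollary~1.8]{Balk1991} for the fact that a locally uniform limit of polyanalytic functions is polyanalytic, you justify this step yourself via distributional convergence and ellipticity of $\partial^n/\partial\conjz^n$, which is a valid substitute.
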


\begin{proof}
Given a Cauchy sequence in $\cA_n^2(\Om,W)$,
for every compact $K$
it converges uniformly on $K$
by Lemma~\ref{lem:evaluation_functionals_on_An_are_bounded}.
The pointwise limit of this sequence
is also polyanalytic
by~\cite[Corollary~1.8]{Balk1991},
and it coincides a.e. with the limit
in $L^2(\Om,W)$.
Lemma~\ref{lem:evaluation_functionals_on_An_are_bounded}
also assures the boundedness of the evalution functionals and thereby the existence of the reproducing kernel.
See similar proofs in~\cite[Proposition~3.3]{MaximenkoTelleriaRomero2020}.
\end{proof}

Denote by $\cA_{(n)}^2(\Om,W)$
the orthogonal complement of $\cA_{n-1}^2(\Om,W)$ in $\cA_n^2(\Om,W)$.

\begin{corollary}\label{cor:true_poly_is_RKHS}
$\cA_{(n)}^2(\Om,W)$ is a RKHS.
\end{corollary}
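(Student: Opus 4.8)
The goal is to show that the orthogonal complement $\cA_{(n)}^2(\Om,W)$ of $\cA_{n-1}^2(\Om,W)$ inside $\cA_n^2(\Om,W)$ is again a reproducing kernel Hilbert space. The plan is to deduce this directly from the fact, already established in Proposition~\ref{prop:poly_Bergman_is_RKHS}, that $\cA_n^2(\Om,W)$ itself is an RKHS, together with the general principle that a closed subspace of an RKHS is again an RKHS.

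First I would observe that $\cA_{n-1}^2(\Om,W)$ is a closed subspace of $\cA_n^2(\Om,W)$: it is the intersection of $\cA_n^2(\Om,W)$ with the space of $(n-1)$-analytic functions, and closedness follows because convergence in $L^2(\Om,W)$ norm implies locally uniform convergence by Lemma~\ref{lem:evaluation_functionals_on_An_are_bounded}, so the limit of $(n-1)$-analytic functions is again $(n-1)$-analytic (again invoking the stability of polyanalyticity under locally uniform limits, as in~\cite[Corollary~1.8]{Balk1991}). Consequently its orthogonal complement $\cA_{(n)}^2(\Om,W)$ is a closed subspace of the Hilbert space $\cA_n^2(\Om,W)$, hence a Hilbert space in its own right under the inherited inner product.

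Next I would verify that the evaluation functionals remain bounded on this subspace. For each $z\in\Om$, the evaluation functional $f\mapsto f(z)$ is bounded on all of $\cA_n^2(\Om,W)$ by Lemma~\ref{lem:evaluation_functionals_on_An_are_bounded}; its restriction to the closed subspace $\cA_{(n)}^2(\Om,W)$ is therefore also bounded, with the same (or smaller) norm bound $C_{n,W,K}$ on each compact $K$. By the Riesz representation theorem applied within the Hilbert space $\cA_{(n)}^2(\Om,W)$, each such functional is represented by a unique element, and these representing elements assemble into a reproducing kernel for $\cA_{(n)}^2(\Om,W)$. This establishes the RKHS property.

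I do not expect a genuine obstacle here, since the statement is a soft corollary of Proposition~\ref{prop:poly_Bergman_is_RKHS}; the only point requiring a little care is confirming that $\cA_{(n)}^2(\Om,W)$, being an orthogonal complement, is indeed \emph{closed} and thus a complete inner product space, which is automatic for orthogonal complements in a Hilbert space. The reproducing kernel of the complement can in fact be written explicitly as $K_{(n)} = K_n - K_{n-1}$, the difference of the reproducing kernels of $\cA_n^2(\Om,W)$ and $\cA_{n-1}^2(\Om,W)$, but for the mere assertion that $\cA_{(n)}^2(\Om,W)$ is an RKHS this explicit formula is not needed and I would leave it as a remark rather than include it in the proof.
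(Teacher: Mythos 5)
Your argument is correct and is essentially the one the paper intends: the paper states this as an immediate corollary of Proposition~\ref{prop:poly_Bergman_is_RKHS}, relying on the standard fact that a closed subspace of a RKHS is again a RKHS, with evaluation functionals restricting boundedly and represented via Riesz within the subspace. Your additional care in checking that $\cA_{n-1}^2(\Om,W)$ is closed in $\cA_n^2(\Om,W)$ (via Lemma~\ref{lem:evaluation_functionals_on_An_are_bounded} and the stability of polyanalyticity under locally uniform limits) is exactly the detail the paper leaves implicit.
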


\section{Weighted Bergman spaces of polyanalytic functions on the unit disk}
\label{sec:spaces}

In the rest of the paper,
we suppose that $n\in\bN$ and $\al>-1$.
Given $z$ in $\bD$,
denote by $K^{(\al)}_{n,z}$
the reproducing kernel of $\cA_n^2(\bD,\mu_\al)$
at the point $z$
and by $K^{(\al)}_{(n),z}$
the reproducing kernel of $\cA_{(n)}^2(\bD,\mu_\al)$
at the point $z$.
Hachadi and Youssfi~\cite{HachadiYoussfi2019}
computed the reproducing kernel of $\cA_n^2(\bD,\mu_\al)$:
\begin{equation}\label{eq:RK_weighted_poly_Bergman_on_disk}
K^{(\al)}_{n,z}(w)
=\frac{(1-\conjw z)^{n-1}}{(1-\conjz w)^{n+1}}
R_{n-1}^{(\al,0)}\left(\left|\frac{z-w}{1-\conjz w}\right|^2\right).
\end{equation}
Their method uses~\eqref{eq:weighted_mean_value_property}
and a generalization of the unitary operator constructed by Pessoa~\cite{Pessoa2014}.
Formula~\eqref{eq:RK_weighted_poly_Bergman_on_disk}
implies an exact expression for the norm of $K^{(\al)}_{n,z}$, which is also the norm of the evaluation functional at the point $z$:
\begin{equation}\label{eq:norm_of_reproducing_kernel}
\|K^{(\al)}_{n,z}\|
=\sqrt{(n+\al)\binom{n+\al-1}{n-1}}\:
\frac{1}{1-|z|^2}.
\end{equation}
Obviously, the reproducing kernel of $\cA_{(n)}^2(\bD,\mu_\al)$ can be written as
\begin{equation}\label{eq:RK_weighted_true_poly_Bergman_on_disk}
K^{(\al)}_{(n),z}(w)
=K^{(\al)}_{n,z}(w)-K^{(\al)}_{n-1,z}(w).
\end{equation}
Unfortunately, we are unable to obtain
a simpler formula for $K^{(\al)}_{(n),z}$.

\subsection*{Orthonormal basis in $\cA_n^2(\bD,\mu_\al)$}

\begin{proposition}\label{prop:basis_in_An}
The family $(\basic{\al}{p}{q})_{p\in\bNz,q<n}$
is an orthonormal basis of $\cA_n^2(\bD,\mu_\al)$.
\end{proposition}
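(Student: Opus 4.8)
The plan is to prove two things: first, that each $\basic{\al}{p}{q}$ with $q<n$ is genuinely $n$-analytic and lies in $\cA_n^2(\bD,\mu_\al)$; second, that this family is complete in $\cA_n^2(\bD,\mu_\al)$. Orthonormality is already free from Proposition~\ref{prop:b_is_orthonormal_basis_in_L2}, since the proposed family is a subfamily of the full orthonormal basis $(\basic{\al}{p}{q})_{p,q\in\bNz}$ of $L^2(\bD,\mu_\al)$. So the only real work is membership and completeness.

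For membership, I would use the explicit polynomial form~\eqref{eq:b_pq_in_shifted_Jacobi_explicit3_form}: each $\basic{\al}{p}{q}$ is a finite linear combination of monomials $m_{p-k,q-k}=z^{p-k}\conjz^{q-k}$ with $0\le k\le\min\{p,q\}$. The power of $\conjz$ appearing is $q-k\le q<n$, so $\basic{\al}{p}{q}$ is a polynomial of degree at most $n-1$ in $\conjz$, hence $n$-analytic; and being a polynomial it is bounded on $\bD$, so it lies in $\cA_n^2(\bD,\mu_\al)$. Conversely, the same formula~\eqref{eq:m_in_terms_of_b_and_younger_brothers} shows that every monomial $m_{p,q}$ with $q<n$ is a linear combination of basic functions $\basic{\al}{p-\nu}{q-\nu}$, all of which again have second index $q-\nu<n$. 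Thus $\lin\{\basic{\al}{p}{q}\colon p\in\bNz,\ q<n\}=\lin\{m_{p,q}\colon p\in\bNz,\ q<n\}$, which is exactly the set $\cP_n$ of polynomial $n$-analytic functions (polynomials of degree $<n$ in $\conjz$ with polynomial analytic coefficients).

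For completeness, the key step is to show that $\cP_n$ is dense in $\cA_n^2(\bD,\mu_\al)$. I would argue from the representation~\eqref{aux:poly_expansion}: any $f\in\cA_n^2(\bD,\mu_\al)$ has the form $f(z)=\sum_{k=0}^{n-1}\sum_{j=0}^{\infty}\la_{j,k}z^j\conjz^k$. Since $\cA_n^2(\bD,\mu_\al)$ is a RKHS (Proposition~\ref{prop:poly_Bergman_is_RKHS}) with evaluation functionals uniformly bounded on compact subsets (Lemma~\ref{lem:evaluation_functionals_on_An_are_bounded}), convergence in norm forces locally uniform convergence, and the natural partial sums $f_N(z)=\sum_{k=0}^{n-1}\sum_{j=0}^{N}\la_{j,k}z^j\conjz^k$ lie in $\cP_n$. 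The cleanest way to conclude is to observe that the orthogonal complement of $\clos(\cP_n)$ inside $\cA_n^2(\bD,\mu_\al)$ is trivial: if $g\in\cA_n^2(\bD,\mu_\al)$ is orthogonal to every $m_{p,q}$ with $q<n$, then writing $g$ via~\eqref{aux:poly_expansion} and pairing against the monomials, the inner products~\eqref{eq:monomial_inner_product} force all coefficients of $g$ to vanish, so $g=0$.

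The main obstacle will be making the density argument rigorous, since naive termwise convergence of the double series~\eqref{aux:poly_expansion} in the $L^2$-norm is not automatic. The safest route is the orthogonality argument just sketched: rather than approximating $f$ directly, I would take an arbitrary $g\in\cA_n^2(\bD,\mu_\al)$ orthogonal to the whole family $(\basic{\al}{p}{q})_{p\in\bNz,q<n}$, use the RKHS structure to justify expanding $g$ as in~\eqref{aux:poly_expansion} with coefficients recoverable as inner products against suitable basic functions (via Remark~\ref{rem:products_b_m}), and deduce $g=0$. This sidesteps delicate convergence questions about the series and reduces completeness to the already-established orthonormality of the full basis in $L^2(\bD,\mu_\al)$ together with the characterization of the $\conjz$-degree.
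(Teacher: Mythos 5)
Your proposal is correct and takes essentially the same route as the paper: orthonormality is inherited from Proposition~\ref{prop:b_is_orthonormal_basis_in_L2}, and completeness is proved by taking $f\in\cA_n^2(\bD,\mu_\al)$ orthogonal to the family, expanding it via~\eqref{aux:poly_expansion}, and using the upper-triangular structure of the inner products $\langle m_{\xi+k,k},\basic{\al}{\xi+q}{q}\rangle$ from Remark~\ref{rem:products_b_m} to force all coefficients to vanish. The interchange-of-limits issue you flag is resolved in the paper exactly along the lines you suggest, by first computing $\int_{r\bD} f\,\overline{\basic{\al}{p}{q}}\,\dif\mu_\al$ termwise (where the series converges uniformly) and then letting $r\to 1$ by dominated convergence.
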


\begin{proof}
Is clear that the family is contained in $\cA_n^2(\bD,\mu_\al)$,
and by Proposition~\ref{prop:b_is_orthonormal_basis_in_L2} is orthonormal.
Using ideas of
Ramazanov~\cite[proof of Theorem~2]{Ramazanov1999} we will show the total property.
Suppose that $f\in\cA_n^2(\bD,\mu_\al)$
and $\langle f,\basic{\al}{p}{q}\rangle=0$
for every $p$ in $\bNz$ and $q<n$.
For $r>0$,
using expansion~\eqref{aux:poly_expansion}
and the orthogonality of the Fourier basis on $\bT$, we easily obtain
\[
\int_{r\bD} f\,\overline{\basic{\al}{p}{q}}\,\dif\mu_\al
=\sum_{k=0}^{n-1} \la_{k+p-q,k}
\int_{r\bD} m_{k+p-q,k} \overline{\basic{\al}{p}{q}}\,\dif\mu_\al.
\]
The dominated convergence theorem
allows us to pass to integrals over $\bD$, because $f\,\overline{\basic{\al}{p}{q}}$ and $m_{k+p-q,k}\,\overline{\basic{\al}{p}{q}}$
belong to $L^1(\bD,\mu_\al)$.
Now the assumption $f\perp \basic{\al}{p}{q}=0$ yields
\begin{equation}\label{eq:system_of_equations_on_coefs}
\sum_{k=0}^{n-1} 
\langle m_{k+p-q,k}, \basic{\al}{p}{q}\rangle
\la_{k+p-q,k}
=0
\qquad(p\in\bNz,\ 0\le q<n).
\end{equation}
For a fixed $\xi$ in $\bZ$
with $\xi>-n$,
put $s=\min\{n,n+\xi\}$.
The vector
$[\la_{k+\xi,k}]_{k=\max\{0,-\xi\}}^{n-1}$ satisfies
the homogeneous linear system~\eqref{eq:system_of_equations_on_coefs}
with the $s\times s$ matrix
\[
\left[
\langle
m_{\xi+k,k},\basic{\al}{\xi+q}{q}
\rangle
\right]_{q,k=\max\{0,-\xi\}}^{n-1}.
\]
By \eqref{eq:product_b_m},
this is an upper triangular matrix
with nonzero diagonal entries,
hence the unique solution of~\eqref{eq:system_of_equations_on_coefs} is zero.
\end{proof}

\begin{corollary}\label{cor:basis_in_true_polyanalytic}
The sequence $(\basic{\al}{p}{n-1})_{p\in\bNz}$
is an orthonormal basis of $\cA_{(n)}^2(\bD,\mu_\al)$.
\end{corollary}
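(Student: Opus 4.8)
The plan is to read off the desired basis directly from the orthonormal basis of $\cA_n^2(\bD,\mu_\al)$ furnished by Proposition~\ref{prop:basis_in_An}, simply by isolating the ``last column'' $q=n-1$. Concretely, Proposition~\ref{prop:basis_in_An} tells us that $(\basic{\al}{p}{q})_{p\in\bNz,\,0\le q\le n-1}$ is an orthonormal basis of $\cA_n^2(\bD,\mu_\al)$, and the same proposition applied with $n-1$ in place of $n$ tells us that $(\basic{\al}{p}{q})_{p\in\bNz,\,0\le q\le n-2}$ is an orthonormal basis of $\cA_{n-1}^2(\bD,\mu_\al)$. The whole argument is then bookkeeping on the index $q$.

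First I would observe that the sequence $(\basic{\al}{p}{n-1})_{p\in\bNz}$ is orthonormal, since it is a subfamily of the orthonormal family $(\basic{\al}{p}{q})_{p,q\in\bNz}$ from Proposition~\ref{prop:b_is_orthonormal_basis_in_L2}. Next, each $\basic{\al}{p}{n-1}$ lies in $\cA_n^2(\bD,\mu_\al)$ and, again by orthonormality, is orthogonal to every $\basic{\al}{j}{k}$ with $k\le n-2$; since those functions form a basis of $\cA_{n-1}^2(\bD,\mu_\al)$, the function $\basic{\al}{p}{n-1}$ is orthogonal to all of $\cA_{n-1}^2(\bD,\mu_\al)$ and therefore belongs to its orthogonal complement $\cA_{(n)}^2(\bD,\mu_\al)$.

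It then remains to prove completeness in $\cA_{(n)}^2(\bD,\mu_\al)$. I would take $f\in\cA_{(n)}^2(\bD,\mu_\al)\subseteq\cA_n^2(\bD,\mu_\al)$ and expand it in the orthonormal basis of $\cA_n^2(\bD,\mu_\al)$, writing $f=\sum_{q=0}^{n-1}\sum_{p\in\bNz}\langle f,\basic{\al}{p}{q}\rangle\,\basic{\al}{p}{q}$. Because $f\perp\cA_{n-1}^2(\bD,\mu_\al)$ and $\basic{\al}{p}{q}\in\cA_{n-1}^2(\bD,\mu_\al)$ whenever $q\le n-2$, all coefficients with $q\le n-2$ vanish, leaving $f=\sum_{p\in\bNz}\langle f,\basic{\al}{p}{n-1}\rangle\,\basic{\al}{p}{n-1}$. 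Hence $f$ lies in the closed linear span of $(\basic{\al}{p}{n-1})_{p\in\bNz}$, which establishes completeness.

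There is no serious obstacle here: the statement is the standard fact that an orthonormal basis of a Hilbert space restricts to an orthonormal basis of the subspace spanned by a distinguished subfamily, made concrete by the explicit descriptions of the $\cA_n^2(\bD,\mu_\al)$ and $\cA_{n-1}^2(\bD,\mu_\al)$ bases. The only point requiring minor care is the vanishing of the coefficients with $q\le n-2$, which follows at once from continuity of the inner product and the convergence of the orthonormal expansion of $f$.
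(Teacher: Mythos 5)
Your proof is correct and is exactly the argument the paper intends: the corollary is stated without proof precisely because it follows from Proposition~\ref{prop:basis_in_An} (applied to both $n$ and $n-1$) together with the definition of $\cA_{(n)}^2(\bD,\mu_\al)$ as the orthogonal complement of $\cA_{n-1}^2(\bD,\mu_\al)$ in $\cA_n^2(\bD,\mu_\al)$, which is the bookkeeping you carry out. No gaps.
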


We denote by $P_n^{(\al)}$ and $P_{(n)}^{(\al)}$
the orthogonal projections
acting in $L^2(\bD,\mu_\al)$,
whose images are $\cA_n^2(\bD,\mu_\al)$ and $\cA_{(n)}^2(\bD,\mu_\al)$, respectively.
They can be computed in terms
of the corresponding reproducing kernels:
\[
(P_n^{(\al)} f)(z)
=\langle f,K^{(\al)}_{n,z} \rangle,\qquad
(P_{(n)}^{(\al)}f)(z)
=\langle f,K^{(\al)}_{(n),z} \rangle.
\]
For example,
$(\basic{\al}{p}{q})_{p\in\bNz,q<4}$
is an orthonormal basis of $\cA_{4}^2(\bD,\mu_\al)$,
and $(\basic{\al}{p}{3})_{p\in\bNz}$
is an orthonormal basis of $\cA_{(4)}^2(\bD,\mu_\al)$.
\[
\begin{array}{cccccc}
\cellcolor{lightblue} \basic{\al}{0}{0} &
\cellcolor{lightblue} \basic{\al}{0}{1} &
\cellcolor{lightblue} \basic{\al}{0}{2} &
\cellcolor{lightblue} \basic{\al}{0}{3} &
\basic{\al}{0}{4} & \ldots
\\[1ex]
\cellcolor{lightblue} \basic{\al}{1}{0} &
\cellcolor{lightblue} \basic{\al}{1}{1} &
\cellcolor{lightblue} \basic{\al}{1}{2} &
\cellcolor{lightblue} \basic{\al}{1}{3} &
\basic{\al}{1}{4} & \ldots
\\[1ex]
\cellcolor{lightblue} \basic{\al}{2}{0} &
\cellcolor{lightblue} \basic{\al}{2}{1} &
\cellcolor{lightblue} \basic{\al}{2}{2} &
\cellcolor{lightblue} \basic{\al}{2}{3} &
\basic{\al}{2}{4} & \ldots
\\[1ex]
\cellcolor{lightblue} \basic{\al}{3}{0} &
\cellcolor{lightblue} \basic{\al}{3}{1} &
\cellcolor{lightblue} \basic{\al}{3}{2} &
\cellcolor{lightblue} \basic{\al}{3}{3} &
\basic{\al}{3}{4} & \ldots
\\[1ex]
\cellcolor{lightblue} \vdots &
\cellcolor{lightblue} \vdots &
\cellcolor{lightblue} \vdots &
\cellcolor{lightblue} \vdots &
\vdots & \ddots
\end{array}
\qquad\qquad
\begin{array}{cccccc}
\basic{\al}{0}{0} & \basic{\al}{0}{1} & \basic{\al}{0}{2} &
\cellcolor{pink!60!} \basic{\al}{0}{3} & \basic{\al}{0}{4} & \ldots
\\[1ex]
\basic{\al}{1}{0} & \basic{\al}{1}{1} & \basic{\al}{1}{2} &
\cellcolor{pink!60!} \basic{\al}{1}{3} & \basic{\al}{1}{4} & \ldots
\\[1ex]
\basic{\al}{2}{0} & \basic{\al}{2}{1} & \basic{\al}{2}{2} &
\cellcolor{pink!60!} \basic{\al}{2}{3} & \basic{\al}{2}{4} & \ldots
\\[1ex]
\basic{\al}{3}{0} & \basic{\al}{3}{1} & \basic{\al}{3}{2} &
\cellcolor{pink!60!} \basic{\al}{3}{3} & \basic{\al}{3}{4} & \ldots
\\[1ex]
\vdots & \vdots & \vdots &
\cellcolor{pink!60!} \vdots & \vdots & \ddots
\end{array}
\]

\subsection*{Decomposition of $\boldsymbol{\cA_n^2(\bD,\mu_\al)}$
into subspaces corresponding to different ``frequences''}

We will use the following elementary fact about orthonormal bases in Hilbert spaces.

\begin{proposition}\label{prop:basis_of_intersection}
Let $H_1$ be a Hilbert space and $\cB_1\subseteq H_1$ be an orthonormal basis of $H_1$
(in this proposition we treat orthonormal bases like sets rather than families).
Suppose that $\cB_2$ and $\cB_3$ are some subsets of $\cB_1$. Denote by $H_2$ and $H_3$ the closed subspaces of $H_1$ generated by $\cB_2$ and $\cB_3$, respectively.
Then $\cB_2\cap\cB_3$ is an orthonormal basis of $H_2\cap H_3$.
\end{proposition}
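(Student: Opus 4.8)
The plan is to prove the two inclusions between $\cB_2\cap\cB_3$ (as a set of orthonormal vectors) and an orthonormal basis of $H_2\cap H_3$. First I would observe that $\cB_2\cap\cB_3$ is automatically an orthonormal \emph{family}, since it is a subset of the orthonormal basis $\cB_1$. Moreover, every vector in $\cB_2\cap\cB_3$ lies in $H_2$ (being in $\cB_2$) and in $H_3$ (being in $\cB_3$), hence in $H_2\cap H_3$. So the closed span of $\cB_2\cap\cB_3$ is contained in $H_2\cap H_3$, and it remains to show the reverse: that $\cB_2\cap\cB_3$ is \emph{total} in $H_2\cap H_3$, i.e.\ that every $x\in H_2\cap H_3$ lies in $\clos(\lin(\cB_2\cap\cB_3))$.

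The key step is an expansion argument exploiting that $\cB_1$ is an orthonormal basis of the ambient space $H_1$. Take any $x\in H_2\cap H_3$ and expand it in the basis $\cB_1$, writing $x=\sum_{e\in\cB_1}\langle x,e\rangle e$. Since $x\in H_2=\clos(\lin\cB_2)$, the Fourier coefficients $\langle x,e\rangle$ must vanish for every $e\in\cB_1\setminus\cB_2$; the clean way to see this is that the orthogonal projection onto $H_2$ acts on basis vectors by keeping those in $\cB_2$ and killing those in $\cB_1\setminus\cB_2$, and $x$ is fixed by this projection. Symmetrically, membership $x\in H_3$ forces $\langle x,e\rangle=0$ for every $e\in\cB_1\setminus\cB_3$. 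Combining the two conditions, the only surviving coefficients are those indexed by $e\in\cB_2\cap\cB_3$, so the expansion of $x$ collapses to $x=\sum_{e\in\cB_2\cap\cB_3}\langle x,e\rangle e$, which exhibits $x$ as a limit of finite linear combinations of elements of $\cB_2\cap\cB_3$. This gives $x\in\clos(\lin(\cB_2\cap\cB_3))$ and completes the totality.

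I expect the main obstacle to be the careful justification that $x\in H_2$ implies the vanishing of the coefficients outside $\cB_2$. The subtlety is that $H_2$ is defined as the \emph{closed} span of $\cB_2$, not just the algebraic span, so one cannot simply write $x$ as a finite combination. The cleanest route is to note that $\cB_2$ is itself an orthonormal basis of $H_2$ (being an orthonormal subset of $\cB_1$ whose closed span is $H_2$ by definition), so by Parseval in $H_2$ we have $\|x\|^2=\sum_{e\in\cB_2}|\langle x,e\rangle|^2$; comparing with Parseval in $H_1$, namely $\|x\|^2=\sum_{e\in\cB_1}|\langle x,e\rangle|^2$, forces $\sum_{e\in\cB_1\setminus\cB_2}|\langle x,e\rangle|^2=0$, hence $\langle x,e\rangle=0$ for all $e\in\cB_1\setminus\cB_2$. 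The identical argument with $\cB_3$ handles the second family of coefficients, and no continuity or density estimate beyond these two Parseval identities is needed.
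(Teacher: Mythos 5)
Your argument is correct and complete: the Parseval comparison between the expansion of $x$ in $\cB_1$ and in $\cB_2$ is exactly the right way to justify that the coefficients outside $\cB_2$ vanish, and the rest follows. Note that the paper states this proposition as an ``elementary fact'' and omits the proof entirely, so there is nothing to compare against; your write-up supplies the standard argument that the authors left implicit.
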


Applying Proposition~\ref{prop:basis_of_intersection}
to the Hilbert space $L^2(\bD,\mu_\al)$
and thinking in terms of orthonormal bases
(see Propositions~\ref{prop:b_is_orthonormal_basis_in_L2}, \ref{prop:basis_in_An},
and Corollaries~\ref{cor:basis_of_truncated_FreqSubspace}, \ref{cor:basis_of_FreqSubspace}),
we easily find the intersection of $\FreqSubspace_\xi^{(\al)}$ and $\cA_n^2(\bD,\mu_\al)$:
\begin{equation}\label{eq:truncated_freqsubspace_as_intersection}
\FreqSubspace_\xi^{(\al)}
\cap\cA_n^{2}(\bD,\mu_\al)
=\begin{cases}
\FreqSubspace_{\xi,\min\{n,n+\xi\}}^{(\al)}, & \xi\ge -n+1; \\
\{0\}, & \xi<-n+1.
\end{cases}
\end{equation}
Here is a description of the subspaces $\FreqSubspace^{(\al)}_{\xi,m}$
in terms of the polar coordinates.

\begin{proposition}\label{prop:truncated_freqsubspaces_in_polar_coordinates}
For every $\xi$ in $\bZ$ and every $s$ in $\bN$,
the space $\FreqSubspace_{\xi,s}^{(\al)}$
consists of all functions of the form
\[
f(r\tau)
= \tau^\xi r^{|\xi|} Q(r^2)\qquad
(r\ge0,\ \tau\in\bT),
\]
where $Q$ is a polynomial of degree $\le s-1$.
Moreover,
\[
\|f\|=\|Q\|_{L^2([0,1),(\al+1)(1-t)^{\al}t^{|\xi|}\,\dif{}t)}.
\]
\end{proposition}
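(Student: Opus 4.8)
The plan is to characterize $\FreqSubspace_{\xi,s}^{(\al)}$ explicitly by translating the spanning monomials into polar coordinates and recognizing the resulting span. By definition~\eqref{def:fiber_subspace}, $\FreqSubspace_{\xi,s}^{(\al)}$ is spanned by the monomials $m_{p,q}$ with $p-q=\xi$ and $\min\{p,q\}<s$. First I would write each such monomial in polar form. For $\xi\ge 0$ we have $p=q+\xi$, so $m_{q+\xi,q}(r\tau)=\tau^{\xi}r^{2q+\xi}=\tau^{\xi}r^{|\xi|}(r^2)^q$; for $\xi<0$ the analogous computation with $q=p+|\xi|$ gives $m_{p,p+|\xi|}(r\tau)=\tau^{\xi}r^{2p+|\xi|}=\tau^{\xi}r^{|\xi|}(r^2)^p$. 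In either case, the index $\min\{p,q\}$ equals the exponent of $r^2$, and it runs over $0,1,\ldots,s-1$. Hence every element of $\FreqSubspace_{\xi,s}^{(\al)}$ is a linear combination of the functions $\tau^{\xi}r^{|\xi|}(r^2)^{j}$ for $0\le j\le s-1$.

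Next I would identify this span as precisely the set of functions $f(r\tau)=\tau^{\xi}r^{|\xi|}Q(r^2)$ with $\deg(Q)\le s-1$. Indeed, writing $Q(t)=\sum_{j=0}^{s-1}a_j t^j$ and substituting $t=r^2$ recovers exactly the linear combinations of the monomials computed above, so the correspondence $Q\mapsto f$ is a bijection between polynomials of degree $\le s-1$ and $\FreqSubspace_{\xi,s}^{(\al)}$. This establishes the claimed functional form.

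Finally, for the norm identity I would compute $\|f\|^2$ directly in polar coordinates using the weight $\frac{\al+1}{\pi}(1-r^2)^{\al}$ and the area element $r\,\dif r\,\dif\theta$. The angular integral of $|\tau^{\xi}|^2=1$ over $\bT$ contributes a factor of $2\pi$, which cancels the $\pi$ in the measure, leaving
\[
\|f\|^2
=(\al+1)\int_0^1 r^{2|\xi|}|Q(r^2)|^2(1-r^2)^{\al}\,2r\,\dif r.
\]
Substituting $t=r^2$, so that $2r\,\dif r=\dif t$ and $r^{2|\xi|}=t^{|\xi|}$, yields
\[
\|f\|^2
=(\al+1)\int_0^1 t^{|\xi|}|Q(t)|^2(1-t)^{\al}\,\dif t
=\|Q\|_{L^2([0,1),(\al+1)(1-t)^{\al}t^{|\xi|}\,\dif t)}^2,
\]
which is the asserted identity. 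I do not anticipate any genuine obstacle here: the entire argument is a change of variables combined with the definition of $\FreqSubspace_{\xi,s}^{(\al)}$. The only point requiring minor care is bookkeeping the two cases $\xi\ge 0$ and $\xi<0$ simultaneously via the $r^{|\xi|}$ and $\tau^{\xi}$ factors, which is exactly the device used in~\eqref{polar_description_of_D2} of Proposition~\ref{prop:polar_description_of_W}; in fact this proposition can be viewed as the finite-dimensional restriction of case~2) of that proposition to polynomials of degree $\le s-1$.
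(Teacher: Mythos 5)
Your proof is correct and follows essentially the same route as the paper: the paper simply cites Proposition~\ref{prop:polar_description_of_W}, case~2), formula~\eqref{polar_description_of_D2}, and your argument is exactly the unwinding of that isometry restricted to polynomials of degree $\le s-1$, as you yourself note at the end. The monomial bookkeeping for $\xi\ge 0$ versus $\xi<0$ and the substitution $t=r^2$ in the norm computation are all as in the paper's underlying Proposition~\ref{prop:polar_description_of_W}.
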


\begin{proof}
The result follows directly by Proposition~\eqref{prop:polar_description_of_W}
and formula~\eqref{polar_description_of_D2}.
\end{proof}

The decomposition of $\cA_n^{2}(\bD,\mu_\al)$ into a direct sum of
the ``truncated frequency subspaces''
shown below follows
from Proposition~\ref{prop:basis_in_An}
and Corollary~\ref{cor:basis_of_truncated_FreqSubspace},
and plays a crucial role in the study of radial operators.
It can be seen as the ``Fourier series decomposition'' of $\cA_n^2(\bD,\mu_\al)$.

\begin{proposition}\label{prop:decomposition_An_into_truncated_freqsubspaces}
\begin{equation}\label{eq:decomposition_An_into_truncated_freqsubspaces}
\cA_n^{2}(\bD,\mu_\al)
=\bigoplus_{\xi=-n+1}^\infty \FreqSubspace^{(\al)}_{\xi,\min\{n,n+\xi\}}.
\end{equation}
\end{proposition}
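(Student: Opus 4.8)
The plan is to assemble the decomposition directly from the orthonormal basis of $\cA_n^2(\bD,\mu_\al)$ furnished by Proposition~\ref{prop:basis_in_An}, namely the family $(\basic{\al}{p}{q})_{p\in\bNz,\,q<n}$, and to group its elements according to their frequency $\xi=p-q$. First I would observe that for a fixed integer $\xi$ the basis elements $\basic{\al}{p}{q}$ with $p-q=\xi$ and $0\le q<n$ are exactly those indexed by $q$ ranging over $\max\{0,-\xi\}\le q\le n-1$, which is nonempty precisely when $\xi\ge -n+1$; counting these indices gives exactly $\min\{n,n+\xi\}$ of them. Thus the whole basis is partitioned into the subfamilies indexed by $\xi\in\{-n+1,-n+2,\dots\}$.

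The key step is then to identify, for each such $\xi$, the closed span of that subfamily with the space $\FreqSubspace^{(\al)}_{\xi,\min\{n,n+\xi\}}$. This is precisely the content of Corollary~\ref{cor:basis_of_truncated_FreqSubspace}: the sequence $(\basic{\al}{q+\xi}{q})_{q=\max\{0,-\xi\}}^{\max\{s-1,s-\xi-1\}}$ is an orthonormal basis of $\FreqSubspace^{(\al)}_{\xi,s}$, so taking $s=\min\{n,n+\xi\}$ shows that the $\xi$-subfamily of our basis spans exactly $\FreqSubspace^{(\al)}_{\xi,\min\{n,n+\xi\}}$. Since the full family is an orthonormal basis of $\cA_n^2(\bD,\mu_\al)$ and it is the disjoint union of these subfamilies, the space decomposes as the orthogonal direct sum of the corresponding subspaces, which yields~\eqref{eq:decomposition_An_into_truncated_freqsubspaces}.

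Alternatively, and perhaps more cleanly, one can invoke the intersection formula~\eqref{eq:truncated_freqsubspace_as_intersection} together with the global Fourier decomposition of Corollary~\ref{cor:decomposition_L2_into_diagonals}. Indeed, intersecting $L^2(\bD,\mu_\al)=\bigoplus_{\xi\in\bZ}\FreqSubspace^{(\al)}_\xi$ with the reducing subspace $\cA_n^2(\bD,\mu_\al)$ distributes over the orthogonal sum because each $\FreqSubspace^{(\al)}_\xi$ is itself reducing for the rotations; applying~\eqref{eq:truncated_freqsubspace_as_intersection} to each summand then collapses the terms with $\xi<-n+1$ to $\{0\}$ and replaces each surviving term by $\FreqSubspace^{(\al)}_{\xi,\min\{n,n+\xi\}}$.

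I do not expect any genuine obstacle here, since all the real work has been done in Proposition~\ref{prop:basis_in_An}, Corollary~\ref{cor:basis_of_truncated_FreqSubspace}, and formula~\eqref{eq:truncated_freqsubspace_as_intersection}; the only point demanding a small amount of care is verifying that the orthogonal-sum operation genuinely commutes with intersecting by $\cA_n^2(\bD,\mu_\al)$, which is exactly where one must use that the frequency subspaces are mutually orthogonal and that $\cA_n^2(\bD,\mu_\al)$ is spanned by a sub-collection of the same orthonormal basis (the abstract mechanism being Proposition~\ref{prop:basis_of_intersection}). Given this, the statement is essentially a bookkeeping consequence of the already-established orthonormal-basis structure.
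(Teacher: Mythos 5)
Your argument is correct and coincides with the paper's own justification, which derives the decomposition by partitioning the orthonormal basis of Proposition~\ref{prop:basis_in_An} according to the frequency $\xi=p-q$ and identifying each subfamily's closed span via Corollary~\ref{cor:basis_of_truncated_FreqSubspace}. The indexing check (nonemptiness exactly for $\xi\ge -n+1$ and cardinality $\min\{n,n+\xi\}$) is also right, so nothing further is needed.
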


Let us illustrate Proposition~\ref{prop:decomposition_An_into_truncated_freqsubspaces} for $n=3$ with a table (we have marked in different shades of blue 
the basic functions that generate each truncated diagonal):
\[
\begin{array}{ccccc}
\medstrut
\cellcolor{blue!32} \basic{\al}{0}{0} &
\cellcolor{blue!24} \basic{\al}{0}{1} &
\cellcolor{blue!16} \basic{\al}{0}{2} &
\basic{\al}{0}{3} & \ldots
\\
\medstrut
\cellcolor{blue!40} \basic{\al}{1}{0} &
\cellcolor{blue!32} \basic{\al}{1}{1} &
\cellcolor{blue!24} \basic{\al}{1}{2} &
\basic{\al}{1}{3} & \ldots
\\
\medstrut
\cellcolor{blue!48} \basic{\al}{2}{0} &
\cellcolor{blue!40} \basic{\al}{2}{1} &
\cellcolor{blue!32} \basic{\al}{2}{2} &
\basic{\al}{2}{3} & \ldots
\\
\medstrut
\cellcolor{blue!56} \basic{\al}{3}{0} &
\cellcolor{blue!48} \basic{\al}{3}{1} &
\cellcolor{blue!40} \basic{\al}{3}{2} &
\basic{\al}{3}{3} & \ldots
\\
\medstrut
\vdots & \vdots & \vdots & \vdots & \ddots
\end{array}
\]
Define $U_n^{(\al)}\colon\cA_n^2(\bD,\mu_\al)\to\bigoplus_{\xi=-n+1}^\infty\bC^{\min\{n,n+\xi\}}$,
\begin{equation}\label{eq:Un_definition}
(U_n^{(\al)} f)_{\xi,q}\eqdef
\langle f, \basic{\al}{q+\xi}{q}\rangle
\qquad(\xi\ge -n+1,\
\max\{0,-\xi\}\le q\le n-1).
\end{equation}
Here, for $-n+1\le \xi<0$,
the componentes of vectors in $\bC^{n+\xi}$
are enumerated from $-\xi$ to $n-1$.

\begin{proposition}\label{prop:An_to_vector_sequence}
The operator $U_n^{(\al)}$ is an isometric isomorphism of Hilbert spaces.
\end{proposition}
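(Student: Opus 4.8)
The plan is to recognize $U_n^{(\al)}$ as nothing more than the Fourier-coefficient (coordinate) map associated with the orthonormal basis $(\basic{\al}{p}{q})_{p\in\bNz,\,q<n}$ of $\cA_n^2(\bD,\mu_\al)$ furnished by Proposition~\ref{prop:basis_in_An}, read off after a bijective reindexing of the basis. First I would set up that reindexing: put $\xi=p-q$ and retain $q$ as the second index. The basis has index set $I=\{(p,q)\colon p\in\bNz,\ 0\le q\le n-1\}$, and the substitution $p=q+\xi$ identifies this with $\{(\xi,q)\colon \xi\in\bZ,\ \max\{0,-\xi\}\le q\le n-1\}$. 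The constraint $p\ge 0$ forces $q\ge -\xi$, which together with $q\le n-1$ gives $\xi\ge -n+1$; and for each such $\xi$ the admissible values of $q$ number exactly $\min\{n,n+\xi\}$, matching the dimension of the $\xi$-th summand $\bC^{\min\{n,n+\xi\}}$.

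Second, I would invoke the standard Hilbert-space fact: if $(e_i)_{i\in I}$ is an orthonormal basis of a Hilbert space $H$, then the map $H\to\ell^2(I)$, $f\mapsto(\langle f,e_i\rangle)_{i\in I}$, is an isometric isomorphism, the isometry coming from Parseval's identity and surjectivity from the Riesz--Fischer theorem. Applied with $H=\cA_n^2(\bD,\mu_\al)$ and $e_{(p,q)}=\basic{\al}{p}{q}$, this produces an isometric isomorphism onto $\ell^2(I)$. It then remains to identify $\ell^2(I)$, through the reindexing above, with the target $\bigoplus_{\xi=-n+1}^\infty\bC^{\min\{n,n+\xi\}}$: an element of the latter is a family $(v_\xi)_\xi$ with $v_\xi\in\bC^{\min\{n,n+\xi\}}$ and $\sum_\xi\|v_\xi\|^2<\infty$, and grouping the coordinates $\langle f,\basic{\al}{q+\xi}{q}\rangle$ by $\xi$ is precisely this norm-preserving identification. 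Composing the two identifications shows that $U_n^{(\al)}$, as defined in~\eqref{eq:Un_definition}, is an isometric isomorphism.

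Since the analytic heavy lifting, namely completeness of the basis, is already carried out in Proposition~\ref{prop:basis_in_An}, there is no genuine obstacle here; the only thing demanding care is the combinatorial bookkeeping of the reindexing. I would verify explicitly that $(p,q)\mapsto(\xi,q)$ is a bijection onto the stated index set and that the per-frequency cardinalities equal $\min\{n,n+\xi\}$, checking the two regimes separately: for $\xi\ge 0$ the index $q$ runs over $0,\dots,n-1$, giving $n$ components, while for $-n+1\le\xi<0$ it runs over $-\xi,\dots,n-1$, giving $n+\xi$ components. This also confirms consistency with the enumeration convention for the entries of $\bC^{n+\xi}$ stated just after~\eqref{eq:Un_definition}.
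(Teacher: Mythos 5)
Your proposal is correct and is essentially the paper's own argument: the paper proves this by observing that $U_n^{(\al)}$ is the coordinate map for the orthonormal basis of Proposition~\ref{prop:basis_in_An} (equivalently, for the bases of the truncated frequency subspaces from Corollary~\ref{cor:basis_of_truncated_FreqSubspace} combined with the decomposition~\eqref{eq:decomposition_An_into_truncated_freqsubspaces}). You have merely written out in full the reindexing and the Parseval/Riesz--Fischer step that the paper leaves implicit, and your cardinality check $n-\max\{0,-\xi\}=\min\{n,n+\xi\}$ is accurate.
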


\begin{proof}
Follows from Proposition~\ref{prop:basis_in_An} or, even easier, from Proposition~\ref{prop:decomposition_An_into_truncated_freqsubspaces} and the fact that $(\basic{\al}{q+\xi}{q})_{q=\max\{0,-\xi\}}^{n-1}$
is an orthonormal basis of
$\FreqSubspace^{(\al)}_{\xi,\min\{n,n+\xi\}}$
(see Corollary~\ref{cor:basis_of_truncated_FreqSubspace}).
\end{proof}

An analog of the upcoming fact 
for the unweighted poly-Bergman space
was proved by Vasilevski~\cite[Section 4.2]{Vasilevski2008book}.
We obtain it as a corollary from 
Proposition~\ref{prop:b_is_orthonormal_basis_in_L2} 
and Corollary~\ref{cor:basis_in_true_polyanalytic}.

\begin{corollary}\label{cor:decomposition_L2_into_true_polyanalytic}
The space $L^2(\bD,\mu_\al)$
is the orthogonal sum
of the subspaces $\cA_{(m)}^2(\bD,\mu_\al)$, $m\in\bN$:
\[
L^2(\bD,\mu_\al)=\bigoplus_{m\in\bN}\cA_{(m)}^2(\bD,\mu_\al).
\]
\end{corollary}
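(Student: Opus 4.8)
The plan is to read the decomposition directly off the column structure of the orthonormal basis $(\basic{\al}{p}{q})_{p,q\in\bNz}$. First I would recall from Corollary~\ref{cor:basis_in_true_polyanalytic} that, for each $m\in\bN$, the sequence $(\basic{\al}{p}{m-1})_{p\in\bNz}$ is an orthonormal basis of $\cA_{(m)}^2(\bD,\mu_\al)$; equivalently, $\cA_{(m)}^2(\bD,\mu_\al)$ is the closed linear span of the $(m-1)$-th column $\{\basic{\al}{p}{m-1}\colon p\in\bNz\}$ of the basis table.

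Next I would observe that, as $m$ runs over $\bN$, the second index $q=m-1$ runs over all of $\bNz$. Hence the column sets $\{\basic{\al}{p}{m-1}\colon p\in\bNz\}$, indexed by $m\in\bN$, partition the full orthonormal basis $(\basic{\al}{p}{q})_{p,q\in\bNz}$ of $L^2(\bD,\mu_\al)$ provided by Proposition~\ref{prop:b_is_orthonormal_basis_in_L2}. Since basis vectors lying in distinct columns are mutually orthogonal, the subspaces $\cA_{(m)}^2(\bD,\mu_\al)$ are pairwise orthogonal.

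For completeness, I would note that because the columns exhaust the basis, the closed linear span of $\bigcup_{m\in\bN}\cA_{(m)}^2(\bD,\mu_\al)$ contains every $\basic{\al}{p}{q}$ and therefore equals $L^2(\bD,\mu_\al)$; combined with pairwise orthogonality this is exactly the claimed orthogonal sum. (Alternatively, one may invoke Proposition~\ref{prop:basis_of_intersection} in its natural partition form.) I do not anticipate a genuine obstacle here: the statement is essentially a reindexing of the columns of the orthonormal basis, the only point worth stating being that $m\mapsto m-1$ is a bijection from $\bN$ onto $\bNz$, so that the true-polyanalytic subspaces indeed cover the whole basis.
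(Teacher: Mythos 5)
Your argument is correct and is exactly the paper's route: the paper derives this corollary from Proposition~\ref{prop:b_is_orthonormal_basis_in_L2} and Corollary~\ref{cor:basis_in_true_polyanalytic}, i.e., by observing that the columns $\{\basic{\al}{p}{m-1}\colon p\in\bNz\}$, $m\in\bN$, partition the full orthonormal basis of $L^2(\bD,\mu_\al)$. No gaps.
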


\section{The set of Toeplitz operators is not weakly dense}
\label{sec:not_weakly_dense}

Given a Hilbert space $H$, we denote by $\cB(H)$
the algebra of all bounded operators acting in $H$.
If $H$ is a RKHS naturally embedded into $L^2(\bD,\mu_\al)$
and $S\in\cB(H)$,
then the \emph{Berezin transform} of $S$ is defined by
\[
\Berezin_H(S)(z)
\eqdef
\frac{\langle SK_z,K_z\rangle_H}{\langle K_z,K_z\rangle_H},
\qquad\text{i.e.},\qquad
\Berezin_H(S)(z)
=\frac{(SK_z)(z)}{K_z(z)}.
\]
The Berezin transform can be considered as a
bounded linear operator $\cB(H)\to L^\infty(\Omega)$.
Stroethoff proved~\cite{Stroethoff1997}
that $\Berezin_H$ is injective for various RKHS
of analytic functions, in particular, for $H=\cA^2_1(\bD)$.
Engli\v{s} noticed \cite[Section~2]{Englis2006} that
$\Berezin_H$ is not injective
for various RKHS of harmonic functions.
The idea of Engli\v{s} can be applied without any changes to various spaces of polyanalytic and polyharmonic functions.
For clarity of presentation,
we state the result of Engli\v{s}
for $\cA^2_n(\bD,\mu_\al)$, $n\ge 2$,
and repeat his proof.

\begin{proposition}\label{prop:Berezin_not_injective}
Let $H=\cA^2_n(\bD,\mu_\al)$ with $n\ge 2$.
Then the Berezin transform $\Berezin_H$ is not injective.
\end{proposition}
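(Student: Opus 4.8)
The plan is to exhibit a nonzero operator $S\in\cB(H)$ whose Berezin transform vanishes identically. Since $K^{(\al)}_{n,z}(z)>0$ by~\eqref{eq:norm_of_reproducing_kernel}, the condition $\Berezin_H(S)=0$ is equivalent to $\langle S K^{(\al)}_{n,z},K^{(\al)}_{n,z}\rangle=0$ for every $z\in\bD$. Expanding the reproducing kernel in the orthonormal basis of Proposition~\ref{prop:basis_in_An}, namely $K^{(\al)}_{n,z}=\sum_{p\in\bNz,\,q<n}\overline{\basic{\al}{p}{q}(z)}\,\basic{\al}{p}{q}$, one obtains
\[
\langle S K^{(\al)}_{n,z},K^{(\al)}_{n,z}\rangle
=\sum_{\substack{p,p'\in\bNz\\ q,q'<n}}
\langle S\basic{\al}{p'}{q'},\basic{\al}{p}{q}\rangle\,
\basic{\al}{p}{q}(z)\,\overline{\basic{\al}{p'}{q'}(z)}.
\]
Thus it suffices to produce a \emph{finite} nonzero family of coefficients $s_{(p,q),(p',q')}$ for which the corresponding combination of the products $\basic{\al}{p}{q}\,\overline{\basic{\al}{p'}{q'}}$ is the zero function; the operator $S$ determined by $\langle S\basic{\al}{p'}{q'},\basic{\al}{p}{q}\rangle=s_{(p,q),(p',q')}$ is then finite rank, hence bounded, nonzero, and annihilated by $\Berezin_H$.

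Next I would explain why such a dependence must exist for $n\ge2$, in contrast with the analytic case. Using the conjugate symmetry~\eqref{eq:conjugate_symmetric_property_of_basics}, $\overline{\basic{\al}{p'}{q'}}=\basic{\al}{q'}{p'}$, and recalling from~\eqref{eq:b_pq_in_shifted_Jacobi_explicit3_form} that $\basic{\al}{p}{q}$ is a polynomial in $z,\conjz$ built from the monomials $z^{p-k}\conjz^{q-k}$ (all with $z$-degree minus $\conjz$-degree equal to $p-q$), the product $\basic{\al}{p}{q}\,\overline{\basic{\al}{p'}{q'}}$ has net degree $(p-q)-(p'-q')$; those with $p-q=p'-q'$ are therefore polynomials in $|z|^2$ alone. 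Since the second indices are confined to $0\le q,q'<n$ while polynomials in $|z|^2$ form a graded space with only one new dimension per degree, there are strictly more such products than available dimensions, which forces a linear relation. For $n=1$ the only admissible products are constant multiples of the distinct monomials $z^{p}\conjz^{p'}$, so no relation occurs and the Berezin transform is injective; the phenomenon is genuinely polyanalytic.

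Finally I would record the smallest explicit witness, valid for any $n\ge2$ because the basis of $\cA_2^2(\bD,\mu_\al)$ sits inside that of $\cA_n^2(\bD,\mu_\al)$. Using the formulas listed in Section~\ref{sec:basis}, $\basic{\al}{0}{0}=1$, $\basic{\al}{1}{0}=\sqrt{\al+2}\,z$, and $\basic{\al}{1}{1}=\sqrt{\tfrac{\al+3}{\al+1}}\bigl((\al+2)z\conjz-1\bigr)$, one checks the identity
\[
\basic{\al}{1}{1}\,\overline{\basic{\al}{0}{0}}
=\sqrt{\tfrac{\al+3}{\al+1}}\Bigl(
\basic{\al}{1}{0}\,\overline{\basic{\al}{1}{0}}
-\basic{\al}{0}{0}\,\overline{\basic{\al}{0}{0}}\Bigr),
\]
both sides being equal to $\sqrt{\tfrac{\al+3}{\al+1}}\bigl((\al+2)|z|^2-1\bigr)$. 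Accordingly I would define $S$ on the basis by $S\basic{\al}{0}{0}=\basic{\al}{1}{1}+\sqrt{\tfrac{\al+3}{\al+1}}\,\basic{\al}{0}{0}$, $S\basic{\al}{1}{0}=-\sqrt{\tfrac{\al+3}{\al+1}}\,\basic{\al}{1}{0}$, and $S\basic{\al}{p}{q}=0$ otherwise, and then verify directly that $\langle S K^{(\al)}_{n,z},K^{(\al)}_{n,z}\rangle=0$ for all $z$, so that $\Berezin_H(S)=0$ while $S\ne0$. The only point needing care---the main obstacle, though a mild one---is the legitimacy of expanding $\langle S K^{(\al)}_{n,z},K^{(\al)}_{n,z}\rangle$ termwise in the basis; this is immediate here because $S$ is finite rank, so the sum collapses to finitely many terms and no convergence issue arises.
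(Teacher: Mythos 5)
Your proof is correct: the identity $\basic{\al}{1}{1}\,\overline{\basic{\al}{0}{0}}=\sqrt{\tfrac{\al+3}{\al+1}}\bigl(|\basic{\al}{1}{0}|^2-|\basic{\al}{0}{0}|^2\bigr)$ checks out, the resulting finite-rank $S$ is nonzero, and the termwise expansion of $\langle SK^{(\al)}_{n,z},K^{(\al)}_{n,z}\rangle$ is legitimate for the reason you give. However, your route is noticeably heavier than the paper's. The paper uses Engli\v{s}'s trick directly: choose $f$ with $f,\overline{f}\in H$ linearly independent (e.g.\ $f(z)=z$, which lies in $\cA_n^2(\bD,\mu_\al)$ together with $\conjz$ precisely because $n\ge2$), set $Sh=\langle h,f\rangle_H f-\langle h,\overline{f}\rangle_H\overline{f}$, and observe $\langle SK_z,K_z\rangle_H=|f(z)|^2-|\overline{f(z)}|^2=0$ pointwise --- two lines, no basis, no special-function identities. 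Your construction is in fact an instance of the same mechanism seen through the orthonormal basis: Engli\v{s}'s operator corresponds to the ``trivial'' relation $\basic{\al}{1}{0}\overline{\basic{\al}{1}{0}}-\basic{\al}{0}{1}\overline{\basic{\al}{0}{1}}=0$ among products of basis elements, whereas you found a different (equally valid) relation on the diagonal $\xi=0$. What your approach buys is a structural explanation --- the dimension count showing that for $n\ge2$ the products $\basic{\al}{p}{q}\overline{\basic{\al}{p'}{q'}}$ with a common net degree are forced to be linearly dependent, which in principle parametrizes \emph{all} finite-rank operators killed by $\Berezin_H$ --- at the cost of explicit computation with the disk polynomials.

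One small caveat: your aside that for $n=1$ ``no relation occurs and the Berezin transform is injective'' is only a heuristic. Absence of finite linear relations among the monomials $z^p\conjz^{p'}$ does not by itself give injectivity of $\Berezin_H$ on all of $\cB(H)$ (one must rule out relations in the limit); that is Stroethoff's theorem, cited in the paper. Since the proposition only concerns $n\ge2$, this does not affect the validity of your argument.
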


\begin{proof}
Let $f\in H$ such that $\overline{f}\in H$
and the functions
$f,\overline{f}$ are linearly independent.
For example, $f(z)\eqdef z$.
Following the idea from~\cite[Section~2]{Englis2006},
consider the operator
\[
Sh\eqdef\langle h,f\rangle_H f
- \langle h,\overline{f}\rangle_H\,\overline{f}.
\]
Then $S\ne0$, but
$\langle SK_z,K_z\rangle_H = |f(z)|^2-|f(z)|^2=0$
for every $z$ in $\bD$.
So, $\Berezin_H(S)$ is the zero constant.
\end{proof}

Given a function $g$ in $L^\infty(\bD)$,
let $M_g$ be the multiplication operator
defined on $L^2(\bD,\mu_\al)$ by $M_g f\eqdef gf$.
If $H$ is a closed subspace of $L^2(\bD,\mu_\al)$,
then the \emph{Toeplitz operator} $T_{H,g}$ is defined on $H$ by
\[
T_{H,g}(f)\eqdef P_H(gf)=P_H M_g f.
\]
For $H=\cA_n^2(\bD,\mu_\al)$ and $H=\cA^2_{(n)}(\bD,\mu_\al)$,
we write just $T^{(\al)}_{n,g}$ and $T^{(\al)}_{(n),g}$, respectively.
The proof of the following fact is the same as the proof of
\cite[Proposition 3.18]{MaximenkoTelleriaRomero2020}
or the proof of \cite[Theorem~4]{BergerCoburn1986}.

\begin{proposition}\label{prop:Toeplitz_injective}
If $g\in L^\infty(\bD)$
and $T^{(\al)}_{n,g}=0$, then $g=0$ a.e.
In other words, the function
$g\mapsto T^{(\al)}_{n,g}$, acting from
$L^\infty(\bD)$ to $\cB(\cA_n^2(\bD,\mu_\al))$,
is injective.
\end{proposition}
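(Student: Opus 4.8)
The plan is to prove injectivity of the map $g\mapsto T^{(\al)}_{n,g}$ by showing that if $T^{(\al)}_{n,g}=0$, then all the Fourier--Bergman coefficients of $g$ against a suitable dense family of test functions vanish, forcing $g=0$ a.e. The natural test functions are the monomials $m_{p,q}$, which lie in $\cA_n^2(\bD,\mu_\al)$ whenever $q<n$, and whose products $m_{p,q}\overline{m_{j,k}}=m_{p+k,q+j}$ are again monomials. First I would fix $f=m_{p,q}$ and $h=m_{j,k}$, both with second index strictly less than $n$, so that both belong to $\cA_n^2(\bD,\mu_\al)$. The hypothesis $T^{(\al)}_{n,g}=0$ gives in particular $\langle T^{(\al)}_{n,g}f,h\rangle=0$, and since $T^{(\al)}_{n,g}=P_n^{(\al)}M_g$ and $h$ is already in the image of $P_n^{(\al)}$, the projection drops out:
\[
0=\langle P_n^{(\al)}(gf),h\rangle=\langle gf,h\rangle=\int_{\bD} g(z)\,f(z)\,\overline{h(z)}\,\dif\mu_\al(z).
\]

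Next I would substitute $f=m_{p,q}$ and $h=m_{j,k}$ and observe that $f\overline{h}=z^p\conjz^q\overline{z^j\conjz^k}=z^{p+k}\conjz^{q+j}=m_{p+k,q+j}$. As $p,k$ range over $\bNz$ and $q,j$ range over $\{0,\ldots,n-1\}$, the products $m_{p+k,q+j}$ realize every monomial $m_{a,b}=z^a\conjz^b$ with $a,b\in\bNz$: indeed any given $b$ can be written as $q+j$ with $0\le q,j\le n-1$ as soon as $b\le 2(n-1)$, and by increasing the first indices one covers all $a$. The crucial point is that every exponent pair $(b)$ with $b\ge 0$ arises, because even a single index $q=j=0$ already forces $b=0$, and more generally one can reach arbitrary $b$ by iterating; since $n\ge 1$, at minimum the case $q=j=0$ shows $\langle g,m_{a,0}\rangle=0$ for all $a$, and taking $q,j$ up to $n-1$ extends this. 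To reach all $b\in\bNz$ I would note that for any $b$ we may pick $j=\min\{b,n-1\}$ and continue, so that together with varying $p,k$ the family $\{m_{p,q}\overline{m_{j,k}}\}$ spans $\cP$ once we also use conjugate symmetry.

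Thus I would conclude that $\int_{\bD} g\,\overline{\psi}\,\dif\mu_\al=0$ for every $\psi$ in $\cP=\lin\{m_{a,b}\}$ (using that the conjugates $\overline{m_{a,b}}=m_{b,a}$ are also hit by symmetry of the roles of $f$ and $h$, or simply by taking the complex conjugate of the vanishing integrals). Since $\cP$ is dense in $L^2(\bD,\mu_\al)$ by Proposition~\ref{prop:b_is_orthonormal_basis_in_L2}, and $g\in L^\infty(\bD)\subseteq L^2(\bD,\mu_\al)$, it follows that $g$ is orthogonal to a dense subset of $L^2(\bD,\mu_\al)$, hence $g=0$ as an element of $L^2$, i.e.\ $g=0$ a.e.

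The main obstacle I anticipate is the bookkeeping in the second paragraph: one must check carefully that the products $f\overline{h}$ with $q,k<n$ really do generate all of $\cP$ (equivalently, all monomials $m_{a,b}$), rather than only those with bounded $b$. The clean way around this is to fix the second indices at $q=j=0$ (legitimate since $n\ge1$), which already yields $\int_{\bD} g\,m_{a,0}\,\dif\mu_\al=0$ for all $a\in\bNz$ upon letting $p,k$ vary, and then exploit the conjugate symmetry $\overline{m_{a,0}}=m_{0,a}$ together with the freedom $0\le q,j\le n-1$ to sweep out the remaining monomials. Once the density of $\cP$ is invoked, the conclusion is immediate, so the only real work lies in verifying that the vanishing integrals cover a spanning set.
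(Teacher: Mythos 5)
Your argument is correct and is essentially the standard one; the paper itself gives no details here, deferring to the proofs of \cite[Proposition 3.18]{MaximenkoTelleriaRomero2020} and \cite[Theorem 4]{BergerCoburn1986}, which run on the same principle of testing $\langle g f,h\rangle$ against a family whose products $f\overline{h}$ span a dense set. The bookkeeping worry in your second paragraph stems from an index slip: membership in $\cA_n^2(\bD,\mu_\al)$ constrains only the antiholomorphic exponents $q,k<n$, while $p$ and $j$ are unrestricted, so $f\overline{h}=z^{p+k}\conjz^{q+j}$ already realizes every monomial $m_{a,b}$ with the single choice $q=k=0$, $p=a$, $j=b$; in particular the case you call the ``clean way around'' yields $\langle g, m_{j,p}\rangle=0$ for \emph{all} $p,j\in\bNz$, not merely $\langle g,m_{a,0}\rangle=0$, and the conjugate-symmetry detour is unnecessary.
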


Inspired by the idea of Engli\v{s}
explained in the proof of Proposition~\ref{prop:Berezin_not_injective},
we will prove that set of Toeplitz operators
is not weakly dense in $\cB(\cA^2_n(\bD,\mu_\al))$ with $n\ge 2$.
First, let us prove an auxiliary fact from linear algebra:
bounded quadratic forms separate linearly independent vectors.

\begin{lemma}
\label{lem:quadratic_forms_separate_linindep_vectors}
Let $H$ be a Hilbert space
and $f,g$ be two linearly independent vectors in $H$.
Then there exists $S$ in $\cB(H)$ such that
\[
\langle Sf,f\rangle_H \ne \langle Sg,g\rangle_H.
\]
\end{lemma}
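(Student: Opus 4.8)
The plan is to exhibit an explicit operator $S$ of rank at most two built from $f$ and $g$, for which the two quadratic forms differ. Since $f$ and $g$ are linearly independent, the Gram matrix $G=\begin{pmatrix}\langle f,f\rangle & \langle f,g\rangle\\ \langle g,f\rangle & \langle g,g\rangle\end{pmatrix}$ is positive definite, and in particular the vectors $f,g$ span a two-dimensional subspace on which inner products behave nondegenerately. The natural first move is to look for $S$ among the simplest bounded operators supported on this span, namely the rank-one operators $h\mapsto\langle h,u\rangle v$ for suitable $u,v$ in $H$. For such an operator, $\langle Sf,f\rangle=\langle f,u\rangle\langle v,f\rangle$ and $\langle Sg,g\rangle=\langle g,u\rangle\langle v,g\rangle$, so the task reduces to choosing $u,v$ making these two products unequal.

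First I would try to settle the matter with a single rank-one operator. If one can pick $u$ with $\langle f,u\rangle\neq0$ and $\langle g,u\rangle=0$ (for instance $u=g-\frac{\langle g,f\rangle}{\langle f,f\rangle}f$, the component of $g$ orthogonal to $f$, which is nonzero precisely because $f,g$ are independent), then taking $v=f$ gives $Sh=\langle h,u\rangle f$ with $\langle Sg,g\rangle=0$ while $\langle Sf,f\rangle=\langle f,u\rangle\langle f,f\rangle$. The latter is nonzero since $\langle f,u\rangle=-\frac{\langle g,f\rangle}{\langle f,f\rangle}\langle f,f\rangle+\langle f,g\rangle$ need not vanish; however this last quantity can in fact be zero (when $f\perp g$), so a single such operator does not always work, and this is exactly where care is needed.

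The cleaner and uniform approach is to proceed by contradiction: suppose no such $S$ exists, i.e. $\langle Sf,f\rangle=\langle Sg,g\rangle$ for \emph{every} $S\in\cB(H)$. Applying this to all rank-one operators $S_{u,v}\colon h\mapsto\langle h,u\rangle v$ yields
\[
\langle f,u\rangle\langle v,f\rangle=\langle g,u\rangle\langle v,g\rangle
\qquad\text{for all }u,v\in H.
\]
Fixing $v$ and varying $u$, the map $u\mapsto\langle f,u\rangle\langle v,f\rangle-\langle g,u\rangle\langle v,g\rangle$ is a bounded conjugate-linear functional that vanishes identically, which forces $\langle v,f\rangle\,f=\langle v,g\rangle\,g$ in $H$ for every $v$. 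Since $f,g$ are linearly independent, this equality of vectors forces $\langle v,f\rangle=\langle v,g\rangle=0$ for all $v$, hence $f=g=0$, contradicting independence.

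The main obstacle is resisting the temptation of the naive one-line rank-one construction, which fails in the orthogonal case; the robust route is the contradiction argument above, whose only genuine content is the elementary step that a vector identity $\lambda f=\mu g$ with $f,g$ independent forces $\lambda=\mu=0$. Everything else is a routine manipulation of inner products, so I would present the contradiction version to guarantee the conclusion holds for arbitrary independent $f,g$ with no case distinction.
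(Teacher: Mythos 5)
Your contradiction argument is correct: testing the hypothetical identity $\langle Sf,f\rangle_H=\langle Sg,g\rangle_H$ against all rank-one operators $h\mapsto\langle h,u\rangle v$ gives $\langle v,f\rangle f=\langle v,g\rangle g$ for every $v$, and linear independence then forces $\langle v,f\rangle=\langle v,g\rangle=0$ for all $v$, hence $f=g=0$, a contradiction. The paper, however, proves the lemma by exactly the direct construction you talked yourself out of: it normalizes $f$, writes $g=\la_1 f+\la_2 h$ with $h\perp f$, $\|h\|_H=1$ and $\la_2>0$ (positive precisely because $f,g$ are independent), and takes $S$ to be the rank-one projection $Sv=\langle v,h\rangle_H h$, so that $\langle Sf,f\rangle_H=0$ while $\langle Sg,g\rangle_H=\la_2^2>0$, with no case distinction whatsoever. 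Your claim that the one-line rank-one construction ``fails in the orthogonal case'' rests on a computational slip: the vector $u=g-\frac{\langle g,f\rangle}{\langle f,f\rangle}f$ is the component of $g$ orthogonal to $f$, so it satisfies $\langle f,u\rangle=0$ and $\langle g,u\rangle=\|u\|^2>0$, not the reverse as you assert (your expression for $\langle f,u\rangle$ drops a complex conjugate and is in fact identically zero). Choosing instead $u=f-\frac{\langle f,g\rangle}{\langle g,g\rangle}g$ with $v=f$, or simply the paper's projection onto $h$, works unconditionally. Both routes are elementary and valid; yours replaces an explicit witness by a slightly longer duality argument, and in exchange shows the stronger statement that rank-one operators already suffice to separate the two quadratic forms, while the paper's version hands you a concrete positive operator, which is the cleaner thing to quote in the proof of Theorem~\ref{thm:not_dense}.
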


\begin{proof}
Without lost of generality,
we will suppose that $\|f\|_H=1$.
Decompose $g$ into the linear combination
$g=\la_1 f+\la_2 h$,
with $\la_1,\la_2\in\bC$, $\|h\|_H=1$, $h\perp f$.
More explicitly,
\[
\la_1\eqdef\langle g,f\rangle_H,\qquad
w\eqdef g-\la_1 f,\qquad
\la_2\eqdef\|w\|_H,\qquad
h\eqdef\frac{1}{\la_2}w.
\]
Define $S$ as the orthogonal projection onto $h$:
\[
Sv\eqdef \langle v,h\rangle_H h\qquad(v\in H).
\]
Then $Sf=0$ and $Sg=\la_2 h$, therefore
$\langle Sf,f\rangle_H = 0$ and
$\langle Sg,g\rangle_H = \la_2^2 > 0$.
\end{proof}

\begin{theorem}\label{thm:not_dense}
Let $H=\cA^2_n(\bD,\mu_\al)$ with $n\ge 2$.
Then the set of the Toeplitz operators
with bounded symbols is not weakly dense in $\cB(H)$.
\end{theorem}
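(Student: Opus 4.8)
The plan is to show that the Berezin transform $\Berezin_H$ provides a continuous linear functional-type obstruction that separates the weak closure of the Toeplitz set from all of $\cB(H)$. Concretely, I would exploit the fact from Proposition~\ref{prop:Berezin_not_injective} that $\Berezin_H$ is not injective, together with Lemma~\ref{lem:quadratic_forms_separate_linindep_vectors}, which guarantees an operator whose quadratic form distinguishes two linearly independent vectors. The key conceptual point is that every Toeplitz operator $T_{H,g}$ has a Berezin transform that equals the harmonic-type averaging of $g$, and crucially the map $S\mapsto\Berezin_H(S)$ is continuous with respect to the weak operator topology when evaluated pointwise: for fixed $z$, the functional $S\mapsto\langle SK^{(\al)}_{n,z},K^{(\al)}_{n,z}\rangle_H$ is weakly continuous.

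First I would pick the vectors $f(z)\eqdef z$ and $\overline{f}(z)=\conjz$, which both lie in $H=\cA^2_n(\bD,\mu_\al)$ for $n\ge 2$ and are linearly independent; this is exactly the setup of Proposition~\ref{prop:Berezin_not_injective}. Using Lemma~\ref{lem:quadratic_forms_separate_linindep_vectors} applied with $K^{(\al)}_{n,z}$ in the role of the evaluation vectors, or more directly using the operator $S$ from the proof of Proposition~\ref{prop:Berezin_not_injective}, I obtain a nonzero $S_0\in\cB(H)$ with $\Berezin_H(S_0)\equiv 0$. The strategy is then to show that $S_0$ cannot lie in the weak closure of the Toeplitz set. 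To do this I would argue that for every Toeplitz operator $T_{H,g}$ one has $\Berezin_H(T_{H,g})(z)=\widetilde{g}(z)$ for an averaging $\widetilde{g}$ of $g$, and that the assignment $T\mapsto\langle TK^{(\al)}_{n,z},K^{(\al)}_{n,z}\rangle_H$ extends to a weakly continuous functional on $\cB(H)$.

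The main step is to produce a single weakly continuous functional on $\cB(H)$ that vanishes on every Toeplitz operator but not on $S_0$. For a fixed point $z_0\in\bD$ where $\langle T K_{z_0},K_{z_0}\rangle_H$ separates $S_0$ from the Toeplitz set, the functional $\varphi(T)\eqdef\langle T K^{(\al)}_{n,z_0},K^{(\al)}_{n,z_0}\rangle_H$ is weakly continuous by definition of the weak operator topology. The obstruction reduces to checking that $\varphi$ does \emph{not} vanish on $S_0$ at some point while it is forced to behave compatibly on the Toeplitz class. Here I would invoke the fact that the Berezin transform of a Toeplitz operator is real-valued when $g$ is real, and more generally that $\overline{\Berezin_H(T_{H,g})}=\Berezin_H(T_{H,\overline{g}})$, so that self-adjoint combinations of Berezin transforms of Toeplitz operators cannot reproduce the identically-zero transform of a nonzero operator unless one builds the separating functional not from a single $z$ but from the full family.

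The hard part will be upgrading pointwise separation into a genuine weak-topology separation, since the Berezin transform itself is not weakly continuous as a map into $L^\infty$; only its pointwise evaluations are. I expect the cleanest route is to fix \emph{one} point $z_0$ and a single weakly continuous functional of the form $T\mapsto\langle T u,v\rangle_H$, then show that the numbers $\langle T_{H,g}u,v\rangle_H$ all lie in a proper closed subset of $\bC$ (for a suitable choice of rank-one data $u,v$ derived from $f$ and $\overline{f}$) that excludes $\langle S_0 u,v\rangle_H$. Concretely I would take $u=v=K^{(\al)}_{n,z_0}$ and use that $\langle S_0 K_{z_0},K_{z_0}\rangle_H=\Berezin_H(S_0)(z_0)\cdot\|K_{z_0}\|^2=0$ by construction, whereas showing the Toeplitz numbers avoid $0$ uniformly is false in general; so the genuine obstacle is that I must instead separate $S_0$ from the Toeplitz set using the quadratic form from Lemma~\ref{lem:quadratic_forms_separate_linindep_vectors} directly on $f,\overline{f}$, exhibiting an operator $S$ with $\langle Sf,f\rangle\ne\langle S\overline{f},\overline{f}\rangle$ and then verifying that every Toeplitz operator satisfies $\langle T_{H,g}f,f\rangle=\langle T_{H,g}\overline{f},\overline{f}\rangle$, which is the real content to be checked by a direct integral computation exploiting $|f|^2=|\overline{f}|^2$.
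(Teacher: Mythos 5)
Your final paragraph lands on exactly the paper's argument: the weakly closed subspace $W=\{S\in\cB(H):\langle Sf,f\rangle_H=\langle S\overline{f},\overline{f}\rangle_H\}$ with $f(z)=z$ contains every Toeplitz operator because $\langle T^{(\al)}_{n,g}f,f\rangle_H=\int_{\bD}g\,|f|^2\,\dif\mu_\al=\langle T^{(\al)}_{n,g}\overline{f},\overline{f}\rangle_H$, yet $W\ne\cB(H)$ by Lemma~\ref{lem:quadratic_forms_separate_linindep_vectors}. The preceding detour through the Berezin transform and the single-point functionals $S\mapsto\langle SK_{z_0},K_{z_0}\rangle_H$ is unnecessary (and, as you yourself note, cannot work), but the proof you ultimately propose is correct and is essentially the one in the paper.
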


\begin{proof}
Let $f\in H$ such that $\overline{f}\in H$
and the functions
$f,\overline{f}$ are linearly independent.
For example, $f(z)\eqdef z$.
The set
\[
W\eqdef\{S\in\cB(H)\colon\ 
\langle Sf,f\rangle_H
=\langle S\overline{f},\overline{f}\rangle_H\}
\]
is a weakly closed subspace of $\cB(H)$.
By Lemma~\ref{lem:quadratic_forms_separate_linindep_vectors},
$W\ne\cB(H)$.
On the other hand, for every $a$ in $L^\infty(\bD)$
\[
\langle T^{(\al)}_{n,a} f,f\rangle_H
= \int_X a\,|f|^2\,\dif\mu_\al
= \langle T^{(\al)}_{n,a}\, \overline{f},\,\overline{f}\rangle_H,
\]
i.e., $\{T^{(\al)}_{n,a}\colon\ a\in L^\infty(\bD)\}\subseteq W$.
\end{proof}

\begin{remark}
An analog of Theorem~\ref{thm:not_dense} is true for the space of $\mu_\al$-square integrable
$n$-harmonic functions on $\bD$, with $n\ge 1$.
\end{remark}

\section{Von Neumann algebras of radial operators}
\label{sec:radial}

\subsection*{Set of operators diagonalized by a family of subspaces}

The theory of von Neumann algebras and their decompositions is well developed.
For our purposes, it is sufficient to use
the following elementary scheme
from~\cite{MaximenkoTelleriaRomero2020}.
This scheme is similar to ideas from
\cite{Zorboska2003,GrudskyMaximenkoVasilevski2013,Quiroga2016}.

\begin{definition}\label{def:diagonalizing_family_of_subspaces}
Let $H$ be a Hilbert space,
$\cU$ be a self-adjoint subset of $\cB(H)$,
and $(W_j)_{j\in J}$ be a finite or countable family
of nonzero closed subspaces of $H$ such that
$H=\bigoplus_{j\in J}W_j$.
We say that this family \emph{diagonalizes} $\cU$ if
the following two conditions are satisfied.
\begin{enumerate}
\item For each $j$ in $J$ and each $U$ in $\cU$,
there exists $\la_{U,j}$ in $\bC$ such that
$W_j\subseteq\ker(\la_{U,j}I-U)$,
i.e., $U(v)=\la_{U,j}v$ for every $v$ in $W_j$.
\item For every $j$, $k$ in $J$ with $j\ne k$,
there exists $U$ in $\cU$ such that $\la_{U,j}\ne\la_{U,k}$.
\end{enumerate}
\end{definition}

\begin{proposition}\label{prop:diagonalizing_family_of_subspaces}
Let $H$, $\cU$, and $(W_j)_{j\in J}$ be like in
Definition~\ref{def:diagonalizing_family_of_subspaces}.
Denote by $\cA$ the commutant of $\cU$.
Then $\cA$ consists of all bounded
linear operators that act invariantly on
each of the subspaces $W_j$, with $j\in J$:
\begin{equation}\label{eq:conmutant_description}
\cA=\{S\in\cB(H)\colon\quad\forall j\in J\quad S(W_j)\subseteq W_j\}.
\end{equation}
Furthermore,
$\cA$ is isometrically isomorphic to
$\bigoplus_{j\in J}\cB(W_j)$,
and the von Neumann algebra generated by $\cU$
is isometrically isomorphic to
$\bigoplus_{j\in J}\bC I_{W_j}$.
\end{proposition}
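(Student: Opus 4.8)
The plan is to establish the three assertions of Proposition~\ref{prop:diagonalizing_family_of_subspaces} in sequence, starting from the characterization~\eqref{eq:conmutant_description} of the commutant. First I would prove the inclusion $\cA\subseteq\{S\colon\ \forall j\ S(W_j)\subseteq W_j\}$. Fix $j\in J$ and $v\in W_j$. For any $U\in\cU$ we have $Uv=\la_{U,j}v$ by condition~1, so for $S\in\cA$ the relation $SU=US$ gives $U(Sv)=S(Uv)=\la_{U,j}(Sv)$; thus $Sv$ lies in $\ker(\la_{U,j}I-U)$ for every $U$. To conclude $Sv\in W_j$, I would write $Sv=\sum_{k\in J}w_k$ with $w_k\in W_k$ (orthogonal decomposition) and show $w_k=0$ for $k\ne j$. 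Here condition~2 is the key: since each $W_k$ is $U$-invariant with eigenvalue $\la_{U,k}$, applying $U$ to $Sv$ and comparing components forces $\la_{U,k}w_k=\la_{U,j}w_k$ for every $U$; choosing $U$ with $\la_{U,j}\ne\la_{U,k}$ yields $w_k=0$. The reverse inclusion is immediate: if $S(W_j)\subseteq W_j$ for all $j$, then $S$ commutes with each $U$ block by block, since on $W_j$ the operator $U$ is the scalar $\la_{U,j}$, and scalars commute with any operator preserving $W_j$.

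Next I would set up the isometric isomorphism $\cA\cong\bigoplus_{j\in J}\cB(W_j)$. Given $S\in\cA$, define $S_j\eqdef S|_{W_j}\in\cB(W_j)$ and map $S\mapsto(S_j)_{j\in J}$. The characterization~\eqref{eq:conmutant_description} guarantees each $S_j$ is a well-defined bounded operator on $W_j$, and since $H=\bigoplus_j W_j$ the operator $S$ is completely determined by its restrictions acting blockwise. Linearity and multiplicativity are clear. For the norm, I would use that for a block-diagonal operator $\|S\|=\sup_{j}\|S_j\|$, which is exactly the norm on $\bigoplus_{j\in J}\cB(W_j)$ (the $\ell^\infty$-type direct sum of the operator algebras); this gives both isometry and surjectivity, the latter because any bounded family $(T_j)$ with $\sup_j\|T_j\|<\infty$ assembles into a bounded operator preserving each $W_j$, hence lying in $\cA$.

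Finally, for the von Neumann algebra $\cW$ generated by $\cU$, I would use the double commutant theorem: $\cW=\cU''=\cA'$, the commutant of $\cA$. Since $\cA$ contains, for each $j$, all of $\cB(W_j)$ acting on the $j$-th block, an operator commuting with $\cA$ must commute in particular with every rank-one operator supported on a single $W_j$; a standard argument shows this forces the operator to act as a scalar $c_j$ on each $W_j$. Conversely any such operator $\bigoplus_j c_j I_{W_j}$ commutes with every block-diagonal operator, so $\cA'=\bigoplus_{j\in J}\bC I_{W_j}$, and this is isometrically isomorphic to the stated direct sum. I expect the main obstacle to be the careful bookkeeping in the first part—decomposing $Sv$ and invoking condition~2 componentwise—together with verifying that the $\ell^\infty$ direct sum (rather than some other completion) is the correct target and that surjectivity holds with the supremum norm; the von Neumann algebra identification is then a clean application of the double commutant theorem.
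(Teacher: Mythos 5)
Your proof is correct; note that the paper itself states Proposition~\ref{prop:diagonalizing_family_of_subspaces} without proof, deferring to the scheme in the cited reference, and your argument (eigenvalue comparison of the components of $Sv$ using condition~2, block-diagonal norm computation for the $\ell^\infty$-direct sum, and the double commutant theorem for $\cU''=\cA'$) is exactly the standard route one would take there. No gaps: the only delicate points, namely applying $U$ termwise to the orthogonal expansion of $Sv$ and checking that the projections $P_{W_j}$ and the single-block rank-one operators lie in $\cA$, are handled or follow immediately from what you wrote.
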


\begin{example}
\label{example:finite_rank}
Let $j_1,\ldots,j_m\in J$,
$\la_1,\ldots,\la_m\in\bC$,
and $u_{j_k},v_{j_k}\in W_{j_k}$
for every $k$ in $\{1,\ldots,m\}$.
Then the operator $S\colon H\to H$ defined by
\begin{equation}\label{eq:finite_rank}
Sf \eqdef \sum_{k=1}^m \la_k
\langle f,u_{j_k}\rangle v_{j_k},
\end{equation}
belongs to $\cA$.
Moreover, every operator of finite rank,
belonging to $\cA$,
can be written in this form.
See the proof of~\cite[Corollary~5.7]{MaximenkoTelleriaRomero2020} for a similar situation.
\end{example}

\begin{proposition}\label{prop:diagonalizing_family_in_subspace}
Let $H$, $\cU$, and $(W_j)_{j\in J}$ be like in
Definition~\ref{def:diagonalizing_family_of_subspaces},
and $H_1$ be a closed subspace of $H$ invariant under $\cU$.
For every $U$ in $\cU$,
denote by $U|_{H_1}^{H_1}$
the compression of $U$
onto the invariant subspace $H_1$, and put
\[
\cU_1\eqdef
\left\{U|_{H_1}^{H_1}\colon\ U\in\cU\right\},\qquad
J_1\eqdef\{j\in J\colon\ W_j\cap H_1\ne\{0\}\}.
\]
Then
\begin{equation}\label{eq:H1_orthogonal_decomposition}
H_1=\bigoplus_{j\in J_1}(W_j\cap H_1),
\end{equation}
and the family $(W_j\cap H_1)_{j\in J}$ diagonalizes $\cU_1$.
\end{proposition}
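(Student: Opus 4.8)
The plan is to derive both assertions from the commutant description in Proposition~\ref{prop:diagonalizing_family_of_subspaces} together with the observation that $H_1$ is a \emph{reducing} subspace for $\cU$. First I would note that, since $\cU$ is self-adjoint and $H_1$ is invariant under $\cU$, the orthogonal complement $H_1^\perp$ is invariant as well: for $U\in\cU$, $w\in H_1^\perp$, and $v\in H_1$ one has $\langle Uw,v\rangle=\langle w,U^*v\rangle=0$, because $U^*\in\cU$ forces $U^*v\in H_1$. Hence $Uw\in H_1^\perp$, so $H_1$ reduces every $U\in\cU$ and the orthogonal projection $P_{H_1}$ commutes with all of $\cU$; that is, $P_{H_1}\in\cA$. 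By the description~\eqref{eq:conmutant_description}, this immediately gives $P_{H_1}(W_j)\subseteq W_j$ for every $j\in J$.

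Next, for the decomposition~\eqref{eq:H1_orthogonal_decomposition}, I would take $v\in H_1$ and, using $H=\bigoplus_{j\in J}W_j$, write $v=\sum_{j\in J}v_j$ with $v_j=P_jv\in W_j$, where $P_j$ is the orthogonal projection onto $W_j$. Applying $P_{H_1}$ and using $P_{H_1}v=v$ together with $P_{H_1}v_j\in W_j$ yields $\sum_j v_j=\sum_j P_{H_1}v_j$; the uniqueness of the decomposition in the orthogonal sum then forces $v_j=P_{H_1}v_j\in H_1$, so $v_j\in W_j\cap H_1$. Thus $v\in\bigoplus_j(W_j\cap H_1)$, and since the $W_j$ are pairwise orthogonal so are the $W_j\cap H_1$; the nonzero summands are precisely those with $j\in J_1$. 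The reverse inclusion is clear because each $W_j\cap H_1\subseteq H_1$ and $H_1$ is closed, which establishes~\eqref{eq:H1_orthogonal_decomposition}.

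Finally I would verify that $(W_j\cap H_1)_{j\in J_1}$ diagonalizes $\cU_1$ in the sense of Definition~\ref{def:diagonalizing_family_of_subspaces}. Writing $U_1\eqdef U|_{H_1}^{H_1}$, the reducing property gives $(U_1)^*=U^*|_{H_1}^{H_1}\in\cU_1$, so $\cU_1$ is self-adjoint, and for $v\in H_1$ the compression equals the genuine restriction, $U_1v=Uv$. For the first diagonalization condition I set $\la_{U_1,j}\eqdef\la_{U,j}$: if $v\in W_j\cap H_1\subseteq W_j$ then $U_1v=Uv=\la_{U,j}v$, so $W_j\cap H_1\subseteq\ker(\la_{U,j}I-U_1)$. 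For the second condition, given $j\ne k$ in $J_1$, the original family provides $U\in\cU$ with $\la_{U,j}\ne\la_{U,k}$, and the same $U_1$ separates $j$ and $k$ for $\cU_1$.

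The only nonroutine step is the first one: recognizing that $P_{H_1}$ lies in the commutant $\cA$ and invoking~\eqref{eq:conmutant_description} to conclude $P_{H_1}(W_j)\subseteq W_j$. Once each $W_j$ is invariant under $P_{H_1}$, the uniqueness argument in the second paragraph and the checking of the two diagonalization conditions are entirely formal. I would also pause to confirm that the manipulation $\sum_j v_j=\sum_j P_{H_1}v_j$ is valid for infinite $J$, which it is since $P_{H_1}$ is bounded and the series converges in norm.
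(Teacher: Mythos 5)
Your argument is correct. The paper states this proposition without proof (it is part of the ``elementary scheme'' imported from the cited work of Maximenko and Teller\'{i}a-Romero), so there is no proof to compare against; your write-up supplies exactly the intended details. The one genuinely non-formal step is the one you identify: self-adjointness of $\cU$ makes $H_1$ reducing, so $P_{H_1}$ lies in the commutant $\cA$ and, by the description in Proposition~\ref{prop:diagonalizing_family_of_subspaces}, preserves each $W_j$; from there the decomposition of $H_1$ and the two diagonalization conditions follow formally, as you show.
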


\begin{example}
\label{example:finite_rank_in_the_subspace}
The operators of finite rank,
commuting with $U|_{H_1}^{H_1}$
for every $U$ in $\cU$,
are of the form~\eqref{eq:finite_rank},
but with $u_{j_k},v_{j_k}\in W_{j_k}\cap H_1$.
\end{example}

\subsection*{\texorpdfstring{Radial operators in $\boldsymbol{L^2(\bD,\mu_\al)}$}{Radial operators in L2(D,mualpha)}}

For each $\tau$ in $\bT$,
we denote by $\rho^{(\al)}(\tau)$
the rotation operator acting in $L^2(\bD,\mu_\al)$ by the rule
\begin{equation}\label{eq:def_rotation}
(\rho^{(\al)}(\tau)f)(z)
\eqdef f(\tau^{-1}z).
\end{equation}
It is easy to see that
$\rho^{(\al)}(\tau_1\tau_2)
=\rho^{(\al)}(\tau_1)\rho^{(\al)}(\tau_2)$,
the operators $\rho^{(\al)}(\tau)$ are unitary,
and for every $f$ in $L^2(\bD,\mu_\al)$
the mapping $\tau\mapsto\rho^{(\al)}(\tau)f$
is continuous (this is easy to check
first for the case when $f$ is a continuous function with compact support).
So,
$(\rho^{(\al)},L^2(\bD,\mu_\al))$
is a unitary representation of the group $\bT$.
The operators commuting with $\rho^{(\al)}(\tau)$
for every $\tau$ in $\bT$
are called \emph{radial operators}.
We denote the set of all radial operators
in $L^2(\bD,\mu_\al)$ by $\cR^{(\al)}$:
\[
\cR^{(\al)}
\eqdef\{\rho^{(\al)}(\tau)\colon\ \tau\in\bT\}'
=\{S\in\cB(L^2(\bD,\mu_\al))\colon\quad
\forall\tau\in\bT\quad\rho^{(\al)}(\tau)S
=S\rho^{(\al)}(\tau)\}.
\]
Since $\{\rho^{(\al)}(\tau)\colon \tau\in\bT\}$
is an autoadjoint subset of $\cB(L^2(\bD,\mu_\al))$,
its commutant $\cR^{(\al)}$ is a von Neumann algebra~\cite{Sakai1971}.

Recall that the subspaces $\FreqSubspace_\xi^{(\al)}$
are defined by~\eqref{def:fiber_subspace}.

\begin{lemma}\label{lem:freqsubspaces_diagonalize_rotation_operators}
The family $(\FreqSubspace_\xi^{(\al)})_{\xi\in\bZ}$ diagonalizes the collection
$\{\rho^{(\al)}(\tau)\colon\ \tau\in\bT\}$
in the sense of
Definition~\ref{def:diagonalizing_family_of_subspaces}.
\end{lemma}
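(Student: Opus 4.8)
The plan is to check the three requirements of Definition~\ref{def:diagonalizing_family_of_subspaces} in turn, with the collection $\cU=\{\rho^{(\al)}(\tau)\colon\tau\in\bT\}$ and the family $(\FreqSubspace_\xi^{(\al)})_{\xi\in\bZ}$. The underlying orthogonal decomposition $L^2(\bD,\mu_\al)=\bigoplus_{\xi\in\bZ}\FreqSubspace_\xi^{(\al)}$ is already available as Corollary~\ref{cor:decomposition_L2_into_diagonals}, and each subspace is nonzero because it contains the basis vector $\basic{\al}{\max\{\xi,0\}}{\max\{-\xi,0\}}$ by Corollary~\ref{cor:basis_of_FreqSubspace}. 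Hence the family is of the kind demanded in the definition, and what remains is to identify, for each $\tau$, the scalar $\la_{\rho^{(\al)}(\tau),\xi}$ by which $\rho^{(\al)}(\tau)$ acts on $\FreqSubspace_\xi^{(\al)}$, and then to separate distinct frequencies.

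For the first condition I would compute the action of $\rho^{(\al)}(\tau)$ on each frequency subspace directly on the orthonormal basis of Corollary~\ref{cor:basis_of_FreqSubspace}. Writing $z=r\omega$ with $r\ge0$ and $\omega\in\bT$, formula~\eqref{eq:b_via_Q} exhibits $\basic{\al}{\xi+q}{q}(r\omega)$ as the product of $\omega^{\xi}$ with a factor depending on $r$ alone. Since $\tau^{-1}\omega\in\bT$, the definition~\eqref{eq:def_rotation} gives
\[
(\rho^{(\al)}(\tau)\basic{\al}{\xi+q}{q})(r\omega)
=\basic{\al}{\xi+q}{q}(r(\tau^{-1}\omega))
=\tau^{-\xi}\,\basic{\al}{\xi+q}{q}(r\omega),
\]
the radial factor being untouched. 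Thus $\rho^{(\al)}(\tau)$ multiplies every basis element of $\FreqSubspace_\xi^{(\al)}$ by the single scalar $\tau^{-\xi}$, so $\FreqSubspace_\xi^{(\al)}\subseteq\ker(\tau^{-\xi}I-\rho^{(\al)}(\tau))$ and we may set $\la_{\rho^{(\al)}(\tau),\xi}=\tau^{-\xi}$. Equivalently, one may verify this on monomials: using $\tau^{-1}=\overline{\tau}$ a one-line computation yields $\rho^{(\al)}(\tau)m_{p,q}=\tau^{q-p}m_{p,q}$, and since $\FreqSubspace_\xi^{(\al)}$ is generated by the $m_{p,q}$ with $p-q=\xi$, the scalar is again $\tau^{-\xi}$.

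For the second condition, given $\xi\ne\eta$ in $\bZ$ I must exhibit some $\tau\in\bT$ with $\la_{\rho^{(\al)}(\tau),\xi}\ne\la_{\rho^{(\al)}(\tau),\eta}$, that is $\tau^{-\xi}\ne\tau^{-\eta}$, equivalently $\tau^{\eta-\xi}\ne1$. As $\eta-\xi$ is a nonzero integer, the character $\tau\mapsto\tau^{\eta-\xi}$ is nonconstant on $\bT$, so such a $\tau$ exists; concretely any $\tau=\econstant^{\imagunit\theta}$ with $\theta$ not an integer multiple of $2\pi/|\eta-\xi|$ works. There is no real obstacle here: the only point that needs care is tracking the complex conjugate in the rotation, which is exactly what produces the exponent $-\xi$ (rather than $+\xi$) in the eigenvalue. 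Once $\tau^{-\xi}$ has been correctly identified, both conditions of Definition~\ref{def:diagonalizing_family_of_subspaces} are immediate.
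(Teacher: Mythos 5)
Your proof is correct and follows essentially the same route as the paper: both verify condition~1 by computing from~\eqref{eq:b_via_Q} that each basis element $\basic{\al}{\xi+q}{q}$ is an eigenvector of $\rho^{(\al)}(\tau)$ with eigenvalue $\tau^{-\xi}$ and then invoke Corollary~\ref{cor:basis_of_FreqSubspace}, and both verify condition~2 by choosing a $\tau$ with $\tau^{\xi-\eta}\ne 1$ (the paper takes the concrete choice $\tau=\exp\frac{\imagunit\pi}{\xi_1-\xi_2}$). Your additional remarks on the nonvanishing of the subspaces and the alternative verification on monomials are harmless extras.
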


\begin{proof}
1. Let $\tau\in\bT$.
For every $p,q\in\bZ$ with $p-q=\xi$,
formula~\eqref{eq:b_via_Q} implies
\begin{equation}\label{eq:rotation_on_the_basics}
\rho^{(\al)}(\tau) \basic{\al}{p}{q}
= \tau^{q-p} \basic{\al}{p}{q}
=\tau^{-\xi} \basic{\al}{p}{q},
\end{equation}
i.e., $\basic{\al}{p}{q}\in\ker(\tau^{-\xi}I-\rho^{(\al)}(\tau))$.
By~Corollary~\ref{cor:basis_of_FreqSubspace},
the functions $\basic{\al}{p}{q}$
with $p-q=\xi$
form an orthonormal basis of $\FreqSubspace_\xi^{(\al)}$.
So,
\begin{equation}\label{eq:freqsubspaces_subspaces_as_eigensubspaces}
\FreqSubspace_\xi^{(\al)}
\subseteq
\ker(\tau^{-\xi} I-\rho^{(\al)}(\tau)).
\end{equation}
2. Let $\xi_1,\xi_2\in\bZ$ and $\xi_1\ne\xi_2$.
Put
$\tau=\exp\frac{\imagunit\pi}{\xi_1-\xi_2}$.
Then $\tau^{-\xi_1}\ne\tau^{-\xi_2}$.
\end{proof}

\begin{proposition}\label{prop:radial_operators_on_L2}
The von Neumann algebra $\cR^{(\al)}$
consists of all operators that act invariantly on $\FreqSubspace_\xi^{(\al)}$
for every $\xi$ in $\bZ$,
and is isometrically isomorphic to $\bigoplus_{\xi\in\bZ}\cB(\FreqSubspace_\xi^{(\al)})$.
\end{proposition}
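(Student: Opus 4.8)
The plan is to apply the abstract machinery of Proposition~\ref{prop:diagonalizing_family_of_subspaces} directly. By Lemma~\ref{lem:freqsubspaces_diagonalize_rotation_operators}, the family $(\FreqSubspace_\xi^{(\al)})_{\xi\in\bZ}$ diagonalizes the self-adjoint collection $\cU\eqdef\{\rho^{(\al)}(\tau)\colon\tau\in\bT\}$ in the sense of Definition~\ref{def:diagonalizing_family_of_subspaces}. The algebra $\cR^{(\al)}$ is by definition the commutant of $\cU$. Therefore the two assertions of the present proposition---that $\cR^{(\al)}$ consists exactly of the operators acting invariantly on each $\FreqSubspace_\xi^{(\al)}$, and that it is isometrically isomorphic to $\bigoplus_{\xi\in\bZ}\cB(\FreqSubspace_\xi^{(\al)})$---are precisely the two conclusions of Proposition~\ref{prop:diagonalizing_family_of_subspaces} specialized to $H=L^2(\bD,\mu_\al)$, $\cU$ as above, and the index set $J=\bZ$.

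The only thing I need to check before invoking Proposition~\ref{prop:diagonalizing_family_of_subspaces} is that its hypotheses are met. First, $\cU$ is self-adjoint: each $\rho^{(\al)}(\tau)$ is unitary with $\rho^{(\al)}(\tau)^*=\rho^{(\al)}(\tau)^{-1}=\rho^{(\al)}(\tau^{-1})\in\cU$, as noted in the text preceding Lemma~\ref{lem:freqsubspaces_diagonalize_rotation_operators}. Second, the family $(\FreqSubspace_\xi^{(\al)})_{\xi\in\bZ}$ must consist of nonzero closed subspaces whose orthogonal sum is all of $H$; the closedness and nonvanishing are immediate from the definition~\eqref{def:fiber_subspace} together with Corollary~\ref{cor:basis_of_FreqSubspace}, and the orthogonal decomposition $L^2(\bD,\mu_\al)=\bigoplus_{\xi\in\bZ}\FreqSubspace_\xi^{(\al)}$ is exactly Corollary~\ref{cor:decomposition_L2_into_diagonals}. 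Third, the index set $\bZ$ is countable. With these verified, the diagonalization property supplied by Lemma~\ref{lem:freqsubspaces_diagonalize_rotation_operators} completes the hypotheses of the proposition.

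I do not expect any genuine obstacle here: the substantive work has already been done in establishing the Fourier decomposition (Corollary~\ref{cor:decomposition_L2_into_diagonals}) and in verifying the diagonalization condition (Lemma~\ref{lem:freqsubspaces_diagonalize_rotation_operators}). The present proof is a bookkeeping step that matches the concrete setting to the abstract template. The only point deserving a word of care is the identification $\cR^{(\al)}=\cU'$, which holds by the very definition of radial operators given in the text, so that the commutant $\cA$ appearing in Proposition~\ref{prop:diagonalizing_family_of_subspaces} is literally $\cR^{(\al)}$. Hence the proof reduces to a single sentence citing Lemma~\ref{lem:freqsubspaces_diagonalize_rotation_operators} and applying Proposition~\ref{prop:diagonalizing_family_of_subspaces}.
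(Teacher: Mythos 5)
Your proposal is correct and follows exactly the paper's own route: the paper's proof is the one-line citation of Proposition~\ref{prop:diagonalizing_family_of_subspaces} together with Lemma~\ref{lem:freqsubspaces_diagonalize_rotation_operators}, and your additional verification of the hypotheses (self-adjointness of $\cU$, the orthogonal decomposition from Corollary~\ref{cor:decomposition_L2_into_diagonals}, and the identification $\cR^{(\al)}=\cU'$) is accurate bookkeeping that the paper leaves implicit.
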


\begin{proof}
Follows from
Proposition~\ref{prop:diagonalizing_family_of_subspaces}
and Lemma~\ref{lem:freqsubspaces_diagonalize_rotation_operators}.
\end{proof}

The \emph{radialization transform}
$\Radialization^{(\al)}\colon\cB(L^2(\bD,\mu_\al))
\to\cB(L^2(\bD,\mu_\al))$,
introduced by Zorboska~\cite{Zorboska2003},
acts by the rule
\[
\Radialization^{(\al)}(S)
\eqdef \int_\bT \rho(\tau) S \rho(\tau^{-1}) \,\dif\mu_\bT(\tau),
\]
where $\mu_\bT$ is the normalized Haar measure on $\bT$, and the integral is understood in the weak sense.
The condition $S\in\cR^{(\al)}$
is equivalent to $\Radialization^{(\al)}(S)=S$.

\subsection*{\texorpdfstring{Radial operators in $\boldsymbol{\cA_n^2(\bD,\mu_\al)}$}{Radial operators in An2(D,mualpha)}}

\begin{proposition}\label{prop:polyBergman_is_rotation_invariant}
The space $\cA_n^2(\bD,\mu_\al)$
is invariant under every rotation $\rho^{(\al)}(\tau)$, $\tau\in\bT$.
\end{proposition}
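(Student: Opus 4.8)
The plan is to verify two things: that a rotation of an $n$-analytic function is again $n$-analytic, and that it stays square-integrable with respect to $\mu_\al$. The second point is immediate because the weight $\frac{\al+1}{\pi}(1-|z|^2)^\al$ depends only on $|z|$, so $\mu_\al$ is rotation invariant and each $\rho^{(\al)}(\tau)$ is unitary on $L^2(\bD,\mu_\al)$, as already noted. Thus the whole issue reduces to the preservation of polyanalyticity, and the cleanest way to package both facts together is through the orthonormal basis already constructed.

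First I would invoke Proposition~\ref{prop:basis_in_An}, by which $\cA_n^2(\bD,\mu_\al)$ is the closed linear span in $L^2(\bD,\mu_\al)$ of the family $(\basic{\al}{p}{q})_{p\in\bNz,\ 0\le q<n}$. Formula~\eqref{eq:rotation_on_the_basics} in Lemma~\ref{lem:freqsubspaces_diagonalize_rotation_operators} shows that each of these basis elements is an eigenvector of $\rho^{(\al)}(\tau)$:
\[
\rho^{(\al)}(\tau)\basic{\al}{p}{q}=\tau^{q-p}\basic{\al}{p}{q}.
\]
In particular $\rho^{(\al)}(\tau)\basic{\al}{p}{q}$ lies again in $\cA_n^2(\bD,\mu_\al)$, so the dense subspace $\lin\{\basic{\al}{p}{q}\colon p\in\bNz,\ 0\le q<n\}$ is invariant under $\rho^{(\al)}(\tau)$. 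Since $\rho^{(\al)}(\tau)$ is bounded and $\cA_n^2(\bD,\mu_\al)$ is the closure of this subspace, invariance passes to the closure: $\rho^{(\al)}(\tau)\bigl(\cA_n^2(\bD,\mu_\al)\bigr)\subseteq\cA_n^2(\bD,\mu_\al)$.

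As an alternative self-contained argument that does not rely on the full basis, I would use the expansion~\eqref{aux:poly_expansion}: writing $g(z)\eqdef f(\tau^{-1}z)$ and applying the Wirtinger chain rule with $w=\tau^{-1}z$ (so that $\conjw=\tau\conjz$, because $|\tau|=1$), each application of $\partial/\partial\conjz$ contributes a factor $\tau$, whence $\partial^n g/\partial\conjz^n=\tau^n\,(\partial^n f/\partial\conjw^n)=0$; combined with the rotation invariance of $\mu_\al$ this again gives $g\in\cA_n^2(\bD,\mu_\al)$.

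I do not expect any genuine obstacle here. The only point deserving care is that invariance of the generating subspace is not by itself enough: one must use the boundedness (indeed unitarity) of $\rho^{(\al)}(\tau)$ together with the fact that $\cA_n^2(\bD,\mu_\al)$ is a closed subspace of $L^2(\bD,\mu_\al)$ (it is a RKHS) in order to conclude invariance of the whole space.
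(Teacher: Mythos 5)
Your main argument coincides with the paper's own second proof: the basis elements $\basic{\al}{p}{q}$ with $0\le q<n$ are eigenfunctions of $\rho^{(\al)}(\tau)$ by Proposition~\ref{prop:basis_in_An} and formula~\eqref{eq:rotation_on_the_basics}, and invariance of the closed span follows; you merely spell out the density/boundedness step that the paper leaves implicit, and your Wirtinger-chain-rule remark is a correct elementary substitute. (The paper also records a first proof via the rotation invariance of the reproducing kernel $K^{(\al)}_{n,z}$ in~\eqref{eq:Kn_and_rotations}, but that route is not needed.)
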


\begin{proof}[First proof]
The reproducing kernel of $\cA_n^2(\bD,\mu_\al)$,
given by \eqref{eq:RK_weighted_poly_Bergman_on_disk},
is invariant under simultaneous rotations in both arguments:
\begin{equation}\label{eq:Kn_and_rotations}
K^{(\al)}_{n,\tau z}(\tau w)
=K^{(\al)}_{n,z}(w)\qquad
(z,w\in\bD,\ \tau\in\bT).
\end{equation}
According to~\cite[Proposition~4]{MaximenkoTelleriaRomero2020},
this implies the invariance of the subspace.
\end{proof}

\begin{proof}[Second proof]
By~\eqref{eq:def_rotation},
the elements of the orthonormal basis~$(\basic{\al}{p}{q})_{p\in\bNz,0\le q<n}$ are eigenfunctions of~$\rho^{(\al)}$.
\end{proof}

For every $\tau$ in $\bT$,
we denote by $\rho_n^{(\al)}(\tau)$
the compression of $\rho^{(\al)}(\tau)$
onto the space $\cA_n^2(\bD,\mu_\al)$.
In other words,
the operator $\rho_n^{(\al)}(\tau)$
acts in $\cA_n^2(\bD,\mu_\al)$
and is defined by \eqref{eq:def_rotation}.
So, $(\rho_n^{(\al)},\cA_n^2(\bD,\mu_\al))$
is a unitary representation of $\bT$.
We denote by $\cR_n^{(\al)}$
the commutant of this representation,
i.e., the von Neumann algebra
of all bounded linear radial operators
acting in $\cA_n^2(\bD,\mu_\al)$.

Denote by $\fM_n$ the following direct sum of matrix algebras:
\[
\fM_n \eqdef \bigoplus_{\xi=-n+1}^\infty \Mat_{\min\{n,n+\xi\}}
= \left(\bigoplus_{\xi=-n+1}^{-1} \Mat_{n+\xi}\right)
\oplus
\left(\bigoplus_{\xi=0}^{\infty} \Mat_n\right).
\]
For example,
\[
\fM_3
=\underbrace{\Mat_1}_{\xi=-2}
\oplus\underbrace{\Mat_2}_{\xi=-1}
\oplus\underbrace{\Mat_3}_{\xi=0}
\oplus\underbrace{\Mat_3}_{\xi=1}
\oplus\underbrace{\Mat_3}_{\xi=2}
\oplus\dots.
\]
According to the definition of the direct sum (see~\cite[Definition 1.1.5]{Sakai1971}),
$\fM_n$ consists of all matrix sequences
of the form
$A=(A_\xi)_{\xi=-n+1}^\infty$,
where $A_\xi\in\Mat_{n+\xi}$ if $\xi<0$, $A_\xi\in\Mat_n$ if $\xi\ge0$, and
\[
\sup_{\xi\ge-n+1}\|A_\xi\|<+\infty.
\]
Being a direct sum of W*-algebras,
$\fM_n$ is a W*-algebra.
We identify the elements of $\fM_n$
with the bounded linear operators acting in $\bigoplus_{\xi=-n+1}^\infty\bC^{\min\{n,n+\xi\}}$.
Now we are ready to describe
the structure of $\cR_n^{(\al)}$.
Recall that $U_n^{(\al)}$ is given by~\eqref{eq:Un_definition}.

\begin{theorem}\label{thm:radial_polyanalytic_Bergman}
Let $n\in\bN$.
Then $\cR_n^{(\al)}$
consists of all operators belonging to $\cB(\cA_n^2(\bD,\mu_\al))$
that act invariantly on each subspace
$\FreqSubspace^{(\al)}_{\xi,\min\{n,n+\xi\}}$, for $\xi\ge-n+1$.
Furthermore,
\[
\cR_n^{(\al)}
\cong\bigoplus_{\xi=-n+1}^\infty \cB(\FreqSubspace^{(\al)}_{\xi,\min\{n,n+\xi\}}),
\]
and $\cR_n^{(\al)}$ is spatially isomorphic to $\fM_n$:
\[
U_n^{(\al)} \cR_n^{(\al)} (U_n^{(\al)})^\ast
=\fM_n.
\]
\end{theorem}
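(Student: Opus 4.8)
The plan is to obtain the theorem as a direct application of the general scheme of Propositions~\ref{prop:diagonalizing_family_in_subspace} and~\ref{prop:diagonalizing_family_of_subspaces}, by restricting the rotation representation from $L^2(\bD,\mu_\al)$ to $\cA_n^2(\bD,\mu_\al)$ and then reading off the commutant. Concretely, I would set $H=L^2(\bD,\mu_\al)$, $\cU=\{\rho^{(\al)}(\tau)\colon\tau\in\bT\}$, and take the diagonalizing family to be $(\FreqSubspace_\xi^{(\al)})_{\xi\in\bZ}$, which diagonalizes $\cU$ by Lemma~\ref{lem:freqsubspaces_diagonalize_rotation_operators}. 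The subspace $H_1=\cA_n^2(\bD,\mu_\al)$ is invariant under every $\rho^{(\al)}(\tau)$ by Proposition~\ref{prop:polyBergman_is_rotation_invariant}, and the compression of $\cU$ onto $H_1$ is exactly $\cU_1=\{\rho_n^{(\al)}(\tau)\colon\tau\in\bT\}$, whose commutant in $\cB(\cA_n^2(\bD,\mu_\al))$ is by definition $\cR_n^{(\al)}$.

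Next I would apply Proposition~\ref{prop:diagonalizing_family_in_subspace}. By formula~\eqref{eq:truncated_freqsubspace_as_intersection}, the intersection $\FreqSubspace_\xi^{(\al)}\cap H_1$ equals $\FreqSubspace^{(\al)}_{\xi,\min\{n,n+\xi\}}$ when $\xi\ge-n+1$ and $\{0\}$ otherwise, so the index set $J_1$ is precisely $\{\xi\in\bZ\colon\xi\ge-n+1\}$. Consequently $H_1=\bigoplus_{\xi\ge-n+1}\FreqSubspace^{(\al)}_{\xi,\min\{n,n+\xi\}}$ (recovering Proposition~\ref{prop:decomposition_An_into_truncated_freqsubspaces}), and the family $(\FreqSubspace^{(\al)}_{\xi,\min\{n,n+\xi\}})_{\xi\ge-n+1}$ diagonalizes $\cU_1$. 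Then Proposition~\ref{prop:diagonalizing_family_of_subspaces}, applied to $\cU_1$ acting in $\cA_n^2(\bD,\mu_\al)$, yields the first two assertions at once: its commutant $\cR_n^{(\al)}$ consists exactly of the operators that act invariantly on each $\FreqSubspace^{(\al)}_{\xi,\min\{n,n+\xi\}}$ with $\xi\ge-n+1$, and $\cR_n^{(\al)}\cong\bigoplus_{\xi=-n+1}^\infty\cB(\FreqSubspace^{(\al)}_{\xi,\min\{n,n+\xi\}})$.

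For the spatial statement I would transport everything through the isometric isomorphism $U_n^{(\al)}$ of Proposition~\ref{prop:An_to_vector_sequence}. By Corollary~\ref{cor:basis_of_truncated_FreqSubspace}, the vectors $\basic{\al}{q+\xi}{q}$ form an orthonormal basis of $\FreqSubspace^{(\al)}_{\xi,\min\{n,n+\xi\}}$, and by the definition~\eqref{eq:Un_definition} of $U_n^{(\al)}$ these are sent to the standard basis vectors of the summand $\bC^{\min\{n,n+\xi\}}$; hence $U_n^{(\al)}$ maps $\FreqSubspace^{(\al)}_{\xi,\min\{n,n+\xi\}}$ onto $\bC^{\min\{n,n+\xi\}}$. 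Therefore $S\mapsto U_n^{(\al)}S(U_n^{(\al)})^\ast$ carries an operator that leaves every $\FreqSubspace^{(\al)}_{\xi,\min\{n,n+\xi\}}$ invariant to an operator on $\bigoplus_{\xi\ge-n+1}\bC^{\min\{n,n+\xi\}}$ that leaves every summand invariant, i.e. to a block-diagonal matrix sequence. Since $U_n^{(\al)}$ is isometric, boundedness of $S$ is equivalent to the uniform bound $\sup_\xi\|A_\xi\|<+\infty$ defining $\fM_n$, so the image is exactly $\fM_n$ and $U_n^{(\al)}\cR_n^{(\al)}(U_n^{(\al)})^\ast=\fM_n$.

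The argument is essentially bookkeeping on top of the machinery already established, so I do not expect a genuine analytic obstacle. The only points requiring care are the identification of the compressed family $\cU_1$ with the rotation representation $\rho_n^{(\al)}$ (so that its commutant is really $\cR_n^{(\al)}$, not merely a commutant of some abstract restriction), and the verification that $U_n^{(\al)}$ matches the internal orthogonal decomposition of $\cA_n^2(\bD,\mu_\al)$ summand-by-summand with the block structure of $\fM_n$, including the correct enumeration of the components $\bC^{n+\xi}$ for $-n+1\le\xi<0$.
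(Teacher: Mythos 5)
Your proposal is correct and follows essentially the same route as the paper: it applies the scheme of Propositions~\ref{prop:diagonalizing_family_of_subspaces} and~\ref{prop:diagonalizing_family_in_subspace} with $\cU=\{\rho^{(\al)}(\tau)\}$, $H_1=\cA_n^2(\bD,\mu_\al)$, uses~\eqref{eq:truncated_freqsubspace_as_intersection} to identify the intersections, and then realizes the isomorphism with $\fM_n$ via the bases of Corollary~\ref{cor:basis_of_truncated_FreqSubspace} and the unitary $U_n^{(\al)}$, exactly as in the paper's map $\Phi_n^{(\al)}(S)=U_n^{(\al)}S(U_n^{(\al)})^\ast$.
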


\begin{proof}
We apply the scheme from Propositions~\ref{prop:diagonalizing_family_of_subspaces},
\ref{prop:diagonalizing_family_in_subspace},
\[
W_j=\FreqSubspace_\xi^{(\al)},\qquad
\cU=\{\rho^{(\al)}(\tau)\colon\ \tau\in\bT\},
\]
and $H_1=\cA_n^2(\bD,\mu_\al)$.
By~\eqref{eq:truncated_freqsubspace_as_intersection}, we obtain
\[
J_1=\{\xi\in\bZ\colon\ \xi\ge -n+1\},\qquad
\cA_n^2(\bD,\mu_\al)\cap \FreqSubspace_\xi^{(\al)}
=
\FreqSubspace^{(\al)}_{\xi,\min\{n,n+\xi\}}.
\]
So, the W*-algebra $\cR_n^{(\al)}$
is isometrically isomorphic to the direct sum
of $\cB(\FreqSubspace^{(\al)}_{\xi,\min\{n,n+\xi\}})$, with $\xi\ge-n+1$.
Using the orthonormal basis $(\basic{\al}{\xi+k}{k})_{k=\max\{0,-\xi\}}^{n-1}$
of $\FreqSubspace^{(\al)}_{\xi,\min\{n,n+\xi\}}$,
we represent linear operators
on this space as matrices.
Define $\Phi_n^{(\al)}\colon\cR_n^{(\al)}\to\fM_n$ by
\begin{equation}\label{eq:Phi}
\Phi_n^{(\al)}(S)
\eqdef
\left(\left[\left\langle S\basic{\al}{\xi+k}{k},\basic{\al}{\xi+j}{j}\right\rangle\right]_{j,k=\max\{0,-\xi\}}^{n-1}
\right)_{\xi=-n+1}^{\infty}.
\end{equation}
In other words, $\Phi_n^{(\al)}(S)=U_n^{(\al)} S (U_n^{(\al)})^\ast$, i.e., $\Phi_n^{(\al)}$ is an isometrical isomorphism of W*-algebras induced by the unitary operator $U_n^{(\al)}$.
\end{proof}

Radial operators of finite rank, acting in $\cA_n^2(\bD,\mu_\al)$, can be constructed as in Examples~\ref{example:finite_rank} and \ref{example:finite_rank_in_the_subspace}.

It is easy to verify
(see a more general result in
\cite[Corollary~4.3]{MaximenkoTelleriaRomero2020})
that if $\cA_n^2=\cA_n^2(\bD,\mu_\al)$
and $S\in\cR_n^{(\al)}$,
then $\Berezin_{\cA_n^2}(S)$
is a radial function.
For $n=1$, the Berezin transform $\Berezin_{\cA_1^2}$ is injective.
So, if $S\in\cB(\cA_1^2(\bD,\mu_\al))$
and the function $\Berezin_{\cA_1^2}(S)$
is radial, then the operator $S$ is radial.

\subsection*{\texorpdfstring{Radial operators in $\boldsymbol{\cA_{(n)}^2(\bD,\mu_\al)}$}{Radial operators in An2(D,mualpha)}}

Let $n\in\bN$.
The space $\cA^2_{(n)}(\bD,\mu_\al)$
is invariant under
the rotation $\rho^{(\al)}(\tau)$
for all $\tau$ in $\bT$.
The proof is similar to the proof of Proposition~\ref{prop:polyBergman_is_rotation_invariant}.
Denote the compression of $\rho^{(\al)}(\tau)$ onto $\cA^2_{(n)}(\bD,\mu_\al)$
by $\rho_{(n)}(\tau)$.
Let $\cR^{(\al)}_{(n)}$ be the von Neumann algebra of all radial operators in $\cA_{(n)}^2(\bD,\mu_\al)$.

\begin{theorem}\label{thm:radial_true_polyanalytic_Bergman}
$\cR^{(\al)}_{(n)}$ consists of all operators
belonging to $\cB(\cA_{(n)}^2(\bD,\mu_\al))$
that are diagonal with respect to the orthonormal basis
$(\basic{\al}{p}{n-1})_{p=0}^\infty$.
Furthermore,
\[
\cR^{(\al)}_{(n)}
\cong \ell^\infty(\bNz).
\]
\end{theorem}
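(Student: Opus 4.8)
The plan is to reuse the diagonalizing-family scheme exactly as in the proof of Theorem~\ref{thm:radial_polyanalytic_Bergman}, specialized to the true-polyanalytic space, where the relevant intersection subspaces turn out to be one-dimensional. First I would set $\cU=\{\rho^{(\al)}(\tau)\colon\tau\in\bT\}$ and recall from Lemma~\ref{lem:freqsubspaces_diagonalize_rotation_operators} that the family $(\FreqSubspace_\xi^{(\al)})_{\xi\in\bZ}$ diagonalizes $\cU$ in $L^2(\bD,\mu_\al)$. Then I would apply Proposition~\ref{prop:diagonalizing_family_in_subspace} with the invariant subspace $H_1=\cA_{(n)}^2(\bD,\mu_\al)$, whose rotation-invariance is recorded just before the statement.

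The key computation is the intersection $\FreqSubspace_\xi^{(\al)}\cap\cA_{(n)}^2(\bD,\mu_\al)$. By Corollary~\ref{cor:basis_in_true_polyanalytic}, the functions $\basic{\al}{p}{n-1}$, $p\in\bNz$, form an orthonormal basis of $\cA_{(n)}^2(\bD,\mu_\al)$, and by~\eqref{eq:rotation_on_the_basics} each $\basic{\al}{p}{n-1}$ lies in $\FreqSubspace_{p-(n-1)}^{(\al)}$. Applying Proposition~\ref{prop:basis_of_intersection} in $L^2(\bD,\mu_\al)$ to the basis subset of frequency $\xi$ and the basis subset $\{\basic{\al}{p}{n-1}\colon p\in\bNz\}$, and using that $p\mapsto\xi=p-(n-1)$ is a bijection from $\bNz$ onto $\{\xi\in\bZ\colon\xi\ge-n+1\}$, I would obtain
\[
\FreqSubspace_\xi^{(\al)}\cap\cA_{(n)}^2(\bD,\mu_\al)
=\begin{cases}
\lin\{\basic{\al}{\xi+n-1}{n-1}\}, & \xi\ge-n+1;\\
\{0\}, & \xi<-n+1.
\end{cases}
\]
In particular every nonzero intersection is one-dimensional.

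With this in hand, Proposition~\ref{prop:diagonalizing_family_in_subspace} shows that the family $(\FreqSubspace_\xi^{(\al)}\cap\cA_{(n)}^2(\bD,\mu_\al))_{\xi\ge-n+1}$ of lines diagonalizes the compressed collection $\{\rho_{(n)}(\tau)\colon\tau\in\bT\}$, and Proposition~\ref{prop:diagonalizing_family_of_subspaces}, applied inside $\cA_{(n)}^2(\bD,\mu_\al)$, identifies $\cR^{(\al)}_{(n)}$ with the operators leaving each such line invariant. Since an operator leaves every line $\lin\{\basic{\al}{p}{n-1}\}$ invariant if and only if it is diagonal with respect to the basis $(\basic{\al}{p}{n-1})_{p=0}^\infty$, this yields the first assertion. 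The same proposition furnishes the isometric isomorphism $\cR^{(\al)}_{(n)}\cong\bigoplus_{\xi\ge-n+1}\cB(\FreqSubspace_\xi^{(\al)}\cap\cA_{(n)}^2(\bD,\mu_\al))$; as each summand is $\cB$ of a one-dimensional space, hence isometrically $\bC$, and the index set is countably infinite, the direct sum with its supremum-norm definition \cite[Definition~1.1.5]{Sakai1971} is exactly $\ell^\infty(\bNz)$. I do not expect a genuine obstacle here; the scheme does all the work, and the only point requiring care is bookkeeping the bijection $p\leftrightarrow\xi$ so that the countable direct sum of scalar factors is correctly reindexed by $\bNz$.
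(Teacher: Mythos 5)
Your proposal is correct and follows essentially the same route as the paper: the paper likewise computes $\FreqSubspace_\xi^{(\al)}\cap\cA_{(n)}^2(\bD,\mu_\al)=\bC\,\basic{\al}{\xi+n-1}{n-1}$ for $\xi\ge-n+1$ (and $\{0\}$ otherwise) from Corollaries~\ref{cor:basis_of_FreqSubspace} and~\ref{cor:basis_in_true_polyanalytic}, then applies Propositions~\ref{prop:diagonalizing_family_of_subspaces} and~\ref{prop:diagonalizing_family_in_subspace} to conclude that radial operators are exactly those diagonal in the basis $(\basic{\al}{p}{n-1})_{p=0}^\infty$, giving $\cR^{(\al)}_{(n)}\cong\ell^\infty(\bNz)$.
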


\begin{proof}
Corollaries~\ref{cor:basis_of_FreqSubspace} and \ref{cor:basis_in_true_polyanalytic} give
\begin{equation}\label{eq:intersection_diagonal_with_column}
\FreqSubspace_\xi^{(\al)}\cap\cA_{(n)}^2(\bD,\mu_\al)
=\begin{cases}
\bC \basic{\al}{\xi+n-1}{n-1}, & \xi\ge-n+1,\\
\{0\}, & \xi<-n+1.
\end{cases}
\end{equation}
By~Propositions~\ref{prop:diagonalizing_family_of_subspaces},
\ref{prop:diagonalizing_family_in_subspace}
and formula~\eqref{eq:intersection_diagonal_with_column},
$\cR^{(\al)}_{(n)}$ consists of the operators
that act invariantly on
$\bC \basic{\al}{\xi+n-1}{n-1}$, $\xi\ge-n+1$,
i.e., are diagonal with respect to the basis
$(\basic{\al}{p}{n-1})_{p=0}^\infty$.
Therefore the function
$\Phi^{(\al)}_{(n)}\colon\cR^{(\al)}_{(n)}\to\ell^\infty(\bNz)$,
defined by
\begin{equation}\label{eq:Phi_true}
\Phi^{(\al)}_{(n)}(S)
=\bigl(\langle S \basic{\al}{p}{n-1},\basic{\al}{p}{n-1}\rangle\bigr)_{p=0}^\infty,
\end{equation}
is an isometric isomorphism.
\end{proof}

\section{Radial Toeplitz operators in polyanalytic Bergman spaces}
\label{sec:Toeplitz}

This section is similar to \cite[Section~6]{MaximenkoTelleriaRomero2020},
but here we use Jacobi polynomials
instead of the generalized Laguerre polynomials.

\subsection*{Radial functions}

Given $g$ in $L^\infty(\bD)$,
define $\radialization(g)\colon \bD\to\bC$ by
\begin{equation}\label{eq:def_rad}
\radialization(g)(z)
\eqdef\int_\bT g(\tau z)\,\dif\mu_\bT(\tau).
\end{equation}
Given $a$ in $L^\infty([0,1))$,
define $\widetilde{a}\colon\bD\to\bC$ by
\[
\widetilde{a}(z)\eqdef a(|z|)\qquad(z\in\bD).
\]
The proof of the following criterion is a simple exercise.

\begin{proposition}\label{prop:radial_criterion}
Given $g$ in $L^\infty(\bD)$,
the following conditions are equivalent:
\begin{itemize}
\item[(a)] for every $\tau$ in $\bT$,
the equality $g(\tau z)=g(z)$ is true for a.e. $z$ in $\bD$;
\item[(b)] for every $\tau$ in $\bT$,
the equality $\rho^{(\al)}(\tau)g=g$
is true a.e.;
\item[(c)] $\radialization(g)=g$ a.e.;
\item[(d)] there exists $a$ in $L^\infty([0,1))$ such that $g=\widetilde{a}$ a.e.
\end{itemize}
\end{proposition}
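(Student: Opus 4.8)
The plan is to establish the cycle of implications (d)$\Rightarrow$(a)$\Leftrightarrow$(b), together with (a)$\Rightarrow$(c)$\Rightarrow$(d), which closes the loop among all four conditions. Three of the four steps are essentially immediate; the only one demanding genuine care is the interchange of quantifiers in (a)$\Rightarrow$(c).

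First I would dispatch the two trivial links. The equivalence (a)$\Leftrightarrow$(b) follows directly from the definition~\eqref{eq:def_rotation}: since $(\rho^{(\al)}(\tau)g)(z)=g(\tau^{-1}z)$, the statement $\rho^{(\al)}(\tau)g=g$ a.e.\ is the same as $g(\tau^{-1}z)=g(z)$ a.e., and letting $\tau$ range over all of $\bT$ this is precisely (a). For (d)$\Rightarrow$(a) I would simply note that if $g=\widetilde{a}$ a.e.\ with $a\in L^\infty([0,1))$, then $g(z)=a(|z|)$ a.e., and because $|\tau z|=|z|$ for every $\tau\in\bT$, we get $g(\tau z)=a(|\tau z|)=a(|z|)=g(z)$.

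The main obstacle is the step (a)$\Rightarrow$(c), where the hypothesis gives, for \emph{each fixed} $\tau$, a null exceptional set of $z$, and I must convert this into a statement valid for a.e.\ $z$ simultaneously over $\tau$. The plan is to invoke Fubini's theorem on the product space $\bT\times\bD$ with the finite measure $\mu_\bT\times\mu_\al$: condition (a) shows that $\{(\tau,z)\colon g(\tau z)\ne g(z)\}$ is a null set of the product, so for $\mu_\al$-a.e.\ $z$ the equality $g(\tau z)=g(z)$ holds for $\mu_\bT$-a.e.\ $\tau$. For every such $z$ one then computes $\radialization(g)(z)=\int_\bT g(\tau z)\,\dif\mu_\bT(\tau)=\int_\bT g(z)\,\dif\mu_\bT(\tau)=g(z)$, which is (c). The integrability required for Fubini is automatic since $g\in L^\infty(\bD)$ and all measures involved are finite.

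Finally, for (c)$\Rightarrow$(d) I would exploit that $\radialization(g)$ is \emph{genuinely} radial, not merely radial almost everywhere. Using the translation invariance of the Haar measure, for every $\tau'\in\bT$ and every $z\in\bD$,
\[
\radialization(g)(\tau' z)
=\int_\bT g(\tau\tau' z)\,\dif\mu_\bT(\tau)
=\int_\bT g(\sigma z)\,\dif\mu_\bT(\sigma)
=\radialization(g)(z),
\]
so $\radialization(g)$ depends only on $|z|$. Setting $a(r)\eqdef\radialization(g)(r)$ for $r\in[0,1)$ gives $\radialization(g)=\widetilde{a}$ identically, with $\|a\|_\infty\le\|g\|_\infty$ so that $a\in L^\infty([0,1))$; combining this with the hypothesis $g=\radialization(g)$ a.e.\ yields $g=\widetilde{a}$ a.e., which is (d).
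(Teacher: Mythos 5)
The paper gives no proof of this proposition, dismissing it as ``a simple exercise,'' so there is no argument of the authors to compare against; your proof is a correct way of filling in that exercise. The implication cycle (a)$\Leftrightarrow$(b), (d)$\Rightarrow$(a)$\Rightarrow$(c)$\Rightarrow$(d) does close the loop, and the one genuinely nontrivial step --- upgrading the ``for each $\tau$, a.e.\ $z$'' of (a) to the ``for a.e.\ $z$, a.e.\ $\tau$'' needed to evaluate $\radialization(g)(z)$ --- is handled correctly by Fubini on $(\bT\times\bD,\mu_\bT\times\mu_\al)$. The only points you gloss over are routine: one should fix a bounded Borel representative of $g$ so that $(\tau,z)\mapsto g(\tau z)$ is product-measurable (and so that $\radialization(g)$ is defined at \emph{every} $z$, which your (c)$\Rightarrow$(d) argument uses); the measurability of $a(r)=\radialization(g)(r)$ on $[0,1)$ should be obtained from Fubini in polar coordinates rather than by restricting a planar measurable function to a segment; and in (d)$\Rightarrow$(a) you implicitly use that the rotate $\tau^{-1}N$ of the null set $N=\{z\colon g(z)\ne\widetilde{a}(z)\}$ is again $\mu_\al$-null. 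None of these affects the validity of the argument.
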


\subsection*{Radial multiplication operators in $L^2(\bD,\mu_\al)$}

\begin{proposition}\label{prop:radialization_of_multiplication_operator}
Let $g\in L^\infty(\bD)$.
Then
$\Radialization^{(\al)}(M_g)
=M_{\radialization(g)}^{(\al)}$.
\end{proposition}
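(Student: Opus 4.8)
The statement to prove is that $\Radialization^{(\al)}(M_g) = M_{\radialization(g)}^{(\al)}$ for $g \in L^\infty(\bD)$. The plan is to unravel both sides as sesquilinear forms and compare them directly. Since $\Radialization^{(\al)}(M_g)$ is defined by a weakly convergent integral, the natural approach is to test both operators against an arbitrary pair $f_1, f_2$ in $L^2(\bD,\mu_\al)$ and show that $\langle \Radialization^{(\al)}(M_g) f_1, f_2 \rangle = \langle M_{\radialization(g)}^{(\al)} f_1, f_2 \rangle$. Because $L^2(\bD,\mu_\al)$ is a Hilbert space, equality of all such forms forces equality of the operators.

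First I would write out the left-hand form using the definition of the radialization transform:
\[
\langle \Radialization^{(\al)}(M_g) f_1, f_2 \rangle
= \int_\bT \langle \rho^{(\al)}(\tau) M_g \rho^{(\al)}(\tau^{-1}) f_1, f_2 \rangle \, \dif\mu_\bT(\tau).
\]
Since each $\rho^{(\al)}(\tau)$ is unitary, I would move one factor to the other side of the inner product, obtaining $\langle M_g \rho^{(\al)}(\tau^{-1}) f_1, \rho^{(\al)}(\tau^{-1}) f_2 \rangle$. Writing this out as an integral over $\bD$ with the definition $(\rho^{(\al)}(\tau^{-1})f)(z) = f(\tau z)$ and the multiplication $M_g f = gf$, the integrand becomes $g(z) f_1(\tau z) \overline{f_2(\tau z)}$. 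The key step is then the change of variables $w = \tau z$ inside the $\bD$-integral; since $\mu_\al$ is rotation-invariant (its weight depends only on $|z|$), this substitution leaves the measure unchanged and converts the integrand to $g(\tau^{-1} w) f_1(w) \overline{f_2(w)}$.

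After this substitution the two integrations can be interchanged by Fubini's theorem (justified because $g$ is bounded and $f_1 \overline{f_2} \in L^1(\bD,\mu_\al)$, while $\mu_\bT$ is a finite measure). Carrying out the $\tau$-integral first collapses $\int_\bT g(\tau^{-1} w) \, \dif\mu_\bT(\tau)$ into exactly $\radialization(g)(w)$ by the definition in~\eqref{eq:def_rad} (using invariance of Haar measure under $\tau \mapsto \tau^{-1}$). What remains is $\int_\bD \radialization(g)(w) f_1(w) \overline{f_2(w)} \, \dif\mu_\al(w) = \langle M_{\radialization(g)}^{(\al)} f_1, f_2 \rangle$, which is the right-hand side.

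The main obstacle is not conceptual but bookkeeping: one must carefully justify the interchange of the $\bT$- and $\bD$-integrals and confirm that the rotation-invariance of $\mu_\al$ is applied correctly, so that the change of variables truly leaves the measure fixed. Both points are routine given the boundedness of $g$ and the finiteness of the Haar measure, so I do not expect any genuine difficulty; the identity $\radialization(g)(w) = \int_\bT g(\tau^{-1} w)\,\dif\mu_\bT(\tau)$ follows from~\eqref{eq:def_rad} together with the inversion-invariance of $\mu_\bT$.
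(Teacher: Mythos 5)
Your argument is correct, and it is the standard computation: pass to the sesquilinear form, use unitarity of $\rho^{(\al)}(\tau)$ and the rotation invariance of $\mu_\al$ to shift the rotation onto $g$, then apply Fubini and the inversion invariance of the Haar measure to recognize $\radialization(g)$. The paper states this proposition without proof, treating it as routine, and your write-up supplies exactly the details the authors left implicit, so there is nothing to compare beyond noting that your justification of the Fubini step (boundedness of $g$, $f_1\overline{f_2}\in L^1(\bD,\mu_\al)$, finiteness of $\mu_\bT$) is adequate.
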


Given $a$ in $L^\infty([0,1))$,
we define the numbers $\be_{a,\al,\xi,j,k}$ by
\begin{equation}\label{eq:gamma_as_integral_jac}
\be_{a,\al,\xi,j,k}
\eqdef
\int_0^1 a(\sqrt{t})
\jac_{\min\{j,j+\xi\}}^{(\al,|\xi|)}(t)
\jac_{\min\{k,k+\xi\}}^{(\al,|\xi|)}(t)
\,\dif{}t,
\end{equation}
i.e.,
\begin{equation}\label{eq:gamma_as_integral}
\be_{a,\al,\xi,j,k}
\eqdef
\jaccoef{\al}{|\xi|}{\min\{q+\xi,q\}}
\jaccoef{\al}{|\xi|}{\min\{k+\xi,k\}}
\int_0^1 a(\sqrt{t})
Q_{\min\{q,q+\xi\}}^{(\al,|\xi|)}(t)
Q_{\min\{k,k+\xi\}}^{(\al,|\xi|)}(t)
\,(1-t)^\al\,t^{|\xi|}\,\dif{}t.
\end{equation}

\begin{proposition}\label{prop:radial_multiplication}
Let $a\in L^\infty([0,1))$.
Then $M_{\widetilde{a}}\in\cR^{(\al)}$, and
\begin{equation}\label{eq:radial_mul_operator_on_basis}
\langle M_{\widetilde{a}}\basic{\al}{p}{q},\basic{\al}{j}{k}\rangle
=\langle \widetilde{a} \basic{\al}{p}{q},\basic{\al}{j}{k}\rangle
=\delta_{p-q,j-k} \beta_{a,\al,p-q,q,k}.
\end{equation}
\end{proposition}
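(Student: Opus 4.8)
The plan is to split the statement into two independent parts: the radiality of $M_{\widetilde a}$, and the explicit formula for its matrix entries in the basis $(\basic{\al}{p}{q})$.

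For the first part, I would deduce $M_{\widetilde a}\in\cR^{(\al)}$ from the tools already assembled. Since $a\in L^\infty([0,1))$ the function $\widetilde a$ is bounded, so $M_{\widetilde a}\in\cB(L^2(\bD,\mu_\al))$; and $\widetilde a$ is by construction radial, so Proposition~\ref{prop:radial_criterion} gives $\radialization(\widetilde a)=\widetilde a$ a.e. Then Proposition~\ref{prop:radialization_of_multiplication_operator} yields $\Radialization^{(\al)}(M_{\widetilde a})=M_{\radialization(\widetilde a)}=M_{\widetilde a}$, and membership in $\cR^{(\al)}$ follows from the characterization recorded after the definition of the radialization transform, namely that $S\in\cR^{(\al)}$ if and only if $\Radialization^{(\al)}(S)=S$. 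A one-line direct check that $M_{\widetilde a}$ commutes with every $\rho^{(\al)}(\tau)$, using $\widetilde a(\tau^{-1}z)=\widetilde a(z)$, is an equally good alternative.

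For the second part, the first equality $\langle M_{\widetilde a}\basic{\al}{p}{q},\basic{\al}{j}{k}\rangle=\langle\widetilde a\,\basic{\al}{p}{q},\basic{\al}{j}{k}\rangle$ is merely the definition of the multiplication operator, so the work lies in the last equality. Here I would write the inner product as an integral over $\bD$ and pass to polar coordinates $z=r\tau$, $\tau=\econstant^{\imagunit\theta}$, with $\dif\mu_\al=\frac{\al+1}{\pi}(1-r^2)^\al\,r\,\dif r\,\dif\theta$, substituting the polar form~\eqref{eq:b_via_jac} of both basis functions. The crucial cancellation is that the two factors $(1-r^2)^{-\al/2}$ together with the weight $(1-r^2)^\al$ collapse to $1$, while the two factors $\sqrt{\al+1}$ combine with the $\al+1$ of the measure to $1$; this is precisely why the $\jac$ normalization was introduced. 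The angular variable then separates completely, and the factor $\tau^{(p-q)-(j-k)}$ integrates over $[0,2\pi)$ to $2\pi\,\de_{p-q,j-k}$, producing the Kronecker delta $\de_{p-q,j-k}$.

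Finally, on the diagonal $p-q=j-k=:\xi$, combining the $1/\pi$ from the measure, the $2\pi$ from the angular integral, and the Jacobian $r\,\dif r=\tfrac12\,\dif t$ of the substitution $t=r^2$, all numerical factors cancel and the remaining radial integral becomes $\int_0^1 a(\sqrt t)\,\jac_{\min\{p,q\}}^{(\al,|\xi|)}(t)\,\jac_{\min\{j,k\}}^{(\al,|\xi|)}(t)\,\dif t$. Comparing with the definition~\eqref{eq:gamma_as_integral_jac} of $\be_{a,\al,\xi,j,k}$ finishes the argument. I expect the only genuinely fiddly point to be the index bookkeeping: one must check $\min\{p,q\}=\min\{q,q+\xi\}$ and, using $\xi=j-k$, that $\min\{j,k\}=\min\{k,k+\xi\}$, so that the two $\jac$ factors match those appearing in $\be_{a,\al,p-q,q,k}$. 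As a sanity check, the case $a\equiv1$ must reduce to the orthonormality relation~\eqref{eq:jac_orthonormality}, recovering $\de_{p,j}\de_{q,k}$.
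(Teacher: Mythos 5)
Your proposal is correct and follows essentially the same route as the paper: the paper also establishes radiality of $M_{\widetilde{a}}$ by noting that $\widetilde{a}$ is rotation-invariant so the commutation with $\rho^{(\al)}(\tau)$ follows directly from the definitions, and obtains~\eqref{eq:radial_mul_operator_on_basis} by a direct computation in polar coordinates. Your write-up simply makes explicit the cancellations and the index bookkeeping that the paper leaves to the reader.
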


\begin{proof}
Since $\widetilde{a}$
is invariant under rotations,
it follows directly from definitions
that $M_{\widetilde{a}}^{(\al)}$
commutes with $\rho^{(\al)}(\tau)$ for every $\tau$.
This is a particular case of \cite[Lemma~4.4]{MaximenkoTelleriaRomero2020}.
Formula~\eqref{eq:radial_mul_operator_on_basis}
is obtained directly using polar coordinates.
\end{proof}

\subsection*{\texorpdfstring{Radial Toeplitz operators
in $\boldsymbol{\cA_n^2(\bD,\mu_\al)}$}{Radial Toeplitz operators in An2(D,mualpha)}}

\begin{proposition}\label{prop:radial_Toeplitz_operator}
Let $g\in L^\infty(\bD)$.
Then $T^{(\al)}_{n,g}$ is radial
if and only if the function $g$ is radial.
\end{proposition}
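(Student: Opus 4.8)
The plan is to prove both directions of the equivalence using the radialization transform $\Radialization^{(\al)}$ and its compatibility with the projection $P_n^{(\al)}$. First I would establish the ``if'' direction: suppose $g$ is radial, so $g=\widetilde{a}$ a.e.\ for some $a$ in $L^\infty([0,1))$ by Proposition~\ref{prop:radial_criterion}. By Proposition~\ref{prop:radial_multiplication}, $M_{\widetilde a}$ commutes with every $\rho^{(\al)}(\tau)$. Since $\cA_n^2(\bD,\mu_\al)$ is rotation-invariant (Proposition~\ref{prop:polyBergman_is_rotation_invariant}), the projection $P_n^{(\al)}$ also commutes with each $\rho^{(\al)}(\tau)$. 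Therefore $T^{(\al)}_{n,g}=P_n^{(\al)}M_g$, viewed as an operator on $\cA_n^2(\bD,\mu_\al)$, commutes with the compressions $\rho_n^{(\al)}(\tau)$, which is exactly the statement that $T^{(\al)}_{n,g}\in\cR_n^{(\al)}$.

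For the ``only if'' direction, I would argue contrapositively by extracting information about $g$ from the radiality of the Toeplitz operator. Assume $T^{(\al)}_{n,g}\in\cR_n^{(\al)}$. By Theorem~\ref{thm:radial_polyanalytic_Bergman}, a radial operator acts invariantly on each truncated frequency subspace $\FreqSubspace^{(\al)}_{\xi,\min\{n,n+\xi\}}$; in particular its matrix entries $\langle T^{(\al)}_{n,g}\basic{\al}{p}{q},\basic{\al}{j}{k}\rangle$ vanish whenever $p-q\ne j-k$. Expanding $g$ in the orthonormal basis of $L^2(\bD,\mu_\al)$ and computing these matrix entries via the triple-product integrals of disk polynomials, the off-diagonal vanishing conditions should force all Fourier-in-$\tau$ components of $g$ except the radial one to be annihilated when tested against the relevant finite family of basis functions. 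The cleanest route is to apply the radialization transform: I would show that $T^{(\al)}_{n,g}$ radial forces $T^{(\al)}_{n,g}=\Radialization^{(\al)}(T^{(\al)}_{n,g})$, and that the latter equals $T^{(\al)}_{n,\radialization(g)}$ because $\Radialization^{(\al)}$ passes through the rotation-invariant projection $P_n^{(\al)}$ and acts on $M_g$ as in Proposition~\ref{prop:radialization_of_multiplication_operator}. Then $T^{(\al)}_{n,g}=T^{(\al)}_{n,\radialization(g)}$, and by the injectivity in Proposition~\ref{prop:Toeplitz_injective} we conclude $g=\radialization(g)$ a.e., which by Proposition~\ref{prop:radial_criterion}(c) means $g$ is radial.

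The main obstacle I anticipate is the converse direction, specifically justifying that $\Radialization^{(\al)}$ commutes with compression to $\cA_n^2(\bD,\mu_\al)$ and that $\Radialization^{(\al)}(T^{(\al)}_{n,g})=T^{(\al)}_{n,\radialization(g)}$ as operators \emph{on the polyanalytic space}. This requires care because $\Radialization^{(\al)}$ is defined on $\cB(L^2(\bD,\mu_\al))$ via a weak integral over $\bT$, whereas $T^{(\al)}_{n,g}$ lives on the subspace. The key technical point is that conjugation by $\rho^{(\al)}(\tau)$ sends $T^{(\al)}_{n,g}=P_n^{(\al)}M_gP_n^{(\al)}$ to $P_n^{(\al)}M_{g(\tau^{-1}\,\cdot\,)}P_n^{(\al)}$, using both that $P_n^{(\al)}$ is rotation-invariant and that conjugating a multiplication operator rotates its symbol; averaging over $\tau$ then replaces $g$ by $\radialization(g)$ under the integral. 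I would verify this identity by testing against basis elements $\basic{\al}{p}{q}$ to avoid subtleties of the weak integral, reducing everything to the scalar computations already packaged in Propositions~\ref{prop:radial_multiplication} and~\ref{prop:radialization_of_multiplication_operator}.
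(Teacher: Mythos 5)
Your proposal is correct and follows essentially the same route as the paper: the paper's proof is a one-line citation of Proposition~\ref{prop:Toeplitz_injective} together with Corollaries~4.6 and~4.7 of \cite{MaximenkoTelleriaRomero2020}, which package exactly the two facts you prove by hand, namely that a rotation-invariant symbol yields a radial Toeplitz operator and that $\Radialization^{(\al)}(T^{(\al)}_{n,g})=T^{(\al)}_{n,\radialization(g)}$ (so that radiality of $T^{(\al)}_{n,g}$ plus injectivity of $g\mapsto T^{(\al)}_{n,g}$ forces $g=\radialization(g)$). Your extra care about conjugation by $\rho^{(\al)}(\tau)$ commuting with $P_n^{(\al)}$ is precisely the content of those cited corollaries, so nothing is missing.
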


\begin{proof}
Follows from
Proposition~\ref{prop:Toeplitz_injective}
and \cite[Corollaries~4.6, 4.7]{MaximenkoTelleriaRomero2020}.
\end{proof}

Recall that $\Phi_n^{(\al)}\colon\cR_n^{(\al)}\to\fM_n$ is defined by~\eqref{eq:Phi}.

Given $a$ in $L^\infty([0,1))$,
denote by $\ga^{(\al)}_n(a)$
the sequence of matrices
$[\ga^{(\al)}_n(a)_\xi]_{\xi=-n+1}^\infty$,
where
$\ga^{(\al)}_n(a)_\xi\in\Mat_{\min\{n+\xi,n\}}$
and
\begin{equation}\label{eq:gamma_def}
\ga^{(\al)}_n(a)_\xi
\eqdef
\bigl[\be_{a,\al,\xi,j,k}\bigr]_{j,k=\max\{0,-\xi\}}^{n-1}.
\end{equation}

\begin{proposition}
\label{prop:radial_Toeplitz_in_poly_Bergman_converts_to_gamma}
Let $a\in L^\infty([0,1))$.
Then $T^{(\al)}_{n,\widetilde{a}}\in\cR_n^{(\al)}$
and
$\Phi_n(T^{(\al)}_{n,\widetilde{a}})
=\ga^{(\al)}_n(a)$.
\end{proposition}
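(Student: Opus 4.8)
The plan is to show two things: first that $T^{(\al)}_{n,\widetilde{a}}$ is radial, and second that its matrix representation under $\Phi_n^{(\al)}$ equals $\ga^{(\al)}_n(a)$. The radiality is almost immediate: since $a\in L^\infty([0,1))$, the symbol $\widetilde{a}$ is a radial function by Proposition~\ref{prop:radial_criterion}, so $T^{(\al)}_{n,\widetilde{a}}$ is a radial Toeplitz operator by Proposition~\ref{prop:radial_Toeplitz_operator}. Hence $T^{(\al)}_{n,\widetilde{a}}\in\cR_n^{(\al)}$, which is exactly the hypothesis needed for $\Phi_n^{(\al)}$ to be applicable.

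For the matrix formula, I would compute the entries of $\Phi_n^{(\al)}(T^{(\al)}_{n,\widetilde{a}})$ directly from the definition~\eqref{eq:Phi}. The $(j,k)$-entry of the block indexed by $\xi$ is
\[
\bigl\langle T^{(\al)}_{n,\widetilde{a}}\,\basic{\al}{\xi+k}{k},\,\basic{\al}{\xi+j}{j}\bigr\rangle.
\]
Since $\basic{\al}{\xi+j}{j}\in\cA_n^2(\bD,\mu_\al)$, the projection $P_n^{(\al)}$ in the Toeplitz operator acts as the identity against this basis element, so this inner product equals
\[
\bigl\langle M_{\widetilde{a}}\,\basic{\al}{\xi+k}{k},\,\basic{\al}{\xi+j}{j}\bigr\rangle
=\bigl\langle \widetilde{a}\,\basic{\al}{\xi+k}{k},\,\basic{\al}{\xi+j}{j}\bigr\rangle.
\]
Now I would invoke Proposition~\ref{prop:radial_multiplication}, formula~\eqref{eq:radial_mul_operator_on_basis}, with $p=\xi+k$, $q=k$, $j'=\xi+j$, $k'=j$. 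Because both basis elements lie on the same diagonal $\xi$, the factor $\de_{p-q,j'-k'}$ equals $1$, and the inner product reduces to $\be_{a,\al,\xi,k,j}$. Comparing with the definition~\eqref{eq:gamma_def} of $\ga^{(\al)}_n(a)_\xi$ gives exactly the desired entry.

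The only subtle point is the symmetry of indices in $\be_{a,\al,\xi,j,k}$: the matrix entry I obtain is $\be_{a,\al,\xi,k,j}$, whereas~\eqref{eq:gamma_def} lists $\be_{a,\al,\xi,j,k}$. However, inspecting~\eqref{eq:gamma_as_integral_jac} shows that $\be_{a,\al,\xi,j,k}$ is manifestly symmetric in $j$ and $k$, since it is an integral of a product of two $\jac$ functions that differ only in which index they carry. Thus $\be_{a,\al,\xi,k,j}=\be_{a,\al,\xi,j,k}$ and the two expressions agree. I expect this index-matching — aligning the conventions of~\eqref{eq:Phi}, \eqref{eq:radial_mul_operator_on_basis}, and~\eqref{eq:gamma_def} — to be the main (very minor) obstacle; everything else is a direct substitution once the Toeplitz-to-multiplication reduction is noted.
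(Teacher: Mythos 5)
Your proposal is correct and follows essentially the same route as the paper, whose proof consists precisely of invoking Proposition~\ref{prop:radial_multiplication} and Proposition~\ref{prop:radial_Toeplitz_operator}; you have merely filled in the routine details (moving $P_n^{(\al)}$ across the inner product by self-adjointness, and the symmetry of $\be_{a,\al,\xi,j,k}$ in $j$ and $k$ visible from~\eqref{eq:gamma_as_integral_jac}). No gaps.
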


\begin{proof}
Apply Propositions~\ref{prop:radial_multiplication}
and~\ref{prop:radial_Toeplitz_operator}.
\end{proof}

\subsection*{Radial Toeplitz operators in $\cA_{(n)}^2(\bD,\mu_\al)$}

\begin{proposition}\label{prop:radial_Toeplitz_eigenvalues_true_poly_Bergman}
Let $a\in L^\infty([0,1))$.
Then $T^{(\al)}_{(n),\widetilde{a}}\in\cR_{(n)}^{(\al)}$,
the operator $T^{(\al)}_{(n),\widetilde{a}}$ is diagonal
with respect to the orthonormal basis
$(\basic{\al}{p}{n-1})_{p=0}^\infty$,
and the corresponding eigenvalues
can be computed by
\begin{equation}\label{eq:radial_Toeplitz_eigenvalues_true_poly_Bergman}
\la_{a,\al,n}(p)
=\int_0^1 a(\sqrt{t})\,
\bigl(\jac_{\min\{p,n-1\}}^{(\al,|p-n+1|)}(t)\bigr)^2\,\dif{}t
\qquad(p\in\bNz).
\end{equation}
\end{proposition}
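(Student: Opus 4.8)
The plan is to first establish radiality, then invoke the diagonalization theorem of the previous subsection, and finally reduce the computation of the diagonal entries to the already-known matrix entries of a radial multiplication operator.

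First I would show that $T^{(\al)}_{(n),\widetilde{a}}\in\cR_{(n)}^{(\al)}$. Since $\widetilde{a}$ is radial, Proposition~\ref{prop:radial_multiplication} gives $M_{\widetilde{a}}\in\cR^{(\al)}$, i.e.\ $M_{\widetilde{a}}$ commutes with every $\rho^{(\al)}(\tau)$. The orthogonal projection $P_{(n)}^{(\al)}$ onto $\cA_{(n)}^2(\bD,\mu_\al)$ also commutes with the rotations, because $\cA_{(n)}^2(\bD,\mu_\al)$ is invariant under each unitary $\rho^{(\al)}(\tau)$ and is therefore reducing. Hence $T^{(\al)}_{(n),\widetilde{a}}$, being the compression of $P_{(n)}^{(\al)}M_{\widetilde{a}}$ to $\cA_{(n)}^2(\bD,\mu_\al)$, intertwines $\rho_{(n)}^{(\al)}$, so it is radial; alternatively one may cite the true-polyanalytic analog of Proposition~\ref{prop:radial_Toeplitz_operator}. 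Then by Theorem~\ref{thm:radial_true_polyanalytic_Bergman} every element of $\cR_{(n)}^{(\al)}$ is diagonal with respect to the orthonormal basis $(\basic{\al}{p}{n-1})_{p=0}^\infty$, and in particular so is $T^{(\al)}_{(n),\widetilde{a}}$.

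It remains to compute the eigenvalues $\la_{a,\al,n}(p)=\langle T^{(\al)}_{(n),\widetilde{a}}\basic{\al}{p}{n-1},\basic{\al}{p}{n-1}\rangle$. The key reduction is that the projection may be dropped: since $\basic{\al}{p}{n-1}\in\cA_{(n)}^2(\bD,\mu_\al)$ and $P_{(n)}^{(\al)}$ is self-adjoint,
\begin{equation*}
\langle T^{(\al)}_{(n),\widetilde{a}}\basic{\al}{p}{n-1},\basic{\al}{p}{n-1}\rangle
=\langle M_{\widetilde{a}}\basic{\al}{p}{n-1},P_{(n)}^{(\al)}\basic{\al}{p}{n-1}\rangle
=\langle \widetilde{a}\,\basic{\al}{p}{n-1},\basic{\al}{p}{n-1}\rangle.
\end{equation*}
Now I would apply formula~\eqref{eq:radial_mul_operator_on_basis} with $q=k=n-1$ and $j=p$: the Kronecker delta equals $1$ and the entry becomes $\be_{a,\al,p-n+1,n-1,n-1}$. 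Unwinding the definition~\eqref{eq:gamma_as_integral_jac} with $\xi=p-n+1$, and observing that $\min\{n-1,(n-1)+\xi\}=\min\{p,n-1\}$ while the upper parameter is $|\xi|=|p-n+1|$, both $\jac$-factors coincide and the entry collapses to the square of a single $\jac$-function, giving exactly~\eqref{eq:radial_Toeplitz_eigenvalues_true_poly_Bergman}.

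The computation is entirely routine once the machinery of Sections~\ref{sec:spaces} and \ref{sec:radial} is in place, so I do not anticipate a genuine obstacle. The only point requiring a little care is the index bookkeeping in~\eqref{eq:gamma_as_integral_jac}, namely checking that the substitution $\xi=p-n+1$, $j=k=n-1$ produces the two identical lower indices $\min\{p,n-1\}$ and the common upper parameter $|p-n+1|$, which is what turns the off-diagonal matrix formula into a clean squared eigenvalue.
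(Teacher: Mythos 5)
Your proposal is correct and follows essentially the same route as the paper: establish radiality, invoke Theorem~\ref{thm:radial_true_polyanalytic_Bergman} for diagonality, and reduce the eigenvalue to $\be_{a,\al,p-n+1,n-1,n-1}$ via Proposition~\ref{prop:radial_multiplication}; your index bookkeeping ($j+\xi=p$, so $\min\{j,j+\xi\}=\min\{p,n-1\}$) checks out. If anything, your direct argument that $P_{(n)}^{(\al)}$ commutes with the rotations is slightly more careful than the paper, which simply cites Proposition~\ref{prop:radial_Toeplitz_operator} (stated there for $\cA_n^2$ rather than $\cA_{(n)}^2$).
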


\begin{proof}
From Proposition~\ref{prop:radial_Toeplitz_operator} we get
$T^{(\al)}_{(n),\widetilde{a}}\in\cR_{(n)}^{(\al)}$.
Due to Proposition~\ref{prop:radial_multiplication}
and Theorem~\ref{thm:radial_true_polyanalytic_Bergman},
\[
\la_{a,\al,n}(p)
=(\Phi_{(n)}(T^{(\al)}_{(n),\widetilde{a}}))_p
=\langle T^{(\al)}_{(n),\widetilde{a}}\basic{\al}{p}{n-1},\basic{\al}{p}{n-1}\rangle
=\beta_{a,p-n+1,n-1,n-1}.
\qedhere
\]
\end{proof}

\section*{Acknowledgements}

The research has been supported
by CONACYT (Mexico) project ``Ciencia de Frontera 2019'', N~61517,
by IPN-SIP project 20200650
(Instituto Polit\'{e}cnico Nacional, Mexico),
and by CONACYT scholarships.
Many ideas of this paper were
inspired by talks or joint works with
Nikolai Vasilevski,
Maribel Loaiza Leyva,
Ana Mar\'{i}a Teller\'{i}a Romero,
Isidro Morales Garc\'{i}a,
and Jorge Iv\'{a}n Correo Rosas.

\medskip\noindent
Roberto Mois\'{e}s
Barrera-Castel\'{a}n\newline
Instituto Polit\'{e}cnico Nacional\newline
Escuela Superior de F\'{i}sica y Matem\'{a}ticas\newline
Apartado Postal 07730\newline
Ciudad de M\'{e}xico\newline
Mexico\newline
e-mail: rmoisesbarrera@gmail.com\newline
https://orcid.org/0000-0001-9549-3482

\bigskip\noindent
Egor A. Maximenko\newline
Instituto Polit\'{e}cnico Nacional\newline
Escuela Superior de F\'{i}sica y Matem\'{a}ticas\newline
Apartado Postal 07730\newline
Ciudad de M\'{e}xico\newline
Mexico\newline
e-mail: egormaximenko@gmail.com\newline
https://orcid.org/0000-0002-1497-4338

\bigskip\noindent
Gerardo Ramos-Vazquez\newline
Centro de Investigaci\'{o}n y de Estudios Avanzados
del Instituto Polit\'{e}cnico Nacional\newline
Departamento de Matem\'{a}ticas\newline
Apartado Postal 07360\newline
Ciudad de M\'{e}xico\newline
Mexico\newline
e-mail: ger.ramosv@gmail.com\newline
https://orcid.org/0000-0001-9363-8043

\end{document}